\colorlet{darkblue}{blue!50!black}
\colorlet{darkblue}{blue!50!black}
\let\ltxxlabel\ltx@label
\newtheorem{theorem}{Theorem}[section]
\newtheorem{lemma}[theorem]{Lemma}
\newtheorem{proposition}[theorem]{Proposition}
\newtheorem{definition}[theorem]{Definition}
\newtheorem{remark}[theorem]{Remark}
\newtheorem{hypothesis}[theorem]{Hypothesis}
\newtheorem{condition}[theorem]{Condition}
\let\emptyset\varnothing
\let\originalleft\left
\let\originalright\right
\renewcommand{\left}{\mathopen{}\mathclose\bgroup\originalleft}
\renewcommand{\right}{\aftergroup\egroup\originalright}
\theoremstyle{definition}
\def\vi{\widetilde}
\def\d{\mathrm{d}}
\def\I{\mathrm{I}}
\def\m{\mathrm{m}}
\def\W{\mathrm{W}}
\def\R{\mathbb{R}}
\def\E{\mathbb{E}}
\def\Q{\mathrm{Q}}
\def\H{\mathrm{H}}
\def\e{\varepsilon}
\def\2{\mathcal{E}}
\def\L{\mathrm{L}}
\def\C{\mathrm{C}}
\def\F{\mathrm{F}}
\def\P{\mathbb{P}}
\def\N{\mathbb{N}}
\def\G{\mathrm{G}}
\def\Z{\mathrm{Z}}
\def\PP{\mathscr{P}}
\def\UU{\mathfrak{U}}
\def\SS{\mathbb{S}}
\def\p{\mathfrak{p}}
\def\c{\mathfrak{c}}
\def\EE{\mathcal{E}}
\newcommand{\Addresses}{{
		\footnote{
			\noindent 	\textsuperscript{1}Theoretical Statistics and Mathematics Unit, Indian Statistical Institute Banglore Centre-ISI Banglore, 8th Mile Mysore Road, Bangalore, Karnataka 560059, India\\
			\textsuperscript{2}Department of Mathematics,
			Montanuniversitaet Leoben,
			Austria\\
			\textsuperscript{2,3}Department of Mathematics, Indian Institute of Technology Roorkee-IIT Roorkee,
			Haridwar Highway, Roorkee, Uttarakhand 247667, India.
			\par\nopagebreak
			\noindent  \textit{e-mail:} \texttt{Manil T. Mohan: maniltmohan@ma.iitr.ac.in, maniltmohan@gmail.com.}
			
			\textit{e-mail:} \texttt{Ankit Kumar: akumar14@mt.iitr.ac.in,ankit.kumar@unileoben.ac.at.}
			
			\textit{e-mail:} \texttt{Vivek Kumar: vivekkumar\_ra@isibang.ac.in, vivekmsc118@gmail.com.}
			
			\noindent \textsuperscript{*}Corresponding author.
			
			\textit{Keywords:} Stochastic Burgers-Huxley equations, mild solution, central limit theorem, moderate deviations.

			Mathematics Subject Classification (2020): Primary 60H15, 60F10; Secondary 37L55, 60H20.

}}}
\begin{document}	
	
	\title[ Asymptotic behavior of the SGBH equation]{ Central limit theorem  and moderate deviation principle  for the stochastic generalized  Burgers-Huxley equation with multiplicative noise
		\Addresses}

	\author[V. Kumar, A. Kumar and M. T. Mohan]
	{Vivek Kumar\textsuperscript{1}, Ankit Kumar\textsuperscript{2} and Manil T. Mohan\textsuperscript{3*}}

	\maketitle

	\begin{abstract}
		In this work, we investigate the Central Limit Theorem (CLT) and Moderate Deviation Principle (MDP) for the stochastic generalized  Burgers-Huxley (SGBH) equation  with multiplicative Gaussian  noise. The SGBH equation is a diffusion-convection-reaction type equation which consists a nonlinearity of polynomial order, and we take into account of an infinite-dimensional noise having a coefficient that has linear growth. We first  prove the CLT which allows us to establish the convergence of the distribution of the solution to a re-scaled SGBH equation to a desired distribution function. Furthermore, we extend our asymptotic analysis by  investigating the MDP for the SGBH equation.  Using the weak convergence method, we establish the MDP and derive the corresponding rate function.
	\end{abstract}

	\section{Introduction}\label{Intro}\setcounter{equation}{0}
	The study of deviations in stochastic processes plays a crucial role in understanding the behavior of random phenomena. Deviation analysis allows us to quantify the likelihood of observing outcomes that significantly deviate from the expected or typical behavior. Large Deviation Principle (LDP), Central Limit Theorem (CLT), and Moderate Deviation Principle (MDP) are three important frameworks that provide insights into different scales of deviations in stochastic processes, see \cite{BBM,QCHG, DV85,MIFADW,VJE,HLW,MTMCLT,MTM2020,V66,V84,XZ,ZZG}, etc., and references therein. The theory of  large deviation is a mathematical tool that helps us to understand and measure rare events or extreme changes in random processes. Traditional probability theory focuses on how random quantities  behave around their average values, but large deviation theory looks at the less likely outcomes. It gives us a better understanding of the probabilities of unusual events. CLT relates to the behavior of sums or averages of many independent random variables, focusing on a typical behavior with small deviations. MDP bridges the gap between LDP and CLT by analyzing moderate deviations from the mean behavior of stochastic processes. It provides a mathematical framework to understand the probabilities associated with events that are not extremely rare but still deviate significantly from the expected behavior.

	Mathematically, let us examine a sequence of stochastic processes denoted as $\{u_\e(t): t>0\}_{\e\geq 0}$, and let  $u_0(\cdot)$ represents the limit of $u_\e(\cdot)$, as $\e\to0$. Our focus is on understanding the deviation of $u_\e(\cdot)$ from the deterministic behavior of $u_0(\cdot)$, as $\e\to0$. That means, we  study the asymptotic behavior of  the trajectory
	\[\mathrm{Z}_\e(t):=\frac{u_\e(t)-u_0(t)}{\sqrt{\e}\lambda(\e)},\]
	where $\lambda(\e)$ is some deviation scale. In particular, it influences the asymptotic behavior of $u_\e(\cdot)$ in the following way:
	\begin{enumerate}
		\item  for $\lambda(\e)=\frac{1}{\sqrt{\e}},$ we get the LDP;
		\item for $\lambda(\e)=1$, it gives the CLT;
		\item  for $\lambda(\e)\rightarrow \infty$ and $\sqrt{\e}\lambda(\e)\rightarrow 0$, as $\e \rightarrow 0$ (for example $\lambda(\e)=\e^{-\theta}$ for some $0<\theta<\frac{1}{2}$), it provides the MDP.
	\end{enumerate}
	In this work, we are interested to establish the above mentioned stochastic deviation properties for the solutions to the following generalized version of Burgers-Huxley equation driven by a random forcing indexed by $\e\in(0,1]$:
	\begin{align}\label{1.1}
		\frac{\partial u_\e(t,x)}{\partial t}&=\nu\frac{\partial^2 u_\e(t,x)}{\partial x^2}- \alpha u_\e^\delta(t,x) \frac{\partial u_\e(t,x)}{\partial x}
		+\beta u_\e(t,x)(1-u_\e^\delta(t,x))(u_\e^\delta(t,x)-\gamma)\nonumber\\
		&\quad\qquad+ \sqrt{\e}g(t, x, u_\e(t,x))\mathcal{W}(t,x),~~~~~t>0,~ x\in (0,1),
	\end{align}where 
	\begin{itemize}
		\item   $u^{\e}(t)=u^{\e}(t,x)$ signifies the concentrations or density of a specific thing of interest, such as the population density of a chemical species or a biological environment;
		\item $\delta\in\N$ is a parameter which represents the degree of nonlinearity within the equation, subsequently controlling the magnitude of the response or feedback term in the equation;
		\item $\alpha>0$ denotes the coefficient associated with the nonlinear advection term, capturing the energy of a transport phenomena or  a nonlinear wave propagation. It is known as the advection coefficient;
		\item $\nu$ denotes the coefficient of diffusion term which helps us to understand  the spatial dispersion or spreading of the quantity $u$. It is known as the viscosity coefficient;
		\item $\beta>0$ denotes the parameter associated to the reaction or feedback term, controlling the nonlinear reaction's or growth process's intensity;
		\item $\gamma\in(0,1)$ is another parameter associated with the reaction or feedback term, impacting the behavior of the system at different concentration levels;
		\item $\e\in(0,1]$ is the intensity of the noise.
	\end{itemize}
	The problem \eqref{1.1} is associated  with the Dirichlet boundary condition and the deterministic  initial data 
	\begin{align}\label{1.2}
		u_\e(t,0)=u_\e(t,1)=0,\ \ t\geq0, \ \ \text{ and } \ \ u_\e(0,x)=u^0(x),
	\end{align}respectively. The noise coefficient $g(\cdot,\cdot,\cdot)$ has linear growth and it satisfies a Lipschitz condition  in the third variable (see Hypothesis \ref{hyp1} below).

	The problem \eqref{1.1} without random forcing  (that is, $\e=0$), with $\beta=0$ and $\delta=1$, is the \textsl{classical viscous Burgers equation} which has several real life applications, see \cite{BJM}. Further, when $\e=0, \beta\neq 0$ and $\delta$ is set to be $1$, then the reduced equation is known as \textsl{Burgers-Huxley equation}, for instance see \cite{VJE,MTMAK}, etc., and references therein. This equation finds widespread application in the analysis of various physical and biological systems, such as fluid dynamics, neural networks, and pattern formation. For $\beta=0$ and $\alpha=1$, the equation \eqref{1.1} forms the following generalized stochastic Burgers equation:
	\begin{align}\label{Burgers}
		\frac{\partial u_\e(t,x)}{\partial t}=\nu\frac{\partial^2 u_\e(t,x)}{\partial x^2}-\alpha u_\e^\delta(t,x)\frac{\partial u_\e(t,x)}{\partial x}+\sqrt{\e}g(t,x,u_\e(t,x))\mathcal{W}(t,x),
	\end{align} 
	which is widely known and extensively studied as a simplified model for understanding fluid turbulence. It has received significant attention in the literature, with numerous studies and publications dedicated to its exploration. Notable references include \cite{DDR, GDDG, IG, IGDN, VMG} and references therein, which provide further insights and analysis on this topic.   If we choose  $\alpha=0$, then the equation \eqref{1.1} reduces to the following equation:
	\begin{align}\label{Huxley}\nonumber
		\frac{\partial u_\e(t,x)}{\partial t}&=\nu\frac{\partial^2 u_\e(t,x)}{\partial x^2}+\beta u_\e(t,x)(t-u_\e^\delta(t,x))(u_\e^\delta(t,x)-\gamma)\\&\quad +\sqrt{\e}g(t,x,u_\e(t,x))\mathcal{W}(t,x), \ \ \text{ for } \ \ (t,x)\in(0,T)\times(0,1).
	\end{align}
	The equation \eqref{Huxley} is known as the \textsl{stochastic Huxley equation}. It serves as a mathematical representation for the transmission of nerve impulses in nerve fibers and the movement of liquid crystals in walls (for more details, see \cite{XW}). 
	The SGBH equation \eqref{1.1} covers both the models given by \eqref{Burgers} and \eqref{Huxley}. The deterministic generalized Burgers-Huxley (GBH) equation has been extensively studied in \cite{VJE, MTMAK,SLRRSMTM}, and  references therein, along with numerical investigations.  Regarding the global solvability results of its stochastic counterpart, one can find comprehensive information in the works \cite{HGXLJM1,HGXLJM2,AKMTM3, AKMTM2, MTMSBH, MTMSGBH}.

	\subsection{A short note on the noise $\mathcal{W}(\cdot,\cdot)$}\label{SN}
	Let us now characterize the noise $\mathcal{W}(\cdot,\cdot)$ 
	appearing in \eqref{1.1} on the probability space $\big(\Omega,\mathscr{F},\{\mathscr{F}_t\}_{t\geq0},\P\big)$ (see \cite{SVLBLR}). Consider a sequence  $\{\xi_j\}_{j\geq 1 }$ of independent standard Gaussian random variables on this basis (see \cite[Lemma II.2.3]{NVK}). If $\{\m_j(t)\}_{j\in\N}$ is an  orthonormal basis in $\mathrm{L}^2([0,T]), $ then
	
	\[\beta(t):=\sum_{j\in\N} \bigg(\int_{0}^{t}\m_j(s)ds\bigg) \xi_j\]
	is a standard Brownian motion with covariance $\E[\beta(t)\beta(s)]=(t\wedge s)=\min\{t,s\}$. 
	Consider a sequence $\{\beta_j\}_{j\in\N}$ of independent standard Brownian motions and  an orthonormal basis $\{\varphi_j\}_{j\in \N}$ in the space $\L^2([0,1])$. Let us define
	\[h_j(x):=\int_{0}^{x} \varphi_j(y)dy.\] 
	Then the process
	$$\W(t,x):=\sum_{j\in\N}h_j(x)\beta_j(t)$$
	is Gaussian  with the mean zero and covariance $\E[\W(t,x)\W(s,y)]=(t\wedge s)(x\wedge y).$ The process $\W$ is known as the \textsl{Brownian sheet.} The formal term-by-term differentiation of the series in $\beta$ suggests a representation
	\begin{align*}
		\dot{\beta}(t)=\sum_{k\in\N}\m_k(t)\xi_k,
	\end{align*}
	so that 
	\begin{align}
		\frac{\partial^2\W}{\partial t\partial x}(t,x)=\dot{\W}(t,x)=\sum_{j\in\N}\varphi_j(x)\dot{\beta}_j(t), 
	\end{align}
	and we call the process $\dot{\W},$ the \textsl{(Gaussian) space-time white noise.} Even though the series  diverges, it defines a random generalized function on $\L^2([0,T]\times[0,1])$:
	\begin{align*}
		\dot{\W}(f)=\sum_{j\in\N}\int_0^T\bigg(\int_0^1f(t,x)\varphi_j(x)\d x\bigg)\d \beta_j(t)=\int_0^T\int_0^1f(t,x)\W(\d t,\d x). 
	\end{align*}
	
	To construct a noise, that is, white in time and colored in space, take a sequence of non-negative numbers $\{q_j\}_{j\in\N}$ and define 
	\begin{align}\label{15}
		\frac{\partial^2\W^{\Q}}{\partial t\partial x}(t,x)=\dot{\W}^{\Q}(t,x)=\mathcal{W}(t,x)=\sum_{j\in\N}q_j\varphi_j(x)\dot{\beta}_j(t),
	\end{align}	
	and we call the process $\dot{\W}^\Q,$ the \textsl{(Gaussian) white in time and colored in space noise.} Whenever we consider $\dot{\W}^\Q$ in the work, we assume that \begin{align}\label{181}\sum\limits_{j\in\N}q_j^2<\infty.\end{align} 
	Let  the family  $\{\varphi_j\}_{j\in\N}$ be the orthonormal basis of $\L^2([0,1])$ consisting of the eigenfunctions of the operator $-\frac{\partial^2}{\partial x^2}$ corresponding to the eigenvalues $\{\lambda_j\}_{j\in\N}$. In fact, for $j=1,2,\ldots$
	\begin{equation*}
		\lambda_j=j^2 \pi^2, \ 	\varphi_j(x)=\sqrt{2}\sin(j\pi x).\
	\end{equation*} Then an example of a family $\{q_j\}_{j\in\N}$ satisfying \eqref{181}  is $\{\lambda_j^{-\eta}\}_{j\in\N}$ for $\eta>\frac{1}{4}$, since $\sum\limits_{j\in\N}\frac{1}{j^{4\eta}}<\infty$ provided $\eta>\frac{1}{4}$.

	\subsection{ Objectives, literature survey, novelties and methodology}
	In a recent work \cite{KKM}, the authors established a uniform large deviation principles (ULDP) for the SGBH equation \eqref{1.1}. The LDP and ULDP were obtained using the weak convergence approach. Building upon the findings of \cite{KKM}, this current study aims to derive the CLT and MDP for the system \eqref{1.1}-\eqref{1.2}. Thus, this work serves as a continuation and extension of the results presented in \cite{KKM}.
	
	There are extensive results are available in the literature for the CLT and MDP for stochastic partial differential equations (SPDEs). For notable contributions in this field, we refer the readers to \cite{BBM, HLW, MTMCLT,MTM2020, WZZ, XZ, ZZG}, etc., and references therein. In the context of our current investigation, we  focus primarily on the recent studies \cite{BBM, HLW,XZ} that are relevant to our specific research objectives.
	
	In the work \cite{BBM}, the authors  established the MDP and CLT for stochastic Burgers equation. Similarly, the studies conducted in \cite{HLW, XZ} developed the CLT and MDP for a class of semilinear SPDEs. It is worth noting that in all three of these works \cite{BBM, HLW, XZ}, the noise coefficient $g(t,x,r)$ was assumed to be bounded by a constant and exhibit Lipschitz continuity with respect to the third variable $r$. They have considered space-time white noise with such type of coefficient $g$ and having  atmost second-order nonlinear advection-type of function in their respective SPDE models.
	
	In contrast, our present study focuses on SPDEs that incorporate  advection-term with a polynomial order and an additional reaction term characterized by polynomial order nonlinearity. Further,  we consider a noise process that is white in time and colored in space, with the noise coefficient $g(t,x,r)$ characterized by linear growth and Lipschitz continuity with respect to the third variable $r$.    
	
	The presence of a polynomial type of nonlinearity and the infinite-dimensionality of noise with an unbounded noise coefficient are some significant challenges to achieve our aims. The polynomial type nonlinearites are handled by using Green's function estimates  (see Appendix \ref{SUR}). 
	We overcame the difficulty of  a proper  Burkholder-Davis-Gundy (BDG) inequality in $\L^p$-spaces by using the results from \cite{MCV,IY}. Recently, the authors in \cite{MCV,IY} obtained BDG inequality in UMD Banach spaces,  which is quite helpful in handling various maximal inequalities within our analysis. Precisely, the authors in \cite{MCV} showed that if $\mathrm{M}$ is an $\L^p$-bounded martingale, $1<p<\infty$, with $\mathrm{M}_0=0$, that takes values in a UMD Banach space $(\mathrm{X},\|\cdot\|_{\mathrm{X}})$ over a measure space $(S,\Sigma,\mu)$, then for all $t\geq 0$, 
	\begin{align}\label{18}
		\E\bigg[\sup_{s\in[0,t]}\|\mathrm{M}_s(\sigma)\|_{\mathrm{X}}^p\bigg]\leq C_{p,\mathrm{X}}\mathbb{E}\bigg[\|[\mathrm{M}(\sigma)]_t^{\frac{1}{2}}\|_{\mathrm{X}}^p\bigg],
	\end{align}
	where the quadratic variation $[\mathrm{M}(\sigma)]_t$ is considered pointwise in $\sigma\in S$. As $\L^p([0,1]),$ $1<p<\infty$ is a UMD Banach space, we can use the above BDG inequality in the sequel.

	The proof of the MDP is similar to the proof of the {LDP} and is adapted from the works \cite{ABPFD,ADPDVM,MIFADW}. In \cite{MIFADW}, the authors developed a variant of the ULDP known as the Freidlin-Wentzell uniform large deviation principle (FWULDP) in their honor. They demonstrated the FWULDP for various families of stochastic dynamical systems in infinite dimensions by employing specific variational representations. {Additionally, in the work \cite{ADPDVM}, the authors developed a weak convergence method that utilizes the uniform Laplace principle to prove the LDP, where the uniformity is considered over compact subsets of some appropriate Polish space}. They also provided a sufficient condition for obtaining the ULDP for the distribution of solutions to certain SPDEs, which requires examining essential qualitative properties such as existence, uniqueness, and tightness of controlled analogues of the original SPDEs. For our problem, we primarily followed the techniques developed in \cite{ADPDVM} to prove an MDP for our model. We point out here that the stopping time arguments and \cite[Lemma 4.1]{ICAM2} play a crucial role  in the analysis presented in this paper.

	\subsection{Organization} The structure of this work is as follows: In Section \ref{Sec2}, we provides the mathematical formulations of the problem along with our primary assumption as well as the well-posedness results for the system \eqref{1.1}-\eqref{1.2}. In Section \ref{Sec3}, we establish a CLT, while in Section \ref{Sec4}, our focus is on establishing the MD for the current problem. The article concludes by referencing Appendix \ref{SUR}, where we recall some relevant results from \cite{IG,IGCR}. 
	
	\section{Assumptions and solvability results}\label{Sec2}\setcounter{equation}{0}
	We first discuss the function spaces used in this work and then provide the assumptions on the noise coefficient $g(\cdot,\cdot,\cdot)$. Finally, we provide the existence and uniqueness of solutions of the problem \eqref{1.1}-\eqref{1.2}.  
	
	\subsection{Function spaces} Let us denote the space of all infinite times differentiable functions having compact support in $[0,1]$ by $\C_0^\infty([0,1])$. The Lebesgue spaces are denoted by $\L^p([0,1])$, for $p\geq1$, and the norm in the space $\L^p([0,1])$ is denoted by $\|\cdot\|_{\L^p}$. For $p=2$, the inner product in the space $\L^2([0,1])$ is represented by $(\cdot,\cdot)$. Let us represent the Hilbertian Sobolev spaces by $\H^k([0,1])$, for $k\in\N$ and $\H_0^1([0,1])$ represent the closure of $\C_0^\infty([0,1])$ in $\H^1$-norm.  Since we are working in a bounded domain, by the Poincar\'e inequality ($\lambda_1\|u\|_{\L^2}^2\leq \|\partial_xu\|_{\L^2}^2$), we infer that the norm $(\|\cdot\|_{\L^2}^2+\|\partial_x\cdot\|_{\L^2}^2)^\frac{1}{2}$ is equivalent to the seminorm $\|\partial_x\cdot\|_{\L^2}$ and hence $\|\partial_x\cdot\|_{\L^2}$ defines a norm in $\H_0^1([0,1])$. Moreover, we obtain the Gelfand triplet $\H_0^1([0,1])\hookrightarrow \L^2([0,1])\hookrightarrow\H^{-1}([0,1])$, where $\H^{-1}([0,1])$ stands for the dual of  $\H_0^1([0,1])$. For the bounded domain $[0,1]$, one has the compact embedding $\H_0^1([0,1])\hookrightarrow\L^2([0,1])$. The duality paring between $\H_0^1([0,1])$ and its dual $\H^{-1}([0,1])$ as well as between $\L^p([0,1])$ and its dual $\L^{\frac{p}{p-1}}([0,1])$ is denoted by $\langle\cdot,\cdot\rangle$. In one dimension, the embedding $\H^s([0,1])\hookrightarrow\L^p([0,1])$ is compact for any $s>\frac{1}{2}-\frac{1}{p}$, for $p>2$. 
	\subsection{Hypotheses  and mild solution} Let us introduce the linear growth and Lipschitz conditions on the noise coefficient.
	\begin{hypothesis}\label{hyp1}
		The function $g:[0,T]\times[0,1]\times\R\to\R$ is a measurable function, satisfying the following conditions: for all $t\in[0,T],\ x\in[0,1]$ and $r,s\in\mathbb{R}$,
		\begin{align}\label{2.1}
			|g(t,x,r)|\leq K(1+|r|),\ \ \text{ and } \ \ |g(t,x,r)-g(t,x,s)|\leq L|r-s|,			
		\end{align} where $K$ and $L$ are some positive constants.
	\end{hypothesis}

	The \textsl{fundamental solution} $G(t,x,y)$ of the heat equation in the interval $[0,1]$ with the Dirichlet boundary conditions is given by 
	\begin{align}\label{19}
		G(t,x,y)=\frac{1}{\sqrt{4\pi t}}\sum_{m=-\infty}^{\infty}\bigg[e^{-\frac{(y-x-2m)^2}{4t}}-e^{-\frac{(y+x-2m)^2}{4t}}\bigg],
	\end{align}
	for all $t\in(0,T]$ and $x,y\in[0,1]$. 
	
	Next, we provide the definition of \textsl{mild solution} in the sense of Walsh (see \cite{JBW}) to the problem \eqref{1.1}-\eqref{1.2}. 
	\begin{definition}[Mild solution]\label{def1}
		An $\L^p([0,1])$-valued and $\mathscr{F}_t$-adapted stochastic process $u_\e:[0,\infty)\times[0,1]\times\Omega\to\R$ with $\P$-a.s.~continuous trajectories on $t\in[0,T]$, is called a \emph{mild solution} to the problem \eqref{1.1}-\eqref{1.2}, if for any $T>0$, the following integral equation is satisfied: for all $t\in[0,T]$, $\P$-a.s.,
		\begin{align}\label{mild1}\nonumber
			u_\e(t,x)&=\int_0^1G(t,x,y)u^0(y)\d y+\frac{\alpha}{\delta+1}\int_0^t\int_0^1\frac{\partial G}{\partial y}(t-s,x,y)\p(u_\e(s,y))\d y\d s\\&\nonumber\quad +\beta\int_0^t\int_0^1G(t-s,x,y)\c(u_\e(s,y))\d y\d s\\&\quad+ \sqrt{\e}
			\int_0^t\int_0^1G(t-s,x,y)g(s,y,u_\e(s,y))\W^\Q(\d s,\d y), 
		\end{align} where $\p(u)=u^{\delta+1}, \ \c(u)=u(1-u^\delta)(u^\delta-\gamma)$ and $G(\cdot,\cdot,\cdot)$ is the fundamental solution of the heat equation in $[0,1]$ with the Dirichlet boundary conditions.
	\end{definition}
	Assume that the initial data $u^0\in\L^p([0,1])$, $p\geq 2$.  From \cite[Proposition 3.7]{IGCR}, we know that $u_\e$ is a mild solution to the system \eqref{1.1}-\eqref{1.2} in the sense of Definition \ref{def1}, if and only if the $\L^p([0,1])$-valued $\mathscr{F}_t$-adapted locally bounded stochastic process $u_\e(\cdot)$ has a continuous modification satisfying the following integral equation:
	\begin{align}\label{WF}\nonumber
		\int_0^1u_\e(t,y)\phi(y)\d y& =\int_0^1u^0(y)\phi(y)\d y +\nu\int_0^t\int_0^1u_\e(s,y)\phi''(y)\d y\d s\\&\nonumber\quad+\frac{\alpha}{\delta+1}\int_0^t\int_0^1 \p(u_\e(s,y))\phi'(y)\d y\d s+\beta \int_0^t\int_0^1 \c(u_\e(s,y))\phi(y)\d y\d s\\&\quad +\sqrt{\e}\int_0^t\int_0^1 g(s,y,u_\e(s,y))\phi(y)\W^\Q(\d s,\d y), 
	\end{align}
	for every test function $\phi\in\C^2([0,1]), \ \phi(x)=0, \ x\in \{0,1\}$ and all $t\in[0,T]$.

	If we take  $\e=0$ in the problem \eqref{1.1}-\eqref{1.2}, then we obtain the following deterministic problem:
	\begin{align}\label{det1.1}
		\frac{\partial u_0(t,x)}{\partial t}&=\nu\frac{\partial^2 u_0(t,x)}{\partial x^2}- \alpha u_0^\delta(t,x) \frac{\partial u_0(t,x)}{\partial x}
		+\beta u_0(t,x)(1-u_0^\delta(t,x))(u_0^\delta(t,x)-\gamma),~~~~~
	\end{align}
	for $t>0,~ x\in (0,1),$ and 	\begin{align}\label{det1.2}
		u_0(t,0)=u_0(t,1)=0,\ \ t\geq0, \ \ \text{ and } \ \ u_0(0,x)=u^0(x),
	\end{align}
	which is well-posed (see \cite[Proposition 3.4]{AKMTM3} or \cite[Theorem 2.10]{KKM}) and whose mild solution is given by 
	\begin{align}\label{1.4}
		u_0(t)&=\int_0^1G(t,x,y)u^0(y)\d y+\frac{\alpha}{\delta+1}\int_0^t\int_0^1\frac{\partial G}{\partial y}(t-s,x,y)\p(u_0(s,y))\d y\d s\\&\nonumber\quad +\beta\int_0^t\int_0^1G(t-s,x,y)\c(u_0(s,y))\d y\d s, \ \text{ for all } \ t\in[0,T].
	\end{align}
	Let us recall the following well-posedness results for the integral equations \eqref{WF} and \eqref{det1.1}:
	\begin{theorem}[{\cite[Theorem 2.10]{KKM}}]\label{thrmE1}
		Assume that Hypothesis \ref{hyp1} holds and the initial data $u^0\in\L^p([0,1])$, for $p>\max\{6,2\delta+1\}$. Then, there exists a unique $\L^p([0,1])$-valued $\mathscr{F}_t$-adapted continuous process $u_\e(\cdot)$    satisfying the integral equation \eqref{mild1} such that 
		\begin{align}\label{E30}
			\sup_{\e\in(0,1]}	\E\bigg[\sup_{t\in[0,T]}\|u_\e(t)\|_{\L^p}^p\bigg]\leq C<\infty.
		\end{align}
	\end{theorem}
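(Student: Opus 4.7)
The plan is a standard truncation-plus-fixed-point construction followed by a uniform-in-$\e$ energy estimate. Since the nonlinearities $\p(u)=u^{\delta+1}$ and $\c(u)=u(1-u^\delta)(u^\delta-\gamma)$ are polynomial and hence not globally Lipschitz on $\L^p([0,1])$, I would first introduce smooth cut-offs $\theta_R\colon\R\to[0,1]$ with $\theta_R\equiv 1$ on $[0,R]$ and $\theta_R\equiv 0$ on $[R+1,\infty)$, and replace $\p(u_\e(s,\cdot))$ and $\c(u_\e(s,\cdot))$ in \eqref{mild1} by $\theta_R(\|u_\e(s)\|_{\L^p})\p(u_\e(s,\cdot))$ and $\theta_R(\|u_\e(s)\|_{\L^p})\c(u_\e(s,\cdot))$ respectively, so that both drift integrands become globally Lipschitz as maps from $\C([0,T];\L^p([0,1]))$ to $\L^p([0,1])$.

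For each fixed $R$ and $\e$, I would solve the truncated mild equation by a contraction argument for the corresponding Picard operator on the Banach space $\mathcal{X}_T:=\L^p(\Omega;\C([0,T];\L^p([0,1])))$, if necessary equipped with an exponentially weighted time norm to absorb the $R$-dependent Lipschitz constant. The three estimates I would invoke are: (i) $\L^p$-boundedness of the heat kernel convolution, $\left\|\int_0^1 G(t,\cdot,y)f(y)\d y\right\|_{\L^p}\leq\|f\|_{\L^p}$, and (ii) the integrable singularity $\left\|\int_0^1\partial_y G(t,\cdot,y)f(y)\d y\right\|_{\L^p}\leq C t^{-1/2}\|f\|_{\L^p}$ coming from the spatial derivative of the Gaussian kernel, both collected in Appendix \ref{SUR}; and (iii) the $\L^p$-BDG inequality \eqref{18} in the UMD space $\L^p([0,1])$, which together with Hypothesis \ref{hyp1} and the summability \eqref{181} controls the stochastic convolution. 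This yields a unique truncated solution $u_{\e,R}$ in $\mathcal{X}_T$.

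Stopping times $\tau_R^\e:=\inf\{t\in[0,T]:\|u_{\e,R}(t)\|_{\L^p}\geq R\}\wedge T$ and pathwise uniqueness let me paste the $u_{\e,R}$ consistently into a maximal local solution $u_\e$. The heart of the argument is the uniform moment bound \eqref{E30}, which simultaneously forces $\tau_R^\e\nearrow T$ $\P$-almost surely as $R\to\infty$. I would derive it in the equivalent weak formulation \eqref{WF}: applying the It\^o formula for $\|\cdot\|_{\L^p}^p$ to the stopped process $u_\e(\cdot\wedge\tau_R^\e)$ produces, from the reaction term $\c(u)$, the dissipative contribution $-\beta(1+\gamma)\|u_\e\|_{\L^{p+2\delta}}^{p+2\delta}$; via Young's inequality this absorbs both the convection contribution $u^{\delta+1}\partial_x u$ (after integration by parts using the Dirichlet boundary conditions) and the positive lower-order reaction terms, while the BDG inequality and the linear growth from Hypothesis \ref{hyp1} control the stochastic increment with constants independent of $\e\in(0,1]$, closing a Gronwall estimate.

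The main obstacle is precisely this uniform-in-$\e$ a priori bound: the polynomial nonlinearities of degrees $\delta+1$ and $2\delta+1$ cannot be closed without the dissipation from the reaction's highest-order term, and handling them in the mild setting requires a careful balancing of the singular $t^{-1/2}$ kernel of $\partial_y G$, the $\L^p$ BDG constant, and the Young's inequality absorption. The exponent restriction $p>\max\{6,2\delta+1\}$ is exactly what ensures enough integrability for this balancing to succeed. Uniqueness within the class of $\L^p$-valued continuous adapted processes then follows by applying the same estimates to the difference of two candidate solutions.
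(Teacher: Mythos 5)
The paper does not prove this theorem itself; it is imported verbatim from \cite[Theorem 2.10]{KKM}, so I can only evaluate your proposal on its own merits and against the method visible in the rest of the present paper (mild formulation, Green's function bounds, BDG in UMD spaces, stopping-time arguments). Your overall architecture --- truncate the polynomial nonlinearities, solve the truncated mild equation by a Picard contraction in a weighted $\L^p(\Omega;\C([0,T];\L^p))$ norm using \eqref{A1}, \eqref{A2}, \eqref{A7} and the $\L^p$-BDG estimate \eqref{18}, then pass to the untruncated equation via stopping times and an a priori energy bound --- is the right strategy and is in line with the Gy\"ongy--Rovira $\L^p$-theory (\cite{IG,IGCR}) on which this paper is built. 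The observation that the uniform-in-$\e$ moment bound \emph{cannot} be closed in the mild formulation (the polynomial terms give superlinear Gronwall inequalities) and must instead exploit the coercivity of $\c$ in the variational formulation is exactly the crucial structural point.

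That said, a few details need correction and one genuine gap needs filling. First, expanding $\c(u)=-\gamma u+(1+\gamma)u^{\delta+1}-u^{2\delta+1}$ shows the coefficient of the leading dissipative term is $-1$, not $-(1+\gamma)$; the dissipation you extract from the $\L^p$-energy identity is $-\beta\|u_\e\|_{\L^{p+2\delta}}^{p+2\delta}$, and the $(1+\gamma)u^{\delta+1}$ piece is a lower-order \emph{growth} term that Young's inequality absorbs into this dissipation together with $-\gamma u$ (which is itself already benign). Second, the convection contribution does not need to be absorbed by the dissipation at all: writing $u^{\delta+1}|u|^{p-2}\partial_x u = \partial_x F(u)$ with $F(r)=\int_0^r s^{\delta+1}|s|^{p-2}\,\d s$, the integral over $[0,1]$ vanishes identically under the Dirichlet boundary conditions. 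Phrasing it as ``absorbed'' misrepresents the mechanism. The actual gap is the application of the It\^o formula for $\|\cdot\|_{\L^p}^p$: the map $u\mapsto\|u\|_{\L^p}^p$ is not twice Fr\'echet differentiable on $\L^p([0,1])$, the mild solution is a priori only $\L^p$-valued, and the weak formulation \eqref{WF} involves a fixed $\C^2$ test function, not $u|u|^{p-2}$. You would need to invoke a Krylov-type $\L_p$-It\^o formula (after first upgrading regularity so that $u_\e(t)\in W^{1,p}$ for a.e.\ $t$), or pass through a Galerkin/mollification approximation and justify the limit. This step is standard but non-trivial; as written it is asserted rather than argued, and it is precisely the step that upgrades the conditional bound (depending on the truncation level $M$, as in the paper's Theorem \ref{thrm4.6}) to the unconditional bound $\sup_{\e\in(0,1]}\E\big[\sup_{t\in[0,T]}\|u_\e(t)\|_{\L^p}^p\big]\leq C$ claimed in \eqref{E30}.
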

	\begin{theorem}[{\cite[Proposition 3.5]{AKMTM3}}]\label{thrmE2}
		Assume the initial data $u^0\in\L^p([0,1])$, for $p\geq 2\delta+1$. Then, there exists a unique $\L^p([0,1])$-valued continuous function $u_0(\cdot)$    satisfying the integral equation \eqref{1.4} such that 
		\begin{align}\label{E31}
			\sup_{t\in[0,T]}\|u_0(t)\|_{\L^p}^p\leq C<\infty.
		\end{align}
	\end{theorem}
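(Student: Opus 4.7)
The plan is to construct the mild solution via Banach fixed point in $\mathcal{X}_T := \C([0,T];\L^p([0,1]))$, extend globally using an a priori $\L^p$-estimate carried out on the PDE \eqref{det1.1} itself, and derive uniqueness from the same Lipschitz bounds used in the contraction argument. Writing $\mathcal{T}u$ for the right-hand side of \eqref{1.4}, mild solutions are exactly the fixed points of $\mathcal{T}$ on $\mathcal{X}_T$.

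For local well-posedness I would estimate $\mathcal{T}u-\mathcal{T}v$ using the Green's function bounds collected in Appendix \ref{SUR}. The heat-kernel term $\int_0^1 G(t,x,y)u^0(y)\d y$ is non-expansive on $\L^p$. For the derivative-kernel term one uses the one-dimensional bound $\int_0^1|\partial_y G(t,x,y)|^q\d y\lesssim t^{(1-2q)/2}$, which is integrable in time for a suitable $q$ and pairs via H\"older with the polynomial difference $\p(u)-\p(v) = u^{\delta+1}-v^{\delta+1}$, whose $\L^{p/(\delta+1)}$-norm is bounded by $C(\|u\|_{\L^p}^\delta+\|v\|_{\L^p}^\delta)\|u-v\|_{\L^p}$. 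For the reaction term, since $\c$ is a polynomial of degree $2\delta+1$, we have $|\c(u)-\c(v)|\leq C(1+|u|^{2\delta}+|v|^{2\delta})|u-v|$ and therefore $\|\c(u)-\c(v)\|_{\L^{p/(2\delta+1)}}\leq C(1+\|u\|_{\L^p}^{2\delta}+\|v\|_{\L^p}^{2\delta})\|u-v\|_{\L^p}$, which is precisely where the hypothesis $p\geq 2\delta+1$ enters. Combining yields $\|\mathcal{T}u-\mathcal{T}v\|_{\mathcal{X}_{T_0}}\leq C(R)\,T_0^\theta\,\|u-v\|_{\mathcal{X}_{T_0}}$ on a closed ball of radius $R$ for some $\theta>0$, so picking $T_0$ small gives a contraction and a unique local mild solution. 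Uniqueness on the whole $[0,T]$ then follows by iterating the same estimate applied to the difference of two solutions.

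The main obstacle is the uniform-in-time bound \eqref{E31}, since the local existence time shrinks with the size of the data. I would obtain it by an a priori $\L^p$ energy estimate performed on \eqref{det1.1}: multiplying by $|u_0|^{p-2}u_0$ and integrating over $[0,1]$, the viscous term yields $-\nu(p-1)\int_0^1|u_0|^{p-2}|\partial_x u_0|^2\d x\leq 0$; the convection contribution $-\alpha\int_0^1 u_0^\delta\partial_x u_0\,|u_0|^{p-2}u_0\,\d x$ vanishes identically after integration by parts thanks to the Dirichlet boundary conditions, because the integrand is (up to a constant) $\partial_x$ of $|u_0|^{p+\delta}$ times a sign factor that vanishes at $x\in\{0,1\}$; and the reaction contribution is dominated by its strictly dissipative leading part $-\beta\int_0^1|u_0|^{p+2\delta}\d x$, with the lower-order polynomial terms absorbed using Young's inequality. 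This produces a differential inequality of the form $\tfrac{d}{dt}\|u_0(t)\|_{\L^p}^p\leq C(1+\|u_0(t)\|_{\L^p}^p)$, and Gr\"onwall's lemma delivers \eqref{E31}. The one subtle point is legitimizing this energy computation starting from the mild formulation \eqref{1.4}; this is standardly handled by approximating $u_0$ through smoother (e.g.\ Galerkin or Yosida) solutions for which the pointwise calculus is rigorous, establishing the estimate uniformly in the approximation, and passing to the limit. The combination of the local existence interval with the uniform $\L^p$-bound then extends the solution to all of $[0,T]$ and completes the proof.
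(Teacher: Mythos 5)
This theorem is stated by citation to \cite[Proposition 3.5]{AKMTM3} and the present paper does not reproduce a proof, so there is no in-text argument to compare against; what can be said is that the proof sketch in the paper for the linear skeleton equation (Proposition \ref{prop4.7}, explicitly modelled on Propositions 3.3 and 3.5 of \cite{AKMTM3}) works directly with the mild formulation and the Green's function bounds of Appendix \ref{SUR}, and the cited reference almost certainly combines that local fixed-point machinery with a dissipative $\L^p$-energy estimate exactly as you describe, since a Gronwall argument on the mild formulation alone produces a superlinear inequality that cannot give a global-in-time bound.

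Your proposal is sound. The local contraction estimates using \eqref{A1}, \eqref{A2}, \eqref{A7} and the polynomial Lipschitz bounds for $\p$ and $\c$ are the right ones, and the restriction $p\geq 2\delta+1$ does enter precisely through $\|\c(u)-\c(v)\|_{\L^{p/(2\delta+1)}}$ as you say (you should also verify the self-mapping property of $\mathcal T$ on the ball, not only the contraction, but this is routine). For the global bound, the energy argument is correct: multiplying by $|u_0|^{p-2}u_0$, the viscous term is nonpositive, the convection contribution $\int_0^1 u_0^{\delta}(\partial_x u_0)|u_0|^{p-2}u_0\,\d x = \int_0^1 \partial_x H(u_0)\,\d x$ with $H(u)=\int_0^u s^{\delta}|s|^{p-2}s\,\d s$ vanishes by the Dirichlet conditions regardless of the parity of $\delta$, and in the reaction term the leading piece $-\beta\int_0^1|u_0|^{p+2\delta}\d x$ (note $u_0^{2\delta+2}|u_0|^{p-2}=|u_0|^{p+2\delta}$ since $2\delta+2$ is even) absorbs the intermediate term $\beta(1+\gamma)\int_0^1 u_0^{\delta+2}|u_0|^{p-2}\d x$ via Young's inequality, leaving $\tfrac{\d}{\d t}\|u_0(t)\|_{\L^p}^p\leq C\|u_0(t)\|_{\L^p}^p$. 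Your acknowledgment that this formal calculation must be justified through smooth approximations is the correct caveat, and with that the continuation argument closes the proof. The only imprecision worth noting is the phrase ``the integrand is (up to a constant) $\partial_x$ of $|u_0|^{p+\delta}$ times a sign factor'': strictly it is the $x$-derivative of the antiderivative $H(u_0)$, which equals $|u_0|^{p+\delta}/(p+\delta)$ or $\operatorname{sgn}(u_0)|u_0|^{p+\delta}/(p+\delta)$ depending on the parity of $\delta$, but in either case it vanishes at the endpoints, so the conclusion stands.
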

	Let us recall a useful result from \cite{ICAM2}.
	\begin{lemma}[{\cite[Lemma 4.1]{ICAM2}}]\label{lem4.1.1}
		Let $\Psi(t):=\Psi(t,\omega)$ be a random a.s.~continuous non-decreasing process on $[0,T]$. Let $\tau^\rho =\inf\{t\geq 0: \Psi(t)\geq 
		\rho\}\wedge T$. Then $\P\big(\Psi(T)\geq \rho\big)=\P\big(\Psi(\tau^\rho)\geq \rho\big)$. Let $\tau^M$ be a stopping time such that $0\leq \tau^M\leq T$ and $\P\big(\tau^M<T\big)\leq \e$. Then 
		\begin{align}
			\P\big(\Psi(T)\geq \rho\big) \leq \P\big(\Psi(\tau^\rho\wedge \tau^M)\geq \rho\big)+\e.
		\end{align}
	\end{lemma}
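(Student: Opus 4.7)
The proof is essentially a set-theoretic argument exploiting the monotonicity and almost-sure continuity of $\Psi$, combined with a simple union bound on the stopping-time event $\{\tau^M<T\}$. The plan has two separate parts, mirroring the two conclusions of the lemma.

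For the equality $\P(\Psi(T)\geq\rho)=\P(\Psi(\tau^\rho)\geq\rho)$, I would argue that the two events coincide up to a $\P$-null set. The inclusion $\{\Psi(\tau^\rho)\geq\rho\}\subseteq\{\Psi(T)\geq\rho\}$ is immediate from monotonicity, since $\tau^\rho\leq T$ gives $\Psi(T)\geq\Psi(\tau^\rho)$. For the reverse inclusion, on the event $\{\Psi(T)\geq\rho\}$ the set $\{t\in[0,T]:\Psi(t)\geq\rho\}$ is nonempty, hence $\tau^\rho\leq T$; then continuity together with the definition of the infimum yields $\Psi(\tau^\rho)\geq\rho$ (either $\tau^\rho=T$ and $\Psi(\tau^\rho)=\Psi(T)\geq\rho$, or $\tau^\rho<T$, in which case continuity forces $\Psi(\tau^\rho)=\rho$ by approximating $\tau^\rho$ from above by a sequence of times at which $\Psi\geq\rho$).

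For the second claim, I would split the event $\{\Psi(T)\geq\rho\}$ according to whether the auxiliary stopping time $\tau^M$ exhausts $[0,T]$:
\begin{align*}
\P\bigl(\Psi(T)\geq\rho\bigr)
&=\P\bigl(\Psi(T)\geq\rho,\ \tau^M=T\bigr)+\P\bigl(\Psi(T)\geq\rho,\ \tau^M<T\bigr)\\
&\leq \P\bigl(\Psi(T)\geq\rho,\ \tau^M=T\bigr)+\varepsilon,
\end{align*}
using the hypothesis $\P(\tau^M<T)\leq\varepsilon$ on the second summand. On the event $\{\tau^M=T\}$, one has $\tau^\rho\wedge\tau^M=\tau^\rho\wedge T=\tau^\rho$, so applying the first part gives $\{\Psi(T)\geq\rho\}\cap\{\tau^M=T\}=\{\Psi(\tau^\rho\wedge\tau^M)\geq\rho\}\cap\{\tau^M=T\}$, and dropping the intersection with $\{\tau^M=T\}$ yields the desired bound.

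There is no substantive obstacle here; the only subtlety worth flagging is the use of a.s.~continuity of $\Psi$ to ensure that the infimum defining $\tau^\rho$ is actually attained (so that $\Psi(\tau^\rho)\geq\rho$ on $\{\Psi(T)\geq\rho\}$), which is what converts a strict inequality at time $T$ into a non-strict inequality at the hitting time. Everything else is bookkeeping with the monotonicity of $\Psi$ and the trivial bound $\P(\tau^M<T)\leq\varepsilon$.
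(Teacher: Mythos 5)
The paper does not prove this lemma; it is quoted verbatim from Chueshov--Millet \cite[Lemma 4.1]{ICAM2}, so there is no in-paper argument to compare against. Your blind proof is correct and is the standard argument: the set equality $\{\Psi(T)\geq\rho\}=\{\Psi(\tau^\rho)\geq\rho\}$ up to a null set (monotonicity for one inclusion, a.s.\ continuity for the other), followed by the decomposition over $\{\tau^M=T\}$ versus $\{\tau^M<T\}$ and the observation that $\tau^\rho\wedge\tau^M=\tau^\rho$ on the former. The only cosmetic slip is the claim that continuity forces $\Psi(\tau^\rho)=\rho$ whenever $\tau^\rho<T$; this can fail when $\tau^\rho=0$ and $\Psi(0)>\rho$, but only the inequality $\Psi(\tau^\rho)\geq\rho$ is used, so the argument is unaffected.
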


	\section{Central limit theorem}\label{Sec3}\setcounter{equation}{0}
	The purpose of this section is to demonstrate the Central Limit Theorem (CLT). Let us first introduce the following sequence of stopping times:
	
	\begin{align}\label{stoppingtime}
		\tau^{M,\e}:=\inf\{t\geq 0: \|u_\e(t)\|_{\L^p}> M\}\wedge T, \ \ M\in\N,
	\end{align}
	where $p >\{6,2\delta+1\}.$ The definition of the stopping time given in \eqref{stoppingtime} along with Theorem \ref{thrmE1} yields 
	\begin{align}\label{ST}
		\lim_{M\rightarrow \infty}\sup_{\e\in (0,1]}\P\big(\tau^{M,\e}< T\big)=0.
	\end{align}
	The above argument can be justified in the following way: 
	\begin{align*}
		\E\left[\|u_\e(t)\|_{\L^p}^p\right]&=\E\left[\|u_\e(t)\|_{\L^p}^p\chi_{\{\tau^{M,\e}<T\}}\right]+\E\left[\|u_\e(t)\|_{\L^p}^p\chi_{\{\tau^{M,\e}=T\}}\right]\geq \E\left[\|u_\e(t)\|_{\L^p}^p\chi_{\{\tau^{M,\e}<T\}}\right]\nonumber\\&\geq M^p\mathbb{E}\left[\chi_{\{\tau^{M,\e}<T\}}\right]=M^p\mathbb{P}\left(\tau^{M,\e}<T\right),
	\end{align*}
	so that 
	\begin{align*}
		\sup_{\e\in (0,1]}\mathbb{P}\big(\tau^{M,\e}<T\big)\leq \sup_{\e\in (0,1]} \frac{1}{M^p}\E\left[\|u_\e(t)\|_{\L^p}^p\right]\leq \frac{C}{M^p}\to 0, \ \text{ as }\ M\to\infty. 
	\end{align*}
	Note that the stopping time defined in \eqref{stoppingtime} plays a crucial role in the further analysis of this section. 
	\begin{proposition}\label{prop1}
		Under the  assumption \ref{hyp1}, for any $p> \max\{6,2\delta+1\}$, the following holds:
		\begin{align}\label{prop3.1.1}
			\E\bigg[\sup_{t\in[0,T\wedge \tau^{M,\e}]} \|u_\e(t)-u_0(t)\|_{\L^p}^p\bigg]\leq \e^{\frac {p}{2}} C(L,M, T, \delta, \gamma,\beta) \to 0 \ \text{ as }\ \e\to0.
		\end{align}
	\end{proposition}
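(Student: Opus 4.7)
The plan is to estimate the difference $u_\e - u_0$ directly from the mild formulations \eqref{mild1} and \eqref{1.4}. Subtracting these, the initial-data term cancels, and we are left with three contributions: a nonlinear advection difference involving $\p(u_\e)-\p(u_0)$, a reaction difference involving $\c(u_\e)-\c(u_0)$, and the stochastic integral $\sqrt\e \int_0^t\!\!\int_0^1 G(t-s,x,y)g(s,y,u_\e(s,y))\,\W^\Q(\d s,\d y)$. I would take the $\L^p$-norm in $x$, raise to the $p$-th power, and then sup over $t\in[0,T\wedge\tau^{M,\e}]$ before taking expectation, handling the three contributions separately.

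For the deterministic nonlinear terms, the key observation is that on $[0,T\wedge\tau^{M,\e}]$ both $\|u_\e(s)\|_{\L^p}$ and $\|u_0(s)\|_{\L^p}$ are bounded (the first by definition of the stopping time, the second by Theorem \ref{thrmE2}) by a constant depending on $M$. Since $\p(u)=u^{\delta+1}$ and $\c(u)=u(1-u^\delta)(u^\delta-\gamma)$ are polynomials, the pointwise difference factors through $u_\e-u_0$ with a polynomial weight in $u_\e,u_0$, yielding local Lipschitz bounds of the form $\|\p(u_\e)-\p(u_0)\|_{\L^q}+\|\c(u_\e)-\c(u_0)\|_{\L^q}\leq C(M,\delta,\gamma,\beta)\|u_\e-u_0\|_{\L^p}$ for a suitable $q$, via H\"older's inequality. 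Integrating these against $G$ and $\partial_y G$, I would invoke the Green's function $\L^p$-kernel estimates from Appendix \ref{SUR} (in particular the singular-kernel bounds for $\partial_y G$) to obtain, after applying Minkowski and H\"older in $s$, a bound of the form $C(M)\int_0^{t\wedge\tau^{M,\e}}(t-s)^{-\kappa}\|u_\e(s)-u_0(s)\|_{\L^p}^p\,\d s$ with $\kappa<1$, which is Gronwall-admissible.

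For the stochastic term, I would apply the UMD BDG inequality \eqref{18} in $\L^p([0,1])$ together with the linear-growth part of Hypothesis \ref{hyp1} and the uniform moment bound \eqref{E30}. The quadratic variation involves $\sum_{j}q_j^2[Gg\vphi_j]^2$, and after using $\sum_j q_j^2<\infty$ and the Green function kernel estimates from Appendix \ref{SUR}, the stochastic contribution is controlled by
\[
C\,\e^{p/2}\,\E\!\left[\Bigl(\int_0^{T\wedge\tau^{M,\e}}\!\!\!\!(T-s)^{-\kappa'}(1+\|u_\e(s)\|_{\L^p}^2)\,\d s\Bigr)^{p/2}\right]\le \e^{p/2}C(T,M,p),
\]
uniformly in $\e\in(0,1]$, which is the source of the $\e^{p/2}$ factor.

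Combining the three estimates gives an inequality of the form
\[
\Phi(t):=\E\Bigl[\sup_{s\in[0,t\wedge\tau^{M,\e}]}\|u_\e(s)-u_0(s)\|_{\L^p}^p\Bigr]\le \e^{p/2}C_1(M,T)+C_2(M,T,\delta,\gamma,\beta)\int_0^{t}(t-s)^{-\kappa}\Phi(s)\,\d s,
\]
and a singular Gronwall argument yields $\Phi(T)\le \e^{p/2}C(L,M,T,\delta,\gamma,\beta)$, which tends to zero as $\e\to 0$. I expect the main technical obstacle to be the advection term: $\partial_y G$ has a stronger singularity than $G$, so the kernel exponent $\kappa$ must be shown to remain strictly less than $1$ while simultaneously accommodating the $\L^p$-Minkowski step for the polynomial factor of degree $\delta$. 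This is precisely where the assumption $p>\max\{6,2\delta+1\}$ becomes essential, ensuring that H\"older exponents and the bounds of Appendix \ref{SUR} remain compatible.
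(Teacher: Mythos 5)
Your proposal follows essentially the same approach as the paper's proof: subtract the mild formulations so the initial-data term cancels, bound the polynomial advection/reaction differences on the stopped interval via Green's function estimates (Appendix~\ref{SUR}), H\"older/Minkowski, and the $M$-bound from the stopping time together with Theorem~\ref{thrmE2}, control the stochastic integral by the UMD BDG inequality \eqref{18} plus linear growth to extract the $\e^{p/2}$ factor, and close with a singular Gronwall argument. This is exactly the structure of the paper's argument, including the role of $p>\max\{6,2\delta+1\}$ in keeping the kernel singularities and H\"older exponents compatible.
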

	\begin{proof} From \eqref{mild1} and \eqref{1.4}, we have for all $t\in[0,T]$, $\P$-a.s.,
		\begin{align}\label{2.18}
			u_\e(t,x)-u_0(t,x)&=
			\nonumber \frac{\alpha}{\delta+1}\int_0^t\int_0^1\frac{\partial G}{\partial y}(t-s,x,y)(\p(u_\e(s,y))-\p(u_0(s,y)))\d y\d s\\ \nonumber &\quad +\beta\int_0^t\int_0^1G(t-s,x,y)(\c(u_\e(s,y))-\c(u_0(s,y)))\d y\d s\\\nonumber &\quad+ \sqrt{\e}
			\int_0^t\int_0^1G(t-s,x,y)g(s,y,u_\e(s,y))\W^\Q(\d s,\d y) \\& =:\frac{\alpha}{\delta+1}I_1(t,x)+\beta I_2(t,x)+\sqrt{\e}I_3(t,x). 
		\end{align} Taking $\L^p$-norm and then applying  \eqref{A2}, \eqref{A7}, Minkowski's, H\"older's and Young's inequalities , we estimate the term $\|I_1(\cdot)\|_{\L^p}$ as
		\begin{align}\label{I_1}\nonumber
			\|I_1(t)\|_{\L^p}&=\bigg\|\int_0^t\int_0^1\frac{\partial G}{\partial y}(t-s,x,y)(\p(u_\e(s,y))-\p(u_0(s,y)))\d y\d s\bigg\|_{\L^p}\\
			&\nonumber \leq C \int_0^t (t-s)^{-1}\big\|e^{\frac{-|\cdot|^2}{a_2(t-s)}}*\big|\p(u_\e(s,\cdot))-\p(u_0(s,\cdot))\big|\big\|_{\L^p}\d s\\
			&\nonumber\leq C \int_0^t (t-s)^{-1}\big\|e^{\frac{-|\cdot|^2}{a_2(t-s)}}\big\|_{\L^p} \|\p(u_\e(s))-\p(u_0(s))\|_{\L^1}\d s
			\\& \leq C \int_0^t (t-s)^{-1+\frac{1}{2p}} \left(\|u_\e(s)\|_{\L^{\frac{p\delta}{p-1}}}^\delta+\|u_0(s)\|_{\L^{\frac{p\delta}{p-1}}}^\delta\right)\|u_\e(s)-u_0(s)\|_{\L^p}\d s.
		\end{align} where the last inequality is obtained  by using  Taylor's formula (see \cite[Theorem 7.9.1]{PGC})
		\begin{align*}
			|\p(u_\e)-\p(u_0)|&\leq |(u_\e-u_0)\p'(u_\e+\theta (u_\e-u_0) )|\nonumber\leq 2^{\delta-1}(\delta+1)(|u_\e|^\delta+ |u_0|^\delta)|u_\e-u_0|,
		\end{align*}
		for some constant $\theta\in (0,1)$  	and H\"{o}lder's inequality. Using one more time H\"{o}lder's inequality in \eqref{I_1}, we find
		\begin{align*}
			\|I_1(t)\|_{\L^p}^p\leq& C\bigg[\int_0^t (t-s)^{-1+\frac{1}{2p}} \bigg(\|u_\e(s)\|_{\L^{\frac{p\delta}{p-1}}}^{\frac{p\delta}{p-1}}+\|u_0(s)\|_{\L^{\frac{p\delta}{p-1}}}^{\frac{p\delta}{p-1}}\bigg)\d r\bigg]^{p-1}\\&\quad 		\times \bigg[\int_0^t (t-s)^{-1+\frac{1}{2p}}\|u_\e(s)-u_0(s)\|_{\L^p}^p\d s\bigg].
		\end{align*}
		Taking supremum of time over  $[0,T\wedge \tau^{M,\e}]$, and then taking expectation on both sides, we get 
		\begin{align*}
			&	\E\bigg[\sup_{t\in [0,T\wedge \tau^{M,\e}]}\|I_1(t)\|_{\L^p}^p\bigg]\\&\leq C \E\bigg[\sup_{t\in [0,T\wedge \tau^{M,\e}]}\bigg\{\bigg(\int_0^t (t-s)^{-1+\frac{1}{2p}} \bigg(\|u_\e(s)\|_{\L^{\frac{p\delta}{p-1}}}^{\frac{p\delta}{p-1}}+\|u_0(s)\|_{\L^{\frac{p\delta}{p-1}}}^{\frac{p\delta}{p-1}}\bigg)\d s\bigg)^{p-1}
			\\&\quad\quad \times \bigg(\int_0^t (t-s)^{-1+\frac{1}{2p}}\|u_\e(s)-u_0(s)\|_{\L^p}^p\d s\bigg)\bigg\}\bigg]
			\\&\quad \leq  C(p,M,T, \delta)\int_0^{T} (t-s)^{-1+\frac{1}{2p}}\E\bigg[\sup_{r \in [0,s\wedge \tau^{M,\e}]}\|u_\e(r)-u_0(r)\|_{\L^p}^p\bigg]\d s,
		\end{align*}
		where we have used stopping time arguments and the bound of $u_0$ from Theorem   \ref{thrmE2} .
		
		Using  \eqref{A1}, \eqref{A7}, Minkowski's, H\"older's and Young's inequalities we estimate the term  $\|I_2(\cdot)\|_{\L^p}$, as
		\begin{align}\label{I2}\nonumber
			\|I_2(t)\|_{\L^p}&\leq C \int_0^t (t-s)^{-\frac12}\big\|e^{\frac{-|\cdot|^2}{a_2(t-s)}}*\big|\c(u_\e(s))-\c(u_0(s))\big|\big\|_{\L^p}\d s
			\\&\nonumber
			\leq C \int_0^t (t-s)^{-\frac12}\big\|e^{\frac{-|\cdot|^2}{a_2(t-s)}}\big\|_{\L^p}\|\c(u_\e(s))-\c(u_0(s))\|_{\L^1}\d s
			\\&\leq C \int_0^t (t-s)^{-\frac12+\frac{1}{2p}}\|\c(u_\e(s))-\c(u_0(s))\|_{\L^1}\d s.
		\end{align}
		Using Taylor's formula, we have the following inequality:
		\begin{align*}
			\big\|\c(u_\e)-\c(u_0)\big\|_{\L^1}&\leq (1+\gamma)\|u^{\delta+1}_\e-u_0^{\delta+1}\|_{\L^1}+\gamma\|u_\e-u_0\|_{\L^1}+\|u^{2\delta+1}_\e-u_0^{2\delta+1}\|_{\L^1}
			\\&\leq C(\delta, \gamma)\bigg\{1+\|u_\e\|^{\delta}_{\L^{\frac{\delta p}{p-1}}}+\|u_\e\|^{2\delta}_{\L^{\frac{2p\delta}{p-1}}}+\|u_0\|^{\delta}_{\L^{\frac{p\delta}{p-1}}}+\|u_0\|^{2\delta}_{\L^{\frac{2p\delta}{p-1}}}\bigg\}\|u_\e-u_0\|_{\L^p}.
		\end{align*}
		Proceeding in the same way as in the case of $I_1$, that is, by using H\"{o}lder's inequality, taking the supremum over time from $0$ to $T\wedge \tau^{M,\e}$,   taking expectation and then  using stopping arguments and the estimate \eqref{E31}  from Theorem \ref{thrmE2}, we get
		\begin{align*}
			\E\bigg[\sup_{t\in [0,T\wedge \tau^{M,\e}]}\|I_2(t)\|_{\L^p}^p\bigg]	\leq  C(M,T,\delta, \gamma)\int_0^{T} (t-s)^{-\frac{1}{2}+\frac{1}{2p}}\E\bigg[\sup_{r\in [0,s\wedge \tau^{M,\e}]}\|u_\e(r)-u_0(r)\|_{\L^p}^p\bigg]\d s.
		\end{align*}
		Finally, using Hypothesis \ref{hyp1}, Fubini's theorem, BDG (cf. \eqref{18}), H\"older's,   Minkowski's inequalities, \eqref{A1}, \eqref{A7} and Young's   inequality for convolution, we estimate the term \break$\E\bigg[\sup\limits_{t\in [0,T\wedge \tau^{M,\e}]}	\|I_3(t)\|_{\L^p}^p\bigg]$ as
		\begin{align}\label{noiseEstimate}\nonumber
			&\E\bigg[\sup_{t\in [0,T\wedge \tau^{M,\e}]}	\|I_3(t)\|_{\L^p}^p\bigg] \\&\nonumber=  \E\bigg[\sup_{t\in [0,T\wedge \tau^{M,\e}]}\bigg\|	\int_0^{t}\int_0^1G(t-s,\cdot,y)g(s,y,u_{\e}(s,y))\W^\Q(\d s,\d y) \bigg\|_{\L^p}^p\bigg]\\& \nonumber=  \E\bigg[\sup_{t\in [0,T\wedge \tau^{M,\e}]}\bigg\|\sum_{j\in\N}	\int_0^{t}\bigg(\int_0^1 G(t-s,\cdot,y)g(s,y,u_{\e}(s,y))q_j\varphi_j(y)\d y\bigg)\d \beta_j(s) \bigg\|_{\L^p}^p\bigg]	\\&\nonumber\leq C   \E\bigg[\bigg\|\bigg(\sum_{j\in\N}	\int_0^{T\wedge \tau^{M,\e}}\bigg|\int_0^1 q_j\varphi_j(y)G(t-s,\cdot,y)
			g(s,y,u_{\e}(s,y)\d y\bigg|^2\d s\bigg)^{\frac{1}{2}}\bigg\|_{\L^p}^p\bigg]	\nonumber\\&\leq C   
			\E\bigg[\bigg\|\bigg(\sum_{j\in\N}\int_0^1q_j^2\varphi_j^2(y)\d y\bigg(\int_0^{T\wedge \tau^{M,\e}}\int_0^1G^2(t-s,\cdot,y)g^2(s,y,u_{\e}(s,y))\d y\d s\bigg)\bigg)^{\frac{1}{2}}\bigg\|_{\L^p}^p\bigg]\nonumber\\&\nonumber \leq C\bigg(\sum_{j\in\N}q_j^2\bigg)^\frac{p}{2}  \E\bigg[\bigg\|\int_0^{T\wedge \tau^{M,\e}}\int_0^1G^2(t-s,\cdot,y)g^2(s,y,u_{\e}(s,y))\d y\d s\bigg\|_{\L^{\frac{p}{2}}}^{\frac{p}{2}}\bigg]
			\\&\nonumber \leq C\E\bigg[\bigg(\int_0^{T\wedge \tau^{M,\e}}(t-s)^{-1}\big\|e^{-\frac{|\cdot|^2}{a_1(t-s)}}*g^2(s,\cdot,u_{\e}(s,\cdot))\big\|_{\L^{\frac{p}{2}}}\d s\bigg)^{\frac{p}{2}}\bigg] \nonumber\\&\nonumber\leq  CK  T^{\frac{p-2}{4}}\E\bigg[ \int_0^{T\wedge \tau^{M,\e}}(t-s)^{-\frac{1}{2}}\big(1+\|u_{\e}(s)\|_{\L^p}^p\big)\d s\bigg]
			\\&\leq CK \big(1+M^p\big) T^\frac{p}{4},
		\end{align}where we have used the fact that $\{\varphi_j\}_{j\in\N}$ is an orthonormal family on $\L^2([0,1])$, $\sum\limits_{j\in\N}q_j^2<\infty$ and stopping time arguments.

		Combining the above estimates in \eqref{2.18}, we deduce
		\begin{align*}
			&\E\bigg[\sup_{t\in [0,T\wedge \tau^{M,\e}]} \|u_\e(t)-u_0(t)\|_{\L^p}^p\bigg]\\&\leq  \e^{\frac {p}{2}}CK(1+M^p)T^\frac{p}{4}+C(M, T, \delta, \gamma,\alpha,\beta)
			\int_0^T \big\{(t-s)^{-1+\frac{1}{2p}}+ (t-s)^{-\frac{1}{2}+\frac{1}{2p}}\big\}\\&\quad\times\E\bigg[\sup_{r\in [0,s\wedge \tau^{M,\e}]}\|u_\e(r)-u_0(r)\|_{\L^p}^p\bigg]\d s.
		\end{align*}
		An application of Gronwall’s inequality yields
		\begin{align}\label{CLT01}\E\bigg[\sup_{t\in [0,T\wedge \tau^{M,\e}]} \|u_\e(t)-u_0(t)\|_{\L^p}^p\bigg]\leq\e^{\frac {p}{2}} C(K,M, T, \delta, \gamma,\beta),
		\end{align}
		and the proof is completed.
	\end{proof}
	\begin{theorem}[Central limit theorem]\label{thrm2.2} 
		Under the assumption (\ref{hyp1}), for any $T>0$, the process $$v_\e:=\frac{u_\e-u_0}{\sqrt{\e}}$$ converges to a random field $v$ in probability on the space $\C([0,T];\L^p([0,1])),$ for any $p> \max\{6,2\delta+1\},$ where the random field $v$ satisfies the following integral equation for all $t\in[0,T]$, $\P$-a.s.,
		\begin{align}\label{2.21}
			v(t,x)&=\nonumber \frac{\alpha}{\delta+1}\int_0^t\int_0^1\frac{\partial G}{\partial y}(t-s,x,y)\p'(u_0(s,y))v(s,y)\d y\d s\\ \nonumber&\quad +\beta\int_0^t\int_0^1G(t-s,x,y)\c'(u_0(s,y))v(s,y)\d y\d s\\   &\quad+ \sqrt{\e}
			\int_0^t\int_0^1G(t-s,x,y)g(s,y,u_0(s,y))\W^\Q(\d s,\d y),
		\end{align}
		where $\p'(u_0)=(\delta+1)u_0^{\delta}$ and $\c'(u_0)=-\gamma+(1+\gamma)(1+\delta)u_0^{\delta}-(2\delta+1)u_0^{2\delta}$. 
	\end{theorem}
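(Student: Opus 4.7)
The strategy is to derive a mild-form integral equation for the difference $w_\e:=v_\e-v$ and apply a Gronwall-type argument on the stopped interval $[0,T\wedge\tau^{M,\e}]$, then deduce convergence in probability via Lemma \ref{lem4.1.1}. Subtracting \eqref{1.4} from \eqref{mild1} and dividing by $\sqrt{\e}$, I would Taylor-expand the polynomial nonlinearities around $u_0$:
\begin{align*}
\p(u_\e)-\p(u_0) &= \p'(u_0)(u_\e-u_0)+R_\p,\\
\c(u_\e)-\c(u_0) &= \c'(u_0)(u_\e-u_0)+R_\c,
\end{align*}
with remainders controlled by $|R_\p|+|R_\c|\leq C_\delta(1+|u_\e|^{2\delta-1}+|u_0|^{2\delta-1})|u_\e-u_0|^2$, and split the noise coefficient as $g(s,y,u_\e)=g(s,y,u_0)+[g(s,y,u_\e)-g(s,y,u_0)]$. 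Comparing with \eqref{2.21}, this yields an equation for $w_\e$ whose structure is: two kernel-linear contributions in $w_\e$ (coming from $\p'(u_0)$ and $\c'(u_0)$), plus three error terms involving $R_\p/\sqrt{\e}$, $R_\c/\sqrt{\e}$ and a stochastic integral against $g(s,y,u_\e)-g(s,y,u_0)$.

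Next, I would bound the $\L^p$-norm of each term on $[0,T\wedge\tau^{M,\e}]$ by mirroring the estimates of $I_1,I_2,I_3$ in the proof of Proposition \ref{prop1}, using the Green's function bounds \eqref{A1}--\eqref{A7}, the convolution/H\"older/Minkowski estimates, and the BDG inequality \eqref{18}. The linear parts produce singular-kernel integrals of the form
$$\int_0^t\bigl((t-s)^{-1+\frac{1}{2p}}+(t-s)^{-\frac12+\frac{1}{2p}}\bigr)\,\E\Bigl[\sup_{r\leq s\wedge\tau^{M,\e}}\|w_\e(r)\|_{\L^p}^p\Bigr]\d s,$$
thanks to the uniform $\L^p$-bound on $u_0$ from Theorem \ref{thrmE2}. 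For the Taylor-remainder terms, the stopping time gives $\|u_\e\|_{\L^p}\vee\|u_0\|_{\L^p}\leq M$, and a generalised H\"older argument yields $\|R_\p(s)\|_{\L^1}\vee\|R_\c(s)\|_{\L^1}\leq C_{\delta,M}\|u_\e(s)-u_0(s)\|_{\L^p}^2$. Combined with Proposition \ref{prop1}, these remainder terms, divided by $\sqrt{\e}$ and raised to the $p$-th power in the mild formula, contribute an amount of order $\e^{-p/2}\cdot(\e^{p/2})^2=\e^{p/2}\to 0$. The stochastic error term is handled via Hypothesis \ref{hyp1}, BDG as in \eqref{noiseEstimate}, and once more Proposition \ref{prop1}, yielding an $\mathcal{O}(\e^{p/2})$ bound as well.

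Assembling these estimates and applying Gronwall's inequality to $\e\mapsto\E\bigl[\sup_{t\in[0,T\wedge\tau^{M,\e}]}\|w_\e(t)\|_{\L^p}^p\bigr]$ yields
$$\E\Bigl[\sup_{t\in[0,T\wedge\tau^{M,\e}]}\|v_\e(t)-v(t)\|_{\L^p}^p\Bigr]\xrightarrow[\e\to 0]{}0$$
for every fixed $M$. To upgrade this to convergence in probability on $\C([0,T];\L^p([0,1]))$, I would apply Lemma \ref{lem4.1.1} with $\Psi(t):=\sup_{s\in[0,t]}\|v_\e(s)-v(s)\|_{\L^p}$ and $\tau^M=\tau^{M,\e}$: given $\eta,\rho>0$, choose $M$ so that $\sup_\e\P(\tau^{M,\e}<T)<\eta/2$ by \eqref{ST}, and then Markov's inequality combined with the stopped estimate bounds $\P(\Psi(T)>\rho)<\eta$ for all sufficiently small $\e$. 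The main technical obstacle will be the Taylor-remainder estimate: one has to juggle the polynomial factor $(|u_\e|^{2\delta-1}+|u_0|^{2\delta-1})$, the Green-kernel derivative singularity $(t-s)^{-1+\frac{1}{2p}}$, and the $\L^p$-integrability required for the quadratic $(u_\e-u_0)^2$ factor, which is exactly why the $\L^p$-stopping time $\tau^{M,\e}$ is indispensable. Finally, before executing these estimates, one must separately establish well-posedness of the linearised equation \eqref{2.21} in $\C([0,T];\L^p([0,1]))$ by a Banach fixed-point argument that relies on the same Green's function estimates \eqref{A1}--\eqref{A7} together with the boundedness of $\p'(u_0)$ and $\c'(u_0)$ on the underlying trajectory of $u_0$.
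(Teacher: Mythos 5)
Your overall strategy matches the paper closely: both of you form the difference between the mild formulations, Taylor-expand the polynomial nonlinearities $\p$ and $\c$ around $u_0$ to produce a Gronwall-friendly linear-in-$w_\e$ part plus a quadratic remainder, estimate the noise term via the Lipschitz condition and BDG, work on the stopped interval $[0,T\wedge\tau^{M,\e}]$, and upgrade to convergence in probability with Lemma \ref{lem4.1.1} and \eqref{ST}. The well-posedness of \eqref{2.21} is also handled the same way. Where you diverge is in the moment exponent used in the Gronwall step, and this is where your argument has a genuine gap.

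You propose to close Gronwall on $\E\bigl[\sup_{t\leq T\wedge\tau^{M,\e}}\|v_\e(t)-v(t)\|_{\L^p}^p\bigr]$, and you bound the Taylor-remainder contribution (after dividing by $\sqrt{\e}$ and raising to the $p$-th power) as $\e^{-p/2}\cdot\bigl(\e^{p/2}\bigr)^2=\e^{p/2}$. This step is not justified by Proposition \ref{prop1}. The quantity that actually appears is $\e^{-p/2}\,\E\bigl[\sup\|u_\e-u_0\|_{\L^p}^{2p}\bigr]$, but Proposition \ref{prop1} controls only $\E\bigl[\sup\|u_\e-u_0\|_{\L^p}^{p}\bigr]\lesssim\e^{p/2}$, and by Cauchy–Schwarz one has $\bigl(\E\bigl[\|u_\e-u_0\|^{p}\bigr]\bigr)^2\leq\E\bigl[\|u_\e-u_0\|^{2p}\bigr]$, i.e.\ the inequality runs in the wrong direction for your purposes. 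Using only what Proposition \ref{prop1} and the stopping time give you ($\|u_\e-u_0\|_{\L^p}\leq M+C$ on $[0,\tau^{M,\e}]$), the sharpest bound available is
\begin{align*}
\e^{-p/2}\,\E\Bigl[\sup\|u_\e-u_0\|_{\L^p}^{2p}\Bigr]\leq \e^{-p/2}\,(M+C)^{p}\,\E\Bigl[\sup\|u_\e-u_0\|_{\L^p}^{p}\Bigr]\leq C(M),
\end{align*}
which is a constant depending on $M$, not a quantity going to zero, so your Gronwall-in-$p$-th-power does not close. The paper circumvents this by running Gronwall on the \emph{first} moment $\E\bigl[\sup_{t}\|v_\e(t)-v(t)\|_{\L^p}\bigr]$ (see \eqref{CLT4}–\eqref{CLT5}) and invoking Jensen's inequality in the form $\E\bigl[\|v_\e\|_{\L^p}^2\bigr]\leq\bigl(\E\bigl[\|v_\e\|_{\L^p}^p\bigr]\bigr)^{2/p}$, which together with \eqref{CLT01} gives a bound of order $\e^{1/2}+\e^{\delta/2}+\e^{\delta}$ and does vanish. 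The other way to repair your argument is to first prove a $2p$-th moment version of Proposition \ref{prop1}, i.e.\ $\E\bigl[\sup_{t\leq T\wedge\tau^{M,\e}}\|u_\e(t)-u_0(t)\|_{\L^p}^{2p}\bigr]\leq C(M)\e^{p}$; the proof of Proposition \ref{prop1} does extend to this case because the BDG inequality \eqref{18} holds for any moment exponent, but that extension is an additional step you would have to carry out explicitly. Either switch to the first-moment Gronwall as in the paper, or state and justify the higher-moment analogue of Proposition \ref{prop1}; as written, the claim ``$\e^{-p/2}\cdot(\e^{p/2})^2=\e^{p/2}$'' is not supported.
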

	\begin{remark}
		With the regularity of $u_0\in \C([0,T];\L^p([0,1]))$, for $p\geq 2\delta+1$ (see Theorem \ref{thrmE2}), 	the well-posedness of the problem \eqref{2.21} follows in the same lines as in the proof of \cite[Proposition 3.5]{AKMTM3}. Moreover, we have 
		\begin{align}\label{3p10}
			\sup_{t\in[0,T]}\|v(t)\|_{\L^p}^p\leq C<\infty.
		\end{align}
	\end{remark}
	\begin{proof}[Proof of Theorem \ref{thrm2.2} ]
		Let us set  $v_\e:=\frac{u_\e-u_0}{\sqrt{\e}}$. In order to prove this theorem, we need to show that for any $\varpi>0,$
		\begin{align}\label{CLT1}
			\lim_{\e\rightarrow 0}\P\bigg(\sup_{t\in[0,T]}\|v_\e(t)-v(t)\|_{\L^p}\geq \varpi\bigg)=0,
		\end{align}which implies the convergence in probability and hence in distribution.
		In view of \eqref{CLT1}, we need to establish the following: 
		\begin{align*}
			\lim_{\e\rightarrow 0}\P\bigg(\sup_{t\in[0,T\wedge \tau^{M,\e}]}\|v_\e(t)-v(t)\|_{\L^p}\geq \varpi\bigg)=0,
		\end{align*} for any $M>0$ large enough. The above fact can be justified by using Lemma \ref{lem4.1.1} and \eqref{ST} as follows:
		\begin{align}\label{3.11}\P\bigg(\sup_{t\in[0,T]}\|v_\e(t)-v(t)\|_{\L^p}\geq \varpi\bigg)&\leq \P\bigg(\sup_{t\in[0,T\wedge \tau^{M,\e}]}\|v_\e(t)-v(t)\|_{\L^p}\geq \varpi\bigg)+\P(\tau^{M,\e}< T)\nonumber\\& \leq \frac{1}{\varpi} \E\bigg[\sup_{t\in[0,T\wedge \tau^{M,\e}]}\|v_\e(t)-v(t)\|_{\L^p}\bigg]+\P(\tau^{M,\e}< T),
		\end{align}where we have used Markov's inequality and Theorems \ref{thrmE1} and \ref{thrmE2}.

		By the definition of $v_\e$ and $v$, we have
		\begin{align}\label{2.22}
			&	v_\e(t,x)-\nonumber v(t,x)
			\\ \nonumber &	=\frac{\alpha}{\delta+1}\int_0^t\int_0^1\frac{\partial G}{\partial y}(t-s,x,y)\bigg[\frac{1}{\sqrt{\e}} \big(\p(u_\e(s,y))-\p(u_0(s,y))\big)-\p'(u_0(s,y))v(s,y)\bigg]\d y\d s 	\\ \nonumber &\quad +\beta\int_0^t\int_0^1G(t-s,x,y)\bigg[\frac{1}{\sqrt{\e}} \big(\c(u_\e(s,y))-\c(u_0(s,y))\big)-\c'(u_0(s,y))v(s,y)\bigg]\d y\d s	\\ \nonumber &\quad+ 
			\sqrt{\e}\int_0^t\int_0^1G(t-s,x,y)\big(g(s,y,u_\e(s,y))-g(s,y,u_0(s,y))\big)\W^\Q(\d s,\d y)
			\\& =:\frac{\alpha}{\delta+1}J_1^\e(t,x)+\beta J_2^\e(t,x)+\sqrt{\e}J_3^\e(t,x).
		\end{align}
		Using the fact that $v_\e=\frac{u_\e-u_0}{\sqrt{\e}}$, we can re-write the term $J_2^\e$ appearing in the right hand side of \eqref{2.22} as
		{\small	\begin{align}\label{2.23}\nonumber
				&	J_2^\e(t,x)\\&=\nonumber\int_0^t\int_0^1G(t-s,x,y)\bigg[\frac{1}{\sqrt{\e}}\big( \c(u_0(s,y)+\sqrt{\e} v_\e(s,y))-\c(u_0(s,y))\big)-\c'(u_0(s,y))v_\e(s,y)\bigg]\d y\d s
				\\\nonumber&\quad +\int_0^t\int_0^1G(t-s,x,y)\bigg[\c'(u_0(s,y))(v_\e(s,y)-v_0(s,y))\bigg]\d y \d s
				\\&=:J_{21}^\e(t,x)+ J_{22}^\e(t,x).
		\end{align}}
		Using Taylor's formula, there exists an $\eta^\e\in(0,1)$ such that 
		\begin{align}\label{CLT2}
			\c(u_0+\sqrt{\e} v_\e)-\c(u_0)=\sqrt{\e}\c'(u_0+\sqrt{\e}\eta^{\e}v_\e)v_\e.
		\end{align}
		Further, there exists a  $\kappa^\e\in(0,1)$, such that 
		\begin{align}\label{CLT3}	\nonumber
			&	\bigg|\bigg(\frac{1}{\sqrt{\e}}\big( \c(u_0+\sqrt{\e} v_\e)-\c(u_0)\big)\bigg)-\c'(u_0)v_\e\bigg|\\&\nonumber\quad
			=\big|\big(\c'(u_0+\sqrt{\e}\eta^{\e}v_\e)-\c'(u_0)\big)v_\e \big|\\&\nonumber\quad 	=\sqrt{\e}\eta^\e|\c''((u_0+\sqrt{\e}\kappa^{\e}v_\e))||v_\e|^2
			\\&\nonumber\quad=\sqrt{\e}\eta^\e \left|\delta(\delta+1)(\gamma+1)(u_0+\sqrt{\e}\kappa^{\e}v_\e)^{\delta-1}
			-2\delta(2\delta+1)(u_0+\sqrt{\e}\kappa^{\e}v_\e)^{2\delta-1}\right||v_\e|^2
			\\&\nonumber\quad \leq  C(\delta,\gamma)\left\{\left(\e^{\frac{1}{2}}|u_0|^{\delta-1}+\e^{\frac12}|u_0|^{2\delta-1}\right)|v_\e|^2
			+\left( \e^{\frac{\delta}{2}}|v_\e|^{\delta+1}+\e^{\delta}|v_\e|^{2\delta+1}\right)\right\}
			\\&\quad \leq  (\e^{\frac12}+\e^{\delta}+\e^{\frac{\delta}{2}})C(\delta,\gamma) \left\{\left(|u_0|^{\delta-1}+|u_0|^{2\delta-1}\right)|v_\e|^2
			+\left( |v_\e|^{\delta+1}+|v_\e|^{2\delta+1}\right)\right\},
		\end{align}
		where $\c''(u_0)=(1+\gamma)\delta(1+\delta)u_0^{\delta-1}-2\delta(2\delta+1) u_0^{2\delta-1}$. Now, we consider the term $\|J_{21}^\e(\cdot)\|_{\L^p}$ and estimate it using a similar calculation to \eqref{I2} along with \eqref{CLT2} and \eqref{CLT3}, to find 
		\begin{align*}
			&\|J_{21}^\e(t)\|_{\L^p}\\&\quad \leq (\e^{\frac12}+\e^{\delta}+\e^{\frac{\delta}{2}})C(\delta,\gamma)\bigg[\int_{0}^{t}(t-s)^{-\frac12+\frac{1}{2p}}\bigg(\big\|\big(|u_0(s)|^{\delta-1}+|u_0(s)|^{2\delta-1}\big)|v_\e(s)|^2\big\|_{\L^1}
			\\&\qquad\quad +\big\| |v_\e(s)|^{\delta+1}+|v_\e(s)|^{2\delta+1}\big\|_{\L^{1}}\bigg)\d s\bigg]
			\\&\quad \leq (\e^{\frac12}+\e^{\delta}+\e^{\frac{\delta}{2}})C(\delta,\gamma)\bigg[\int_{0}^{t}(t-s)^{-\frac12+\frac{1}{2p}}\bigg\{\bigg(\|u_0(s)\|_{\L^{\frac{p(\delta-1)}{p-2}}}^{\delta-1}+\|u_0(s)\|_{\L^{\frac{p(2\delta-1)}{p-2}}}^{2\delta-1}\bigg)	\|v_\e(s)\|_{\L^p}^{2}
			\\&\qquad\quad+\bigg( \|v_\e(s)\|_{\L^{\delta+1}}^{\delta+1}+\|v_\e(s)\|_{\L^{2\delta+1}}^{2\delta+1}\bigg)\bigg\}\d s\bigg].
		\end{align*}
		Taking supremum from $0$ to  $T\wedge \tau^{M,\e},$ and then taking expectation on both sides of above inequality, we deduce 
		\begin{align*}
			\E\bigg[\sup_{t\in [0,T\wedge \tau^{M,\e}]}\|J_{21}^\e(t)\|_{\L^p}\bigg]
			\leq(\e^{\frac12}+\e^{\delta}+\e^{\frac{\delta}{2}}) C( M,\delta,\gamma,T).
		\end{align*}
		Next, we  estimate  the term  $J_{22}^\e$ as
		\begin{align*}
			\|J_{22}^\e(t)\|_{\L^p}&\leq C\int_{0}^{t}(t-s)^{-\frac12+\frac{1}{2p}}\|\c'(u_0(s))
			(v_\e(s)-v(s))\|_{\L^1}\d s
			\\&\leq C\int_{0}^{t}(t-s)^{-\frac12+\frac{1}{2p}}\|\c'(u_0(s))\|_{\L^{\frac{p}{p-1}}}\|v_\e(s)-v(s)\|_{\L^p}\d s.
		\end{align*}
		Taking supremum from $0$ to  $T\wedge \tau^{M,\e}$, and then taking expectation on both sides of above inequality, we get
		\begin{align}\label{J_22}\nonumber
			\E&\bigg[\sup_{t\in [0,T\wedge \tau^{M,\e}]}\|J_{22}^\e(t)\|_{\L^p}\bigg]
			\\&\quad\leq C\int_{0}^{T}(t-s)^{-1+\frac{1}{2p}}\|\c'(u_0(s))\|_{\L^{\frac{p}{p-1}}}
			\E\bigg[\sup_{r\in [0,s\wedge \tau^{M,\e}]}\|v_\e(r)-v(r)\|_{\L^p}\bigg]	\d s.
		\end{align}
		Combining the estimates of $J_{21}^\e$ and $J_{22}^\e$ in \eqref{2.23}, we obtain 
		\begin{align*}
			&\E\bigg[\sup_{t\in [0,T\wedge \tau^{M,\e}]}\|J_{2}^\e(t)\|_{\L^p}\bigg]
			\\&\leq (\e^{\frac12}+\e^{\delta}+\e^{\frac{\delta}{2}})C(\delta,\gamma,T)	\\&\quad +	C\int_{0}^{T}(t-s)^{-\frac12+\frac{1}{2p}}\|\c'(u_0(s))\|_{\L^{\frac{p}{p-1}}}
			\E\bigg[\sup_{r\in [0,s\wedge \tau^{M,\e}]}\|v_\e(s)-v(s)\|_{\L^p}\bigg]\d s.	
		\end{align*}
		In a similar way, we estimate the term $J_{1}^\e$ as
		\begin{align}\label{PJ_1}\nonumber
			&\E\bigg[\sup_{t\in [0,T\wedge \tau^{M,\e}]}\|J_{1}^\e(t)\|_{\L^p}\bigg]
			\\&\leq \e^{\frac{\delta}{2}}C(\delta,\gamma,T)+	C\int_{0}^{T}(t-s)^{-\frac12+\frac{1}{2p}}\|\p'(u_0(s))\|_{\L^{\frac{p}{p-1}}}
			\E\bigg[\sup_{r\in [0,s\wedge \tau^{M,\e}]}\|v_\e(r)-v(r)\|_{\L^p}\bigg]	\d s.	
		\end{align}Let us consider the term $\E\bigg[\sup\limits_{t\in[0,T\wedge\tau^{M,\e}]}\|J_3^\e(t)\|_{\L^p}^p\bigg]$, and estimate it using Hypothesis \ref{hyp1}, Fubini's theorem, BDG (cf. \eqref{18}), H\"older's,   Minkowski's inequalities, \eqref{A1}, \eqref{A7}, Young's   inequality for convolution and \eqref{CLT01} (see Proposition \ref{prop1}) as 
		\begin{align}\label{noiseEstimate1}\nonumber
			&\E\bigg[\sup_{t\in [0,T\wedge \tau^{M,\e}]}	\|J_3(t)\|_{\L^p}^p\bigg] 
			\\& \nonumber=  \E\bigg[\sup_{t\in[0,T\wedge \tau^{M,n}]}\bigg\|\sum_{j\in\N}	\int_0^{t}\bigg(\int_0^1 G(t-s,\cdot,y)\\&\nonumber\qquad\quad\times\big(g(s,y,u_{\e}(s,y))-g(s,y,u_{0}(s,y))\big)q_j\varphi_j(y)\d y\bigg)\d \beta_j(s) \bigg\|_{\L^p}^p\bigg]	\\&\nonumber\leq C   \E\bigg[\bigg\|\bigg(\sum_{j\in\N}	\int_0^{T\wedge \tau^{M,\e}}\big|\big(G(t-s,\cdot,y)
			(g(s,y,u_{\e}(s,y))-g(s,y,u_{0}(s,y))),q_j\varphi_j(y)\big)\big|^2\d s\bigg)^{\frac{1}{2}}\bigg\|_{\L^p}^p\bigg]	
			\\&\nonumber \leq\bigg(\sum_{j\in\N}q_j^2\bigg)^{\frac{p}{2}} CL  \E\bigg[\bigg\|\int_0^{T\wedge \tau^{M,\e}}\int_0^1G^2(t-s,\cdot,y)\big|u_{\e}(s,y)-u_{0}(s,y)\big|^2\d y\d s\bigg\|_{\L^{\frac{p}{2}}}^{\frac{p}{2}}\bigg]
			\\&\nonumber \leq CL\E\bigg[\bigg(\int_0^{T\wedge \tau^{M,\e}}(t-s)^{-1}\big\|e^{-\frac{|\cdot|^2}{a_1(t-s)}}*\big|(u_{\e}-u_{0})^2(s))\big|\big\|_{\L^{\frac{p}{2}}}\d s\bigg)^{\frac{p}{2}}\bigg] \nonumber
			\\&\leq  CL  T^{\frac{p-2}{4}}\E\bigg[ \int_0^{T\wedge \tau^{M,\e}}(t-s)^{-\frac{1}{2}}\|u_{\e}(s)-u_{0}(s)\|_{\L^p}^p\d s\bigg]\nonumber
			\\&\leq  CL  T^{\frac{p-2}{4}}  \int_0^{T}(t-s)^{-\frac{1}{2}}   \E\bigg[\sup_{r\in[0,s\wedge \tau^{M,\e}]}\|u_{\e}(r)-u_{0}(r)\|_{\L^p}^p\bigg]\d s
			\nonumber\\&\leq \e^{\frac {p}{2}} C(L,M, T, \delta, \gamma,\beta).
		\end{align}
		Substituting the above estimates in \eqref{2.22}, we find
		\begin{align}\label{CLT4}\nonumber
			&\E\bigg[\sup_{t\in [0,T\wedge \tau^{M,\e}]}\|v_\e(t)-v(t)\|_{\L^p}\bigg]
			\\&\nonumber \leq \big(\e^\frac{1}{2}+\e^\delta+\e^\frac{\delta}{2}\big)C(M,\delta,\gamma,T)\\&\nonumber\quad +C\int_0^T\bigg((t-s)^{-1+\frac{1}{2p}}\|\c'(u_0(s))\|_{\L^{\frac{p}{p-1}}}+(t-s)^{-\frac{1}{2}-\frac{1}{2p}}\|\p'(u_0(s))\|_{\L^{\frac{p}{p-1}}}\bigg)\\&\nonumber\qquad\quad\times	\E\bigg[\sup_{r\in [0,s\wedge \tau^{M,\e}]}\|v_\e(r)-v(r)\|_{\L^p}\bigg]	\d s+\e^{\frac{p+1}{2}}C(L,M,\delta,\gamma,\beta,T)
			\\&\nonumber \leq \big(\e^\frac{1}{2}+\e^\delta+\e^\frac{\delta}{2}+\e^{\frac{p+1}{2}}\big)C(L,M,\delta,\gamma,T)\\&\nonumber\quad +C\int_0^T\bigg((t-s)^{-1+\frac{1}{2p}}\|\c'(u_0(s))\|_{\L^{\frac{p}{p-1}}}+(t-s)^{-\frac{1}{2}-\frac{1}{2p}}\|\p'(u_0(s))\|_{\L^{\frac{p}{p-1}}}\bigg)\\&\qquad\quad\times	\E\bigg[\sup_{r\in [0,s\wedge \tau^{M,\e}]}\|v_\e(r)-v(r)\|_{\L^p}\bigg]	\d s.
		\end{align}
		An application of Gronwall's inequality in \eqref{CLT4} yields
		\begin{align}\label{CLT5}\nonumber
			&\E\bigg[\sup_{t\in [0,T\wedge \tau^{M,\e}]}\|v_\e(t)-v(t)\|_{\L^p}\bigg] 	\\&\nonumber \leq \big(\e^\frac{1}{2}+\e^\delta+\e^\frac{\delta}{2}+\e^{\frac{p+1}{2}}\big)C\\&\quad\times \exp\bigg\{C\int_0^T\bigg((t-s)^{-1+\frac{1}{2p}}\|\c'(u_0(s))\|_{\L^{\frac{p}{p-1}}}+(t-s)^{-\frac{1}{2}-\frac{1}{2p}}\|\p'(u_0(s))\|_{\L^{\frac{p}{p-1}}}\bigg)\d s\bigg\},
		\end{align}where the term in the exponential in the right hand side of the above inequality is finite by using Theorem \ref{thrmE2} (see \eqref{E31}). 
		
		Finally, passing $\e\to0$ in \eqref{CLT5} leads to 
		\begin{align*}
			\lim_{\e\to0} \E\bigg[\sup_{t\in [0,T\wedge \tau^{M,\e}]}\|v_\e(t)-v(t)\|_{\L^p}\bigg] =0,
		\end{align*}
		which completes the proof by using \eqref{3.11}.
	\end{proof}
	
	\section{Moderate Deviation Principle}\label{Sec4}\setcounter{equation}{0}
	In this section,  we first recall some basic definitions related to LDP  and then we state a sufficient condition for LDP to hold, followed by our main result (Theorem \ref{MDPthm}) of this section. Let $\EE$ be a Polish space.
	\begin{definition}[Rate Function] A function $I:\EE \to [0,\infty]$ is called a \emph{rate function} on $\EE$ if it is  lower semicontinuous on $\EE$. A rate function $\I(\cdot)$ on $\EE$ is said to be a good rate function on $\EE$ if for every $N\in[0,\infty)$, the level set $\mathrm{K}:=\{\psi\in\EE:\I(\psi)\leq N\}$ is a compact subset of $\EE$.
	\end{definition}
	\begin{definition}[Speed function]
		A family of positive numbers $\{\lambda(\e)\}_{\e>0},$ is called a \emph{speed function} if $\lambda(\e)\to +\infty$ and $\sqrt{\e}\lambda(\e)\rightarrow 0$, as $\e \rightarrow 0$.
	\end{definition}
	\begin{definition}[LDP]\label{LDP}
		Let $\I:\EE\to[0,\infty]$ be a rate function on $\EE$. The sequence $\{\mathrm{X}_\e\}_{\e>0}$ satisfies an LDP on the space $\EE$ with the rate function $\I(\cdot)$ and with the speed $\{\lambda(\e)\}_{\e>0},$ if the following hold:
		\begin{enumerate}
			\item For any closed subset $\F\subset\EE$, 
			\begin{align*}
				\limsup_{\e\to0}\frac{1}{\lambda(\e)}\log \P\big(\mathrm{X}_\e\in\F\big) \leq -\inf_{\psi\in\F}\I(\psi).
			\end{align*}
			\item For any open subset $\mathrm{G}\subset\EE$, 
			\begin{align*}
				\liminf_{\e\to0}\frac{1}{\lambda(\e)}\log \P\big(\mathrm{X}_\e\in\G\big) \geq -\inf_{\psi\in\mathrm{G}}\I(\psi).
			\end{align*}
		\end{enumerate}
	\end{definition}
	Our proof of LDP for the law of solution of the system \eqref{1.1}-\eqref{1.2} is based on a weak convergence approach (see \cite{ADPDVM}). From the work \cite{ADPDVM}, one can obtain the equivalency between the Laplace principle and LDP.
	
	\begin{definition}[Laplace principle]\label{LP}
		The sequence of the random variables $\{\mathrm{X}_\e\}_{\e>0}$ on $\EE$ is said to satisfy the \emph{Laplace principle} with the rate function $\I(\cdot)$ having speed $\lambda(\e)$ if for each $f\in \C_b(\EE)$ (space of bounded continuous functions from $\EE$ to $\R$), 
		\begin{align*}
			\lim_{\e\to0}\lambda(\e)\log \E\bigg[\exp\bigg\{-\frac{f(\mathrm{X}_\e)}{\lambda(\e)}\bigg\}\bigg]=-\inf_{\psi\in\EE}\big\{f(\psi)+\I(\psi)\big\}.
		\end{align*}
	\end{definition}
	
	Define the Cameron-Martin space 
	\begin{align*}
		\mathcal{H}:=\bigg\{\mathfrak{h}=(\mathfrak{h}_1,\mathfrak{h}_2,\ldots,&\mathfrak{h}_j\ldots):[0,T]\to\ell^2\big| \  \mathfrak{h} \text{ is absolutely} \\&
		\text{continuous and }\|\mathfrak{h}\|_{\mathcal{H}}^2:=\sum_{j\in\N}\int_0^{T}|\dot{\mathfrak{h}}_j(s)|^2\d s<+\infty\bigg\},
	\end{align*}
	where the dot denotes the weak derivative (for more details see \cite[Section 4]{ADPDVM} or \cite[Section 2]{MRTZXZ}). Let $\langle \cdot,\cdot \rangle_\mathcal{H}$ represent the inner product in $\mathcal{H}$. Define the following sets:
	\begin{align*}
		\UU^N &:=\bigg\{\mathfrak{h}\in\mathcal{H}: \sum_{j\in\N}\int_0^{T}|\dot{\mathfrak{h}}_j(s)|^2\d s\leq N \bigg\}, \ N\in\N,\\
		\PP_2&:=\bigg\{\mathfrak{h}\in \mathcal{H}: \mathfrak{h} \text{ is an $\mathscr{F}_t$-adapted process}\bigg\},\\
		\PP_2^N&:=\bigg\{\mathfrak{h}\in\PP_2: \ \mathfrak{h}(\omega)\in\UU^N, \ \ \P\text{-a.s.}\bigg\}, 
	\end{align*}
	where $\UU^N$ denotes a compact metric space equipped with the weak topology in $\mathcal{H}$. The set $\PP_2^N$ represents a permissible collection of controls.
	
	Consider another Polish space $\EE_0$, such that the initial data $u^0$ belongs to a compact subset of $\EE_0$.
	For every $\e>0$, we obtain the mapping $\mathscr{G}_\e:\EE_0\times\C([0,T];\R^{\infty})\to\EE$ as a collection of measurable functions. Consider $\mathrm{X}_{\e,u^0}(\cdot)=\mathscr{G}_\e\big(u^0,\sqrt{\e}\boldsymbol{\beta}(
	\cdot)\big),$ where $\boldsymbol{\beta}(\cdot):=(\beta_1(\cdot),\beta_2(\cdot),\ldots,\beta_j(\cdot),\ldots)$ represents an infinite-dimensional Brownian motion. Our aim is to identify a sufficient criterion for the uniform Laplace principle (ULP) concerning the family $\{\mathrm{X}_{\e,u^0}\}_{\e>0}$.

	\begin{condition}\label{Cond2}
		There exists a measurable map $\mathscr{G}_0:\EE_0\times \C([0,T]; \R^{\infty})\to\EE$ such that the following hold:
		\begin{enumerate}
			\item For $N\in\N$ and compact subset $\mathrm{K}\subset\EE_0$, let $\mathfrak{h}_m,\mathfrak{h}\in\UU^N$ and $u^0_m, u^0\in \EE_0$ be such that $\mathfrak{h}_m\xrightarrow{w}\mathfrak{h}$ and $u^0_m\to u^0$, as $m\to\infty$. Then 
			the set $$\mathrm{K}_N=\bigg\{\mathscr{G}_0\bigg(u^0,\int_0^{\cdot}\dot{\mathfrak{h}}{(s)}\d s\bigg): \dot{\mathfrak{h}}\in\UU^N ~\text{and}~ u^0\in \mathrm{K}\bigg\}$$ is a compact subset of $\EE$, that is,
			\begin{align*}
				\mathscr{G}_0\bigg(u^0_m,\int_0^{\cdot}\dot{\mathfrak{h}}^m(s)\d s\bigg)\to 	\mathscr{G}_0\bigg(u^0,\int_0^{\cdot}\dot{\mathfrak{h}}(s)\d s\bigg), \ \ \text{ as } \ \ m\to\infty.
			\end{align*}
			
			\item For $N\in\N$, let $\mathfrak{h}^\e,\mathfrak{h}\in\PP_2^N$  be such that $\mathfrak{h}^\e$ converges in distribution (as $\UU^N$-valued random elements) to $\mathfrak{h}$, and the initial data $u^0_\e\to u^0$, as $\e\to0$ in $\EE_0$. Then 
			\begin{align*}
				\mathscr{G}_\e\bigg(u^0_\e,\sqrt{\e}\boldsymbol{\beta}+\int_0^{\cdot}\dot{\mathfrak{h}}_\e{(s)}\d s\bigg)\Rightarrow	\mathscr{G}_0\bigg(u^0,\int_0^{\cdot}\mathfrak{h}(s)\d s\bigg),   \ \ \text{ as } \ \  \e\to 0,
			\end{align*}where $``\Rightarrow"$ represents the convergence in distribution.
		\end{enumerate}
		
		For $\psi\in\EE$ and $u^0\in\EE_0$, we define 
		\begin{align*}
			\SS_\psi:=\bigg\{\mathfrak{h}\in\mathcal{H}: \ \psi=\mathscr{G}_0\bigg(u^0,\int_0^{\cdot}\dot{\mathfrak{h}}(s)\d s\bigg)\bigg\}, 
		\end{align*}and the rate function as 
		\begin{equation}\label{RF}
			\I_{u^0}(\psi)=
			\left\{
			\begin{aligned}
				&	\inf_{\mathfrak{h}\in\SS_{\psi}}\bigg\{\frac{1}{2}\int_0^T\|\dot{\mathfrak{h}}(s)\|_{\ell^2}^2\d s\bigg\}, \ \ &&\text{ if } \ \ \psi\in\EE,\\
				&+\infty, \ \ &&\text{ if } \ \ \SS_\psi=\emptyset,
			\end{aligned}
			\right.
		\end{equation}where \begin{align*}
			\inf_{\mathfrak{h}\in\SS_{\psi}}\bigg\{\frac{1}{2}\int_0^T\|\dot{\mathfrak{h}}(s)\|_{\ell^2}^2\d s\bigg\}&:=\inf_{\mathfrak{h}\in\SS_{\psi}}\bigg\{\frac{1}{2}\sum_{j\in\N}\int_0^T|\dot{\mathfrak{h}}_j(s)|^2\d s\bigg\}\\&\ =\inf_{\left\{\mathfrak{h}\in\mathcal{H}:~   \psi=\mathscr{G}_0(u^0,\int_0^{\cdot}\dot{\mathfrak{h}}(s)\d s)\right\}}\bigg\{\frac{1}{2}\int_0^T\|\dot{\mathfrak{h}}(s)\|_{\ell^2}^2\d s\bigg\}.
		\end{align*}	
	\end{condition}
	The following theorem provides the general criteria for an LDP   for a family $\{\mathrm{X}_{\e,u^0}\}_{\e>0}$.
	
	\begin{theorem}[{\cite[Theorem 7]{ADPDVM}}]\label{thrmLDP}
		Let $\mathscr{G}_0:\EE_0\times \C([0,T];\R^\infty)\to\EE$ be a measurable map, and the Condition \ref{Cond2} holds. Then, the family $\{\mathrm{X}_{\e,u^0}\}_{\e>0}$ satisfies the Laplace principle on $\EE$ with the rate function $\I_{u^0}(\cdot)$ given in \eqref{RF}, uniformly over the initial data $u^0$ on the compact subsets of $\EE_0$.
	\end{theorem}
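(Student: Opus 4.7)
The plan is to prove the statement by way of the Boué--Dupuis variational representation for positive functionals of the infinite-dimensional Brownian motion $\boldsymbol{\beta}$, which converts the Laplace principle into a stochastic control problem. For any $f \in \C_b(\EE)$ one has, for each $\e>0$,
\begin{align*}
-\e \log \E\!\left[\exp\!\left(-\frac{f(\mathrm{X}_{\e,u^0})}{\e}\right)\right] = \inf_{\mathfrak{h}\in\PP_2}\E\!\left[\frac{1}{2}\int_0^T\|\dot{\mathfrak{h}}(s)\|_{\ell^2}^2\,\d s + f\!\left(\mathscr{G}_\e\!\left(u^0,\sqrt{\e}\boldsymbol{\beta}+\int_0^\cdot\dot{\mathfrak{h}}(s)\,\d s\right)\right)\right].
\end{align*}
Because $f$ is bounded by some $\|f\|_\infty$, standard truncation arguments allow one to restrict the infimum to controls in $\PP_2^N$ for $N$ depending only on $\|f\|_\infty$, at the cost of an error that vanishes as $N\to\infty$. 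Hence the whole analysis reduces to passing to the limit inside an expectation indexed by controls taking values in the compact set $\UU^N$.

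The lower bound on the Laplace functional (equivalently, the large-deviation upper bound) is obtained as follows. For each $\e$, pick a near-minimizing control $\mathfrak{h}_\e\in\PP_2^N$. Since $\UU^N$ is compact in the weak topology on $\mathcal{H}$, the sequence $\{\mathfrak{h}_\e\}$ is tight as $\UU^N$-valued random elements, and a Skorohod/Jakubowski representation supplies a weakly convergent subsequence $\mathfrak{h}_\e \Rightarrow \mathfrak{h}\in\PP_2^N$. Applying part~(2) of Condition~\ref{Cond2} yields
\begin{align*}
\mathscr{G}_\e\!\left(u^0_\e,\sqrt{\e}\boldsymbol{\beta}+\int_0^\cdot\dot{\mathfrak{h}}_\e(s)\,\d s\right)\;\Rightarrow\;\mathscr{G}_0\!\left(u^0,\int_0^\cdot\dot{\mathfrak{h}}(s)\,\d s\right),
\end{align*}
and combining the continuity of $f$, Fatou's lemma, and the lower semicontinuity of the $\mathcal{H}$-norm under weak convergence, one bounds the Laplace functional below by $\inf_{\psi\in\EE}\{f(\psi)+\I_{u^0}(\psi)\}$.

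For the matching upper bound (the large-deviation lower bound), fix $\mathfrak{h}\in\UU^N$ with $\mathscr{G}_0(u^0,\int_0^\cdot\dot{\mathfrak{h}}\,\d s)$ close to a near-minimizer of $\psi\mapsto f(\psi)+\I_{u^0}(\psi)$, and use this deterministic $\mathfrak{h}$ as the control in the variational formula. Part~(2) of Condition~\ref{Cond2} gives the convergence of the controlled trajectories, so the variational expectation is upper-bounded by $\frac{1}{2}\|\mathfrak{h}\|_\mathcal{H}^2 + f(\mathscr{G}_0(u^0,\int_0^\cdot\dot{\mathfrak{h}}\,\d s)) + o(1)$, yielding the desired estimate. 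Part~(1) of Condition~\ref{Cond2}, i.e. the compactness of $\mathrm{K}_N$ in $\EE$, guarantees that $\I_{u^0}$ is a good rate function (its sublevel sets are images of the compact $\UU^N$ under a continuous map) and legitimizes passing from Laplace to LDP via Varadhan/Bryc's lemma.

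The main obstacle is turning these bounds into \emph{uniform} statements over $u^0$ in an arbitrary compact $\mathrm{K}\subset\EE_0$. This requires (a) that the near-minimizing $\mathfrak{h}_\e$ can be chosen measurably in $u^0_\e$, and (b) that the convergence in Condition~\ref{Cond2} is joint in the pair $(u^0_\e,\mathfrak{h}_\e)$, not just in $\mathfrak{h}_\e$ for a fixed initial datum. The standard remedy is to argue by contradiction: if uniformity failed, there would exist sequences $u^0_{\e_n}\in\mathrm{K}$, $\e_n\to 0$, for which the Laplace limit breaks by a fixed margin; extracting a convergent subsequence $u^0_{\e_n}\to u^0\in\mathrm{K}$ (by compactness of $\mathrm{K}$) and re-running the two one-sided arguments above along this pair contradicts the pointwise Laplace principle at $u^0$. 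This last step is where Condition~\ref{Cond2} in its joint form, rather than a pointwise form, is essential.
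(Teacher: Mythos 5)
The paper does not prove Theorem~\ref{thrmLDP}: it is imported verbatim from \cite[Theorem 7]{ADPDVM} and used as a black box, so there is no in-paper argument to compare against. Your sketch is a faithful high-level reconstruction of the weak-convergence proof in that reference (and in the Budhiraja--Dupuis framework more generally): the Bou\'e--Dupuis variational representation for exponential functionals of $\boldsymbol{\beta}$, truncation of the infimum to $\PP_2^N$ using the boundedness of $f$, the Laplace lower bound via Prohorov/Skorokhod, Fatou, weak lower semicontinuity of $\|\cdot\|_{\mathcal{H}}^2$ and Condition~\ref{Cond2}(2), the matching upper bound via a near-optimal deterministic control and again Condition~\ref{Cond2}(2), compactness of $\mathrm{K}_N$ from Condition~\ref{Cond2}(1) for goodness of the rate function, and the sequential-compactness/contradiction argument upgrading the pointwise Laplace principle to a uniform one over compacts in $\EE_0$. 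Two cosmetic remarks: since $\mathcal{H}$ is separable Hilbert, $\UU^N$ with the relative weak topology is compact metrizable (hence Polish), so the classical Skorokhod representation theorem suffices and Jakubowski's generalization is not needed; and the closing step from Laplace principle to LDP via Varadhan/Bryc is not actually part of what Theorem~\ref{thrmLDP} asserts (which is only the Laplace principle), though the paper does use that equivalence separately when stating Theorem~\ref{MDPthm}. Neither remark affects the correctness of your reconstruction.
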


	Returning  to our model, let us introduce a process $\mathrm{Z}_\e:= \frac{u_\e-u_0}{\sqrt{\e}\lambda(\e)}$, $\e>0$. In view of the integral equations \eqref{mild1} and \eqref{1.4}, the process  $\mathrm{Z}_\e(\cdot)$ satisfies the following integral equation for all $t\in[0,T]$, $\P$-a.s.,
	\begin{align}\label{4.2}\nonumber
		&\mathrm{Z}_\e(t,x)
		\\&:=\frac{\alpha}{\delta+1} \frac{1}{\sqrt{\e} \lambda(\e)}\int_0^t\int_0^1\frac{\partial G}{\partial y}(t-s,x,y)\left(\p\big(u_0(s,y)+\sqrt{\e} \lambda(\e)	\mathrm{Z}_\e(s,y)\big)-\p(u_0(s,y))\right)\d y\d s\nonumber \\&\nonumber\quad + \frac{\beta}{\sqrt{\e} \lambda(\e)}\int_0^t\int_0^1G(t-s,x,y)\left(\c\big(u_0(s,y)+\sqrt{\e} \lambda(\e)	\mathrm{Z}_\e(s,y)\big)-\c(u_0(s,y))\right)\big)\d y\d s\\
		&\quad+ \frac{1}{ \lambda(\e)}\sum_{j\in\N}
		\int_0^t\int_0^1G(t-s,x,y)g\big(s,y,u_0(s,y)+\sqrt{\e} \lambda(\e)	\mathrm{Z}_\e(s,y)\big)q_j\varphi_j(y)\d \beta_j(s)\d y \nonumber	\\
		&=: I_1(t,x)+I_2(t,x)+I_3(t,x).
	\end{align}

	\begin{theorem}\label{thrm4.6}
		Assume that Hypothesis \ref{hyp1} holds and the initial data $u^0\in\L^p([0,1])$, for $p>\max\{6,2\delta+1\}$. Then, there exists a unique $\L^p([0,1])$-valued $\mathscr{F}_t$-adapted continuous process $\mathrm{Z}_\e(\cdot)$    satisfying the integral equation \eqref{4.2} such that 
		\begin{align}\label{43}
			\E\bigg[\sup_{t\in[0,T\wedge\tau^{M,\e}]}\|\mathrm{Z}_\e(t)\|_{\L^p}^p\bigg]\leq C(\alpha,\beta,\gamma,\delta,p,T,M,K,u_0),
		\end{align}
		where $\tau^{M,\e}$ is the stopping time defined 
		in  \eqref{stoppingtime}.  Further, for any $\rho>0,$ we have
		\begin{align}\label{4.4}
			\lim_{\rho\to\infty}\sup_{\e\in(0,1]}	\P\bigg(\sup_{t\in[0,T]}\|\mathrm{Z}_{\e}(t)\|_{\L^p}>\rho\bigg)=0.
		\end{align}
	\end{theorem}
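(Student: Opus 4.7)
\textbf{Existence and uniqueness of $\mathrm{Z}_\e$.} Since $\mathrm{Z}_\e = (u_\e - u_0)/(\sqrt{\e}\lambda(\e))$ is a linear combination of $u_\e$ and $u_0$, its existence, uniqueness, $\mathscr{F}_t$-adaptedness and $\P$-a.s.\ path continuity are inherited from Theorems \ref{thrmE1} and \ref{thrmE2}. Subtracting \eqref{1.4} from \eqref{mild1} and dividing through by $\sqrt{\e}\lambda(\e)$ yields exactly the integral equation \eqref{4.2}, so no further existence argument is required.

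\textbf{Estimating the drift terms $I_1$, $I_2$.} The crucial observation is the pointwise identity $u_0 + \sqrt{\e}\lambda(\e)\mathrm{Z}_\e = u_\e$, whose $\L^p$-norm is bounded by $M$ on $[0, T\wedge\tau^{M,\e}]$ by the definition \eqref{stoppingtime}, while Theorem \ref{thrmE2} provides a global bound on $\|u_0\|_{\L^p}$. For $I_1$, Taylor's formula gives
\[
\p(u_\e)-\p(u_0) \,=\, \sqrt{\e}\lambda(\e)(\delta+1) \Bigl[\int_0^1 (u_0+\theta(u_\e-u_0))^\delta \,\d\theta\Bigr] \mathrm{Z}_\e,
\]
so the prefactor $1/(\sqrt{\e}\lambda(\e))$ in the definition of $I_1$ is exactly cancelled. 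Applying the Green function estimate \eqref{A2}, Young's convolution inequality, and Hölder's inequality as in \eqref{I_1}, and bounding the polynomial factor by $C\bigl(\|u_\e\|_{\L^p}^\delta+\|u_0\|_{\L^p}^\delta\bigr)\leq C(M)$, I obtain a linear integral inequality in $\|\mathrm{Z}_\e\|_{\L^p}^p$ with weakly singular kernel of order $(t-s)^{-1+\frac{1}{2p}}$. The same scheme applied to $\c'$ (of polynomial growth $2\delta$) treats $I_2$ with the slightly more integrable kernel $(t-s)^{-\frac12+\frac{1}{2p}}$.

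\textbf{The noise term $I_3$ and conclusion.} Here the linear growth hypothesis on $g$ together with the identification $u_0+\sqrt{\e}\lambda(\e)\mathrm{Z}_\e=u_\e$ gives $|g(s,y,u_\e)|\leq K(1+|u_\e|)$, which does \emph{not} involve $\mathrm{Z}_\e$ explicitly. The BDG-based argument of \eqref{noiseEstimate}, applied to the rescaled integrand and using the stopping-time bound $\|u_\e\|_{\L^p}\leq M$, then produces a bound of the form
\[
\E\Bigl[\sup_{t\in[0,T\wedge\tau^{M,\e}]}\|I_3(t)\|_{\L^p}^p\Bigr]\,\leq\, \frac{CK(1+M^p)T^{p/4}}{\lambda(\e)^p},
\]
which is uniformly bounded in $\e\in(0,1]$ because $\lambda(\e)\to\infty$ as $\e\to 0$ keeps $\lambda$ bounded away from $0$ on $(0,1]$. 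Assembling the three estimates and invoking a Gronwall inequality with weakly singular kernel establishes \eqref{43}. The uniform tightness \eqref{4.4} then follows directly from Lemma \ref{lem4.1.1} and Markov's inequality:
\[
\P\Bigl(\sup_{t\in[0,T]}\|\mathrm{Z}_\e(t)\|_{\L^p}>\rho\Bigr)\,\leq\, \frac{1}{\rho^p}\,\E\Bigl[\sup_{t\in[0,T\wedge\tau^{M,\e}]}\|\mathrm{Z}_\e(t)\|_{\L^p}^p\Bigr] \,+\, \P\bigl(\tau^{M,\e}<T\bigr);
\]
taking $\sup_{\e\in(0,1]}$, using \eqref{43} on the first term and \eqref{ST} on the second, and sending first $M\to\infty$ and then $\rho\to\infty$ closes the argument.

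\textbf{Main obstacle.} The chief subtlety is reconciling the small-scale prefactor $1/(\sqrt{\e}\lambda(\e))$ with the polynomial nonlinearities $\p$ and $\c$ without picking up uncontrolled powers of $\mathrm{Z}_\e$. The resolution is to track the polynomial factors through the pathwise identification $u_\e = u_0 + \sqrt{\e}\lambda(\e)\mathrm{Z}_\e$ and bound them by the stopping-time threshold $M$, rather than expanding them as polynomials in $\mathrm{Z}_\e$ carrying $(\sqrt{\e}\lambda(\e))^k$ factors that are neither uniformly small nor uniformly bounded on $(0,1]$.
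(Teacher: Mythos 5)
Your overall strategy matches the paper's: subtract \eqref{1.4} from \eqref{mild1} to get \eqref{4.2}, use Taylor's formula to cancel the $1/(\sqrt{\e}\lambda(\e))$ prefactors in the drift terms, estimate everything on $[0,T\wedge\tau^{M,\e}]$ where $\|u_\e\|_{\L^p}\leq M$, run Gronwall, and then deduce \eqref{4.4} via Lemma~\ref{lem4.1.1} and Markov's inequality. The paper's Taylor bound (see \eqref{I14.5}) is exactly your observation that the intermediate point $u_0+\vartheta(u_\e-u_0)$ satisfies $|u_0+\vartheta(u_\e-u_0)|^\delta\leq C(|u_0|^\delta+|u_\e|^\delta)$, so the two arguments coincide in substance.

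There is, however, one genuine slip in your deduction of \eqref{4.4}. You write that one should send first $M\to\infty$ and then $\rho\to\infty$. This order does not work: the constant $C(\alpha,\beta,\gamma,\delta,p,T,M,K,u_0)$ in \eqref{43} grows with $M$, so in the bound
\[
\sup_{\e\in(0,1]}\P\Bigl(\sup_{t\in[0,T]}\|\mathrm{Z}_\e(t)\|_{\L^p}>\rho\Bigr)\leq \frac{C(M)}{\rho^p}+\sup_{\e\in(0,1]}\P\bigl(\tau^{M,\e}<T\bigr),
\]
letting $M\to\infty$ with $\rho$ fixed makes the first term blow up. The correct order is the reverse: fix $M$, send $\rho\to\infty$ to kill the $C(M)/\rho^p$ term (leaving $\sup_\e\P(\tau^{M,\e}<T)$), and only then send $M\to\infty$ using \eqref{ST}. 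This is exactly what the paper does.

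A smaller point: your justification that $\lambda(\e)^{-p}$ is uniformly bounded on $(0,1]$ rests on the claim ``$\lambda(\e)\to\infty$ as $\e\to0$ keeps $\lambda$ bounded away from $0$ on $(0,1]$.'' Strictly speaking, the definition of a speed function only prescribes the behavior as $\e\to 0$ and says nothing about $\e$ near $1$. The conclusion is true for the standard choices (e.g.\ $\lambda(\e)=\e^{-\theta}$, or any continuous positive $\lambda$), but it does not follow from the stated hypotheses alone; it is an implicit regularity assumption on $\lambda$ that the paper also uses tacitly. Worth flagging in a polished write-up, but not a flaw in the argument itself.
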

	\begin{proof}
		Existence and uniqueness results for $\mathrm{Z}_\e(\cdot)$ can be obtained by combining  the proofs of Theorems \ref{thrmE1} and \ref{thrmE2}. We are providing a proof of the estimates \eqref{43} and \eqref{4.4} only.
		
		\vspace{2mm} 
		\noindent
		\emph{Proof of \eqref{43}.} From \eqref{4.2}, we have
		\begin{align}\label{I14.4}
			\|I_1(t)\|_{\L^p}\leq C \frac{\alpha}{\delta+1} \frac{1}{\sqrt{\e} \lambda(\e)}\int_0^t (t-s)^{-1+\frac{1}{2p}}\left\|\left(\p\big(u_0(s)+\sqrt{\e} \lambda(\e)	\mathrm{Z}_\e(s)\big)-\p(u_0(s))\right)\right\|_{\L^1}\d s.
		\end{align}
		By Taylor's formula, there exists a $\vartheta^\e \in (0,1)$ such that 
		\begin{align}\label{I14.5}
			|\p\big(u_0+\sqrt{\e} \lambda(\e)	\mathrm{Z}_\e\big)-\p(u_0)|&=\sqrt{\e} \lambda(\e)| \p'(u_0+\vartheta^\e\sqrt{\e} \lambda(\e)\mathrm{Z}_\e) \Z_\e|\nonumber\\
			&= (\delta+1)\sqrt{\e} \lambda(\e)|(u_0+\vartheta^\e(u_\e-u_0))^\delta \Z_\e|
			\nonumber\\& \leq 2^{\delta-1}(\delta+1)\sqrt{\e} \lambda(\e)((1-\vartheta^\e)^\delta |u_0|^\delta |\Z_\e|+\vartheta^\delta |u_\e^\delta|| \Z_\e|)
			\nonumber\\
			&\leq 2^{\delta-1}(\delta+1)\sqrt{\e} \lambda(\e)\left(|u_0|^\delta|\Z_\e|+|u_\e|^\delta|\Z_\e|\right).
		\end{align}	
		For the stopping time $\tau^{M,\e}$ defined  in  \eqref{stoppingtime}, let us consider \eqref{I14.4}, raise to  the $p^{\mathrm{th}}$ power, take supremum from $0$ to $T\wedge \tau^{M,\e}$ and then take expectation on both sides and employ \eqref{I14.5} and H\"older's inequality, to find
		\begin{align*}
			&\E\bigg[\sup_{t\in[T\wedge\tau^{M,\e}]}\|I_1(t)\|_{\L^p}^p\bigg]
			\nonumber\\
			&\leq C\E\bigg[\bigg(\int_0^{T\wedge\tau^{M,\e}} (t-s)^{-1+\frac{1}{2p}}\left\{\|u_\e(s)\|_{\L^{\frac{p\delta}{p-1}}}^\delta\|\mathrm{Z}_\e(s)\|_{\L^p}+\|u_0(s)\|_{\L^{\frac{p\delta}{p-1}}}^\delta\|\mathrm{Z}_\e(s)\|_{\L^p}
			\right\}\d s\bigg)^p\bigg]\nonumber\\
			&
			\leq C \E\bigg[\int_0^{T\wedge\tau^{M,\e}}(t-s)^{-1+\frac{1}{2p}}\|u_\e(s)\|_{\L^{\frac{p\delta}{p-1}}}^{p\delta}\|\Z_\e(s)\|_{\L^p}^p\d s\bigg]
			\nonumber\\
			&\quad	+C \bigg(\int_0^T (t-s)^{-1+\frac{1}{2p}}\|u_0(s)\|_{\L^{\frac{p\delta}{p-1}}}^{\frac{p\delta}{p-1}}\d s \bigg)^{p-1} \E\bigg[\int_0^{T\wedge\tau^{M,\e}}(t-s)^{-1+\frac{1}{2p}}\|\Z_\e(s)\|_{\L^p}^p\d s\bigg].
		\end{align*}
		Using the stopping time defined in \eqref{stoppingtime} and the estimate \eqref{E31},  we obtain
		\begin{align}\label{thrm4.71}
			&\E\bigg[\sup_{t\in[0,T\wedge\tau^{M,\e}]}\|I_1(t)\|_{\L^p}^p\bigg]\leq  C(\alpha,\delta,T,M,u_0)\E\bigg[\int_0^{T\wedge\tau^{M,\e}}(t-s)^{-1+\frac{1}{2p}}\|\Z_\e(s)\|_{\L^p}^p\d s\bigg].
		\end{align}
		For the term $I_2$ appearing in the right hand side of \eqref{4.2}, we have
		\begin{align*}
			\|I_2(t)\|_{\L^p}&\leq C  \frac{\beta}{\sqrt{\e} \lambda(\e)}\int_0^t (t-s)^{-\frac12+\frac{1}{2p}}\left\|\left(\c\big(u_0(s)+\sqrt{\e} \lambda(\e)	\mathrm{Z}_\e(s)\big)-\c(u_0(s))\right)\right\|_{\L^1}\d s\nonumber\\
			&=C  \frac{\beta}{\sqrt{\e} \lambda(\e)}\int_0^t (t-s)^{-\frac12+\frac{1}{2p}}\left\|(1+\gamma)\left(\big(u_0(s)+\sqrt{\e} \lambda(\e)	\mathrm{Z}_\e(s)\big)^{\delta+1}-(u_0(s))^{\delta+1}\right)\right.
			\nonumber\\
			&\left. \qquad -\gamma \sqrt{\e} \lambda(\e)	\mathrm{Z}_\e(s) -\left(\big(u_0(s)+\sqrt{\e} \lambda(\e)	\mathrm{Z}_\e(s)\big)^{\delta+2}-(u_0(s))^{\delta+2}\right) \right\|_{\L^1}\d s.
		\end{align*}
		By performing similar calculations as in the estimate for the term $I_1$ (see \eqref{I14.4}-\eqref{thrm4.71}), one can deduce
		\begin{align}\label{thrm4.72}	\E\bigg[\sup\limits_{t\in[0,T\wedge\tau^{M,\e}]}\|I_2(t)\|_{\L^p}^p\bigg]\leq C(\beta,\gamma,\delta,T,M,u_0)\E\bigg[\int_0^{T\wedge\tau^{M,\e}}(t-s)^{-\frac12+\frac{1}{2p}}\|\Z_\e(s)\|_{\L^p}^p\d s\bigg].
		\end{align}
				From \eqref{noiseEstimate} (cf. \cite[Eq. (2.9)]{KKM}), we have
				\begin{align}\label{thrm4.73}
					\E\bigg[\sup_{t\in[0,T\wedge \tau^{M,\e}]}\|I_3(t)\|_{\L^p}^p\bigg] \leq C(p,K,M,T).
				\end{align}Combining \eqref{thrm4.71}, \eqref{thrm4.72} and \eqref{thrm4.73} with \eqref{4.2} and applying Gronwall's inequality, the required result \eqref{43} follows.
			\end{proof}
			\begin{proof}[Proof of \eqref{4.4}]
				Further, for the stopping time defined in \eqref{stoppingtime}, by using Markov's inequality and Lemma \ref{lem4.1.1}, we have
				\begin{align*}
					\sup_{\e\in (0,1]}\P\bigg(\sup_{t\in[0,T]}\|\mathrm{Z}_{\e}(t)\|_{\L^p}>\rho\bigg)&\leq  \sup_{\e\in (0,1]}\P\bigg(\sup_{t\in[0,T\wedge \tau^{M,\e} ]}\|\mathrm{Z}_{\e}(t)\|_{\L^p}>\rho\bigg) +\sup_{\e\in (0,1]}\P\big(\tau^{M,\e}<T\big)\\
					&\leq \frac{1}{\rho^p}\sup_{\e\in (0,1]}\E\bigg[\sup_{t\in[0,T\wedge \tau^{M,\e}]}\|\Z_\e(t)\|_{\L^p}^p\bigg]+\sup_{\e\in (0,1]}\P\big(\tau^{M,\e}<T\big)\\
					&\leq \frac{1}{\rho^p} C(p, K,M, T)+\sup_{\e\in (0,1]}\P\big(\tau^{M,\e}<T\big).		
				\end{align*}
				For each fixed $M$, we obtain
				\begin{align*}
					\lim_{\rho\to \infty}\sup_{\e\in (0,1]}\P\bigg(\sup_{t\in[0,T]}\|\mathrm{Z}_{\e}(t)\|_{\L^p}>\rho\bigg)\leq\sup_{\e\in (0,1]}\P\big(\tau^{M,\e}<T\big). 
				\end{align*}
				Now, passing $M\to\infty$ in both sides on the above inequality, we find
				\begin{align*}
					\lim_{\rho\to \infty}\sup_{\e\in (0,1]}\P\bigg(\sup_{t\in[0,T]}\|\mathrm{Z}_{\e}(t)\|_{\L^p}>\rho\bigg)=0,
				\end{align*}
				which completes the proof of \eqref{4.4}.
			\end{proof}
			
			The main goal of this section is to  establish an LDP for the family $\{\mathrm{Z}_{\e}\}_{\e>0}:=\{\mathrm{Z}_\e(\cdot):\e>0\}$. Once the Condition \ref{Cond2} is verified,   Theorem \ref{thrmLDP} assures that the family $\{\mathrm{Z}_{\e}\}_{\e>0}$ satisfies a ULP.
			Subsequently, by Theorem \ref{MDPthm} (see \cite{ADPDVM}), we deduce that an LDP holds for the family  $\{\mathrm{Z}_{\e}\}_{\e>0}$. Therefore, our aim is to  verify the Condition \ref{Cond2} for the family $\{\mathrm{Z}_\e\}_{\e>0}$  defined in \eqref{4.2}. To accomplish this, we  introduce controlled and skeleton integral equations associated with \eqref{4.2} in the following subsection:

			\subsection{Controlled and skeleton equations}
			Let us fix $\EE:=\C([0,T];\L^p([0,1]))$,  $\EE_0:=\L^p([0,1]),$ for $p>\max\{6,2\delta+1\}$.   By an application of the celebrated Yamada-Watanabe theorem (for details see \cite[Definitions 1.8 and 1.9, Theorem 2,1]{MRBSXZ}),  a unique mild solution coincides with a probabilistically strong solution (\cite[Proposition 3.7]{IGCR}), and the solution mapping for the integral equation \eqref{4.2} can be defined as $$\mathrm{Z}_\e(\cdot):=\mathscr{G}_\e\big(u^0,\sqrt{\e}\boldsymbol{\beta}(\cdot)\big).$$ Let us now consider  the following stochastic controlled integral (SCI) equation for all $t\in[0,T]$, $\P$-a.s.:
			\begin{align}\label{CE1}
				\nonumber
				&\mathrm{Z}_{\e,\mathfrak{h}^\e}(t,x)\\&\nonumber=\frac{\alpha}{\delta+1}\frac{1}{\sqrt{\e} \lambda(\e)}\int_0^t\int_0^1\frac{\partial G}{\partial y}(t-s,x,y)\big(\p\big(u_0(s,y)+\sqrt{\e} \lambda(\e)\mathrm{Z}_{\e,\mathfrak{h}^\e}(s,y)\big)-\p(u_0(s,y))\big)\d y\d s\\&\nonumber\quad +\frac{\beta}{\sqrt{\e} \lambda(\e)}\int_0^t\int_0^1G(t-s,x,y)\big(\c\big(u_0(s,y)+\sqrt{\e} \lambda(\e)	\mathrm{Z}_{\e,\mathfrak{h}^\e}(s,y)\big)-\c(u_0(s,y))\big)\d y\d s\\&\nonumber\quad+\frac{1}{ \lambda(\e)} \sum_{j\in\N}
				\int_0^t\int_0^1G(t-s,x,y)\big(\sigma_j\big(s,y,(u_0+\sqrt{\e} \lambda(\e)	\mathrm{Z}_{\e,\mathfrak{h}^\e})(s,y)\big)\big)\d y \d \beta_j(s)\\
				&\quad+ \sum_{j\in\N}
				\int_0^t\int_0^1G(t-s,x,y)\sigma_j\big(s,y,(u_0+\sqrt{\e} \lambda(\e)	\mathrm{Z}_{\e,\mathfrak{h}^\e})(s,y)\big)\dot{\mathfrak{h}}_j^\e(s)\d y\d s,
			\end{align}
			where $\sigma_j(\cdot,\cdot,u(\cdot,\cdot)):=q_jg(\cdot,\cdot,u(\cdot,\cdot)\varphi_j(\cdot)$.
			
			By an application of Girsanov's theorem (see \cite[Theorem 10.14]{DaZ}),    the solution $\mathrm{Z}_{\e,\mathfrak{h}^\e}(\cdot)$ of the integral equation \eqref{CE1} can be represented as  $$\mathrm{Z}_{\e,\mathfrak{h}^\e}(\cdot):=\mathscr{G}_{\e}\bigg(u^0, \boldsymbol{\beta}(\cdot)+\lambda(\e)\int_0^{\cdot}\dot{\mathfrak{h}}^{\e}(s)\d s\bigg).$$
			For more details, see Proposition  \ref{prop4.6} below.
			
			It is worth noticing that  for every $u\in\L^2([0,1])$, the linear map $\sigma(\cdot,\cdot,u):\big(\sigma_j(\cdot,\cdot,u)\big)_{j\in\N}= \big(q_jg(\cdot,\cdot,u)\varphi_j\big)_{j\in\N}:\ell^2\to \L^2([0,1])$ defined by 
			\begin{align}\label{CE051}
				\sigma(\cdot,\cdot,u)\kappa:=\sum_{j\in\N}\sigma_j(\cdot,\cdot,u)\mathrm{\kappa}_j=\sum_{j\in\N}q_jg(\cdot,\cdot,u)\varphi_j\mathrm{\kappa}_j, \ \ \text{ for } \ \ \mathrm{\kappa}=(\mathrm{\kappa}_j)_{j\in\N}\in\ell^2,
			\end{align}is in $\L_2(\ell^2,\L^2([0,1]))$ (the space of Hilbert-Schmidt operators from $\ell^2$ to $\L^2([0,1]))$. For a proof, one may refer to \cite[pp. 27]{KKM}.

			Let us denote the solution  $\mathrm{Z}_{0,\mathfrak{h}}(\cdot):=\mathrm{Z}_\mathfrak{h}(\cdot):=\mathscr{G}_0\big(\int_0^{\cdot}\dot{\mathfrak{h}}(s)\d s\big)$ of the following skeleton equation for all $t\in[0,T]$,
			\begin{align}\label{CE2}
				\nonumber
				\mathrm{Z}_\mathfrak{h}(t,x)&=\frac{\alpha}{\delta+1}\int_0^t\int_0^1\frac{\partial G}{\partial y}(t-s,x,y)\p'(u_{0}(s,y))\mathrm{Z}_\mathfrak{h}(s,y)\d y\d s\\&\nonumber\quad +\beta\int_0^t\int_0^1G(t-s,x,y)\c'(u_{0}(s,y))\mathrm{Z}_\mathfrak{h}(s,y)\d y\d s\\&\quad+ \sum_{j\in\N}
				\int_0^t\int_0^1G(t-s,x,y)\sigma_j(s,y,u_0(s,y))\dot{\mathfrak{h}}_j(s)\d y\d s.
			\end{align}For $\psi\in \C([0,T];\L^p([0,1]))$, we define the rate function
			\begin{align}\label{CE3}
				\I_{u^0}(\psi ):=\inf_{\big\{\mathfrak{h}\in \mathcal{H}:\  \psi=\mathscr{G}_0(u^0,\int_0^{\cdot}\dot{\mathfrak{h}}(s)\d s)\big\}}\bigg\{\frac{1}{2} \int_0^T\|\dot{\mathfrak{h}}(s)\|^2_{\ell_2}\d s\bigg\}.
			\end{align}
			The following results provide the existence and uniqueness of solutions to the systems \eqref{CE1} and \eqref{CE2}, respectively.
			\begin{proposition}\label{prop4.6}
				Assume that Hypothesis \ref{hyp1} holds and $\mathfrak{h}\in\PP_2^N$. Then, for any initial data $u^0\in\L^p([0,1])$,  for $p>\max\{6,2\delta+1\}$, there exists a unique $\L^p([0,1])$-valued $\mathscr{F}_t$-adapted continuous process $\mathrm{Z}_{\e,\mathfrak{h}^\e}(\cdot)$ satisfying the SCI equation \eqref{CE1} such that
				\begin{align}\label{4p8}
					\sup_{\e\in(0,1]}	\E\bigg[\sup_{t\in[0,T\wedge \tau^{M,\e}]}\|\mathrm{Z}_{\e,\mathfrak{h}^\e}(t)\|_{\L^p}^p\bigg]\leq C<\infty.
				\end{align}
				Further, for any $\rho>0,$ we have
				\begin{align}\label{4p8.1}
					\lim_{\rho\to\infty}\sup_{\e\in(0,1]}	\P\bigg(\sup_{t\in[0,T]}\|\mathrm{Z}_{\e,\mathfrak{h}^\e}(t)\|_{\L^p}>\rho\bigg)=0.
				\end{align}
			\end{proposition}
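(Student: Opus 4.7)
The plan is to closely parallel the proof of Theorem \ref{thrm4.6}, with one new ingredient needed for the control drift. Existence and uniqueness for the SCI equation \eqref{CE1} follow from Girsanov's theorem, as the authors hint: since $\mathfrak{h}^\e\in\PP_2^N$, the Dol\'eans exponential $\exp\bigl(\lambda(\e)\int_0^T\dot{\mathfrak{h}}^\e(s)\cdot\d\boldsymbol{\beta}(s)-\tfrac{\lambda(\e)^2}{2}\int_0^T\|\dot{\mathfrak{h}}^\e(s)\|_{\ell^2}^2\d s\bigr)$ satisfies Novikov's criterion and is a true martingale, so $\boldsymbol{\beta}+\lambda(\e)\int_0^\cdot\dot{\mathfrak{h}}^\e(s)\d s$ is a standard Brownian motion under the equivalent measure. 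Under that measure \eqref{CE1} reduces to the uncontrolled equation \eqref{4.2}, whose well-posedness is granted by Theorem \ref{thrm4.6}, and pulling back to $\P$ yields the unique $\L^p([0,1])$-valued $\mathscr{F}_t$-adapted continuous process $\mathrm{Z}_{\e,\mathfrak{h}^\e}(\cdot)$.

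For the moment bound \eqref{4p8}, write the right-hand side of \eqref{CE1} as $\frac{\alpha}{\delta+1}I_1+\beta I_2+I_3+I_4$, where $I_3$ is the stochastic integral with prefactor $1/\lambda(\e)$ and $I_4$ is the new control term. For $I_1$ and $I_2$, Taylor's formula gives
\begin{align*}
|\p(u_0+\sqrt{\e}\lambda(\e)\mathrm{Z}_{\e,\mathfrak{h}^\e})-\p(u_0)|\leq C\sqrt{\e}\lambda(\e)\bigl(|u_0|^\delta+|u_0+\sqrt{\e}\lambda(\e)\mathrm{Z}_{\e,\mathfrak{h}^\e}|^\delta\bigr)|\mathrm{Z}_{\e,\mathfrak{h}^\e}|,
\end{align*}
exactly as in \eqref{I14.5}, and the $1/(\sqrt{\e}\lambda(\e))$ prefactor cancels. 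Combined with the Green's function bounds \eqref{A1}, \eqref{A2}, \eqref{A7}, H\"older, and Minkowski as in \eqref{thrm4.71}-\eqref{thrm4.72} (with the stopping-time mechanism controlling $\|u_0+\sqrt{\e}\lambda(\e)\mathrm{Z}_{\e,\mathfrak{h}^\e}\|_{\L^p}$ on $[0,T\wedge\tau^{M,\e}]$ and Theorem \ref{thrmE2} for $u_0$), these produce integral inequalities of Gronwall type. The stochastic term $I_3$ is handled exactly as in \eqref{noiseEstimate}: BDG \eqref{18} in the UMD space $\L^p$, Hypothesis \ref{hyp1}, and $\sum_jq_j^2<\infty$ yield a bound of the form $C(p,K,M,T)/\lambda(\e)^p\leq C(p,K,M,T)$.

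The main new work is the control term $I_4=\sum_j\int_0^t\int_0^1G(t-s,\cdot,y)\sigma_j(s,y,u_0+\sqrt{\e}\lambda(\e)\mathrm{Z}_{\e,\mathfrak{h}^\e})\dot{\mathfrak{h}}_j^\e(s)\d y\d s$. Cauchy-Schwarz in $\ell^2$ gives
\begin{align*}
|I_4(t,x)|\leq \int_0^t\|\dot{\mathfrak{h}}^\e(s)\|_{\ell^2}\biggl(\sum_{j\in\N}\biggl|\int_0^1G(t-s,x,y)\sigma_j(s,y,u_0+\sqrt{\e}\lambda(\e)\mathrm{Z}_{\e,\mathfrak{h}^\e})\d y\biggr|^2\biggr)^{1/2}\d s.
\end{align*}
Since $\varphi_j$ is uniformly bounded by $\sqrt{2}$ and $\sum_jq_j^2<\infty$, Hypothesis \ref{hyp1} yields $\|\sigma(s,y,u)\|_{\ell^2}\leq CK(1+|u|)$. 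Pulling the $\ell^2$-norm inside the spatial integral by Minkowski, taking $\L^p$-norm in $x$, and then applying Cauchy-Schwarz in time with $\int_0^T\|\dot{\mathfrak{h}}^\e(s)\|_{\ell^2}^2\d s\leq N$, and bounding the Green's function convolution via \eqref{A1} and Young's inequality, we obtain $\E\bigl[\sup_{t\in[0,T\wedge\tau^{M,\e}]}\|I_4(t)\|_{\L^p}^p\bigr]\leq C(N,K,M,T)$ plus a Gronwall remainder. Assembling the four estimates and invoking Gronwall's inequality with the time-integrable singular kernels $(t-s)^{-1+1/(2p)}$ and $(t-s)^{-1/2+1/(2p)}$ delivers \eqref{4p8}, uniformly in $\e\in(0,1]$.

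Finally, \eqref{4p8.1} is deduced from \eqref{4p8} by replicating the argument for \eqref{4.4}: Markov's inequality combined with Lemma \ref{lem4.1.1} and the limit \eqref{ST} gives $\sup_\e\P(\sup_{[0,T]}\|\mathrm{Z}_{\e,\mathfrak{h}^\e}\|_{\L^p}>\rho)\leq C(p,K,M,T)/\rho^p+\sup_\e\P(\tau^{M,\e}<T)$; sending first $\rho\to\infty$ and then $M\to\infty$ finishes the proof. The main obstacle is the conspiracy between the $1/\lambda(\e)$ prefactor on $I_3$, the $\sqrt{\e}\lambda(\e)$ scaling inside $\p$, $\c$ and $g$, and the $\PP_2^N$-bound on the control: all must combine to produce $\e$-uniform estimates despite $\lambda(\e)\to\infty$. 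This works only because of the MDP scaling $\sqrt{\e}\lambda(\e)\to 0$, and it is the stopping-time machinery of Lemma \ref{lem4.1.1} that allows the polynomial nonlinearities to be closed in the Gronwall step without reintroducing $\e$-dependence.
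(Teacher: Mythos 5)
Your proposal follows essentially the same route as the paper: Girsanov's theorem transfers well-posedness from Theorem \ref{thrm4.6}, the drift and noise terms $I_1,I_2,I_3$ are handled exactly as in \eqref{thrm4.71}--\eqref{thrm4.73}, and the new control term $I_4$ is bounded via Cauchy--Schwarz together with the $\PP_2^N$-bound $\int_0^T\|\dot{\mathfrak{h}}^\e(s)\|_{\ell^2}^2\,\d s\leq N$, yielding a Gronwall closure. The only technical difference is in how you estimate the $\ell^2$-mass of $\sigma$: you use the uniform pointwise bound $|\varphi_j(y)|\leq\sqrt{2}$ to get $\|\sigma(s,y,u)\|_{\ell^2}\leq CK(1+|u|)$ before integrating in $y$, whereas the paper in \eqref{4p21.1} first estimates each $j$-th convolution separately (Green's function bound, then $\|g\,\varphi_j\|_{\L^1}\leq\|g\|_{\L^2}\|\varphi_j\|_{\L^2}$ using orthonormality $\|\varphi_j\|_{\L^2}=1$), and then applies Cauchy--Schwarz over $j$. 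Both variants exploit $\sum_j q_j^2<\infty$ and give the same uniform estimate; yours relies on uniform boundedness of the eigenfunctions, which happens to hold in this 1D Dirichlet setting, while the paper's version relies only on orthonormality and would transfer more readily to domains where $\{\varphi_j\}$ are orthonormal but not uniformly bounded. Aside from this cosmetic distinction the argument is the same.
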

			
			\begin{proof}
				The proof of this proposition can be obtained with the help of Girsanov's theorem (\cite[Theorem 10.14]{DaZ}) in the following way:
				
				
				For any fixed $\mathfrak{h} \in\PP_2^N$, we define 
				\begin{align*}
					\frac{\d \vi{\P}}{\d \P}:=\exp\bigg\{-\lambda(\e)\int_0^T\dot{\mathfrak{h}}(s) \d\boldsymbol{\beta}(s)-\frac{\lambda^2(\e)}{2}\int_0^T\|\dot{\mathfrak{h}}(s)\|_{\ell^2}^2\d s\bigg\},
				\end{align*}
				where
				the stochastic integral $\displaystyle\int_0^t\dot{\mathfrak{h}}(s)\d\boldsymbol{\beta}( s)=\sum\limits_{j\in\mathbb{N}}\int_0^t\dot{\mathfrak{h}}_j(s)\d\beta_j(s)$, for all $t\in[0,T]$. 
				Using Novikov's condition, one can show that 
				\begin{align*}
					\exp\bigg\{-\lambda(\e)\int_0^T\dot{\mathfrak{h}}(s)\d\boldsymbol{\beta}( s)-\frac{\lambda^2(\e)}{2}\int_0^T{\|\dot{\mathfrak{h}}(s)\|_{\ell^2}^2}\d s\bigg\},
				\end{align*}is an exponential martingale (Dol\'eans-Dade exponential), $\vi{\P}$ is an another probability measure on the probability space $(\Omega,\mathscr{F},\{\mathscr{F}_t\}_{t\geq 0},\P)$.  The measure $\vi{\P}$ is equivalent to the probability measure $\P$.
				
				Define a process 
				\begin{align}\label{GT}
					\vi{\boldsymbol{\beta}}(t):= \boldsymbol{\beta}(t)+\lambda(
					\e)\int_0^t\dot{\mathfrak{h}}(s)\d s, \ \ t\in[0,T]. 
				\end{align}
				Then, with the help of Girsanov's theorem, we determine that $\vi{\boldsymbol{\beta}}(\cdot)$ becomes an infinite-dimensional Wiener process under the new probability measure $\vi{\P}$. Due to the  equivalency of  the measures $\vi{\P}$ and $\P$, we find that $\vi{\boldsymbol{\beta}}(\cdot)$ is also an infinite-dimensional  Wiener process with respect to ${\P}$. On the probability space $(\Omega,\mathscr{F},\{\mathscr{F}_t\}_{t\geq 0},\vi{\P})$,  Theorem \ref{thrm4.6} ensures both the existence and uniqueness of the solution of the SCI equation \eqref{CE1} under the newly constructed probability measure $\vi{\P}$. Consequently, this also confirms the well-posedness of the  SCI equation \eqref{CE1} with respect to the probability measure $\P$.

				Next, our aim is to show the estimate \eqref{4p8}. In view of the estimate \eqref{43}, it is sufficient to prove the estimate for the final term appearing in the right-hand side of \eqref{CE1}, which can be handled in the following way: 
				\begin{align}\label{4p21.1}\nonumber
					&\E\bigg[\sup_{t\in[0,T\wedge \tau^{M,\e}]}\bigg\|\sum_{j\in\N}
					\int_0^t\int_0^1G(t-s,\cdot,y)\sigma_j\big(s,y,(u_0+\sqrt{\e} \lambda(\e)	\mathrm{Z}_{\e,\mathfrak{h}^\e})(s,y)\big)\dot{\mathfrak{h}}_j^\e(s)\d y\d s\bigg\|_{\L^p}^p\bigg]\\&\nonumber\leq 
					\E\bigg[\bigg(\sum_{j\in\N}\int_0^{T\wedge \tau^{M,\e}}(t-s)^{-\frac{1}{2}+\frac{1}{2p}}\|g(s,(u_0+\sqrt{\e} \lambda(\e)	\mathrm{Z}_{\e,\mathfrak{h}^\e})(s))\|_{\L^2}\|\varphi_j\|_{\L^2}q_j |\dot{\mathfrak{h}}_j^\e(s)|\d s\bigg)^p\bigg]\\\nonumber& 
					\leq 
					C\E\bigg[\bigg\{\bigg(	\sum_{j\in\N}\int_0^{T\wedge \tau^{M,\e}}(t-s)^{-1+\frac{1}{p}}\|g(s,(u_0+\sqrt{\e} \lambda(\e)	\mathrm{Z}_{\e,\mathfrak{h}^\e})(s))\|_{\L^2}^2\|\varphi_j\|_{\L^2}^2q_j^2\d s\bigg)^{\frac{1}{2}}\\&\qquad\times\nonumber\bigg(\sum_{j\in\N}\int_0^t(\dot{\mathfrak{h}}_j^\e)^2(s)\d s\bigg)^{\frac{1}{2}}\bigg\}^p\bigg]
					\\&\nonumber=C
					\E\bigg[\bigg\{\bigg(\sum_{j\in\N}q_j^2\bigg)^{\frac{1}{2}}\left(\int_0^{T\wedge \tau^{M,\e}}(t-s)^{-1+\frac{1}{p}}\|g(s,(u_0+\sqrt{\e} \lambda(\e)	\mathrm{Z}_{\e,\mathfrak{h}^\e})(s))\|_{\L^2}^2\d s\right)^{1/2}\\&\nonumber\qquad\times\left(\int_0^t\|\dot{\mathfrak{h}}^\e(s)\|_{\ell^2}^2\d s\right)^{1/2}\bigg\}^p\bigg] \\&\nonumber \leq C(N)	\E\bigg[\bigg(\int_0^{T\wedge \tau^{M,\e}}(t-s)^{-1+\frac{1}{p}}\|g(s,(u_0+\sqrt{\e} \lambda(\e)	\mathrm{Z}_{\e,\mathfrak{h}^\e})(s))\|_{\L^2}^2\d s\bigg)^{p/2}\bigg]\\&\nonumber\leq C(p,K,N,T)\E\bigg[\int_0^{T\wedge \tau^{M,\e}}(t-s)^{-1+\frac{1}{p}}\big(1+\|u_0+\sqrt{\e} \lambda(\e)	\mathrm{Z}_{\e,\mathfrak{h}^\e}(s))\|_{\L^p}^p\big)\d s\bigg]\\& \leq C(p,K,N,T)\bigg\{1+\sup_{s\in[0,T]}\|u_0(s)\|_{\L^p}^p+(\sqrt{\e}\lambda(\e))^{p}\E\bigg[\int_0^{T\wedge \tau^{M,\e}}\|\Z_{\e,\mathfrak{h}^\e}(s)\|_{\L^p}^p\d s\bigg]\bigg\},
				\end{align}where we have used the fact that $\sum\limits_{j\in\N}q_j^2<\infty$ and $\{\varphi_j\}_{j\in\N}$ is an orthonormal basis. The final estimate \eqref{4p8} can be obtained with the help of \eqref{thrm4.71}, \eqref{thrm4.72}, \eqref{thrm4.73} and \eqref{4p21.1} followed by an application of Gronwall's inequality.
				
				We are not providing a proof of \eqref{4p8.1} as it will follow on the same lines as in the proof of \eqref{4.4}.
			\end{proof}

			\begin{proposition}\label{prop4.7}
				Assume that Hypothesis \ref{hyp1} holds, and {$\mathfrak{h}\in \mathcal{H}$}. Then, for any initial data $u^0\in\L^p([0,1])$, for $p\geq 2\delta+1$ , there exists a unique solution $\mathrm{Z}_\mathfrak{h}\in \C([0,T];\L^p([0,1]))$ to the skeleton integral equation \eqref{CE2} such that 
				\begin{align}\label{4p9}
					\sup_{t\in[0,T]}\|\mathrm{Z}_\mathfrak{h}(t)\|_{\L^p}^p\leq C<\infty.
				\end{align}
			\end{proposition}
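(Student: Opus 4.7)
The skeleton equation \eqref{CE2} is linear in $\mathrm{Z}_\mathfrak{h}$ with a deterministic, control-driven inhomogeneity
\[
F_\mathfrak{h}(t,x):=\sum_{j\in\N}\int_0^t\int_0^1 G(t-s,x,y)\sigma_j(s,y,u_0(s,y))\dot{\mathfrak{h}}_j(s)\,\d y\,\d s.
\]
My plan is to treat it as a linear Volterra integral equation on the Banach space $\C([0,T];\L^p([0,1]))$: first show $F_\mathfrak{h}$ lies in that space, then solve the fixed-point problem $\mathrm{Z}=\mathcal{A}\mathrm{Z}+F_\mathfrak{h}$ by a Picard argument on short intervals, and finally deduce \eqref{4p9} and uniqueness from a singular-kernel Gronwall inequality.

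First, I would establish the deterministic bound $\sup_{t\in[0,T]}\|F_\mathfrak{h}(t)\|_{\L^p}^p\leq C(K,T,p,u_0,\|\mathfrak{h}\|_{\mathcal H})$, which is a strict simplification of \eqref{4p21.1}: Cauchy--Schwarz in $(s,j)$ separates the kernel from $\dot{\mathfrak{h}}$, the $\L^p$ bound \eqref{A1} for $G$ controls the Green factor, the summability $\sum_{j\in\N} q_j^2<\infty$ together with the orthonormality of $\{\varphi_j\}$ handles the sum, the linear growth from Hypothesis \ref{hyp1} controls $g(s,u_0)$, and the deterministic estimate \eqref{E31} disposes of $u_0$. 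Since there is no stochastic integral here and no $\sqrt{\e}\lambda(\e)$ factor, no BDG step is required.

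Next, I would treat the linear operator
\[
\mathcal{A}\mathrm{Z}(t,x)=\frac{\alpha}{\delta+1}\int_0^t\!\!\int_0^1\frac{\partial G}{\partial y}(t-s,x,y)\p'(u_0(s,y))\mathrm{Z}(s,y)\,\d y\,\d s+\beta\int_0^t\!\!\int_0^1 G(t-s,x,y)\c'(u_0(s,y))\mathrm{Z}(s,y)\,\d y\,\d s,
\]
mimicking the treatment of $I_1$ and $I_2$ in Proposition \ref{prop1} but without expectations or stopping times. Combining \eqref{A1}, \eqref{A2}, Young's convolution inequality and H\"older's inequality yields
\[
\|\mathcal{A}\mathrm{Z}(t)\|_{\L^p}\leq C\int_0^t\Big\{(t-s)^{-1+\frac{1}{2p}}\|\p'(u_0(s))\|_{\L^{\frac{p}{p-1}}}+(t-s)^{-\frac12+\frac{1}{2p}}\|\c'(u_0(s))\|_{\L^{\frac{p}{p-1}}}\Big\}\|\mathrm{Z}(s)\|_{\L^p}\,\d s.
\]
The assumption $p\geq 2\delta+1$ is exactly what ensures that both $\p'(u_0)=(\delta+1)u_0^\delta$ and $\c'(u_0)=-\gamma+(1+\gamma)(\delta+1)u_0^\delta-(2\delta+1)u_0^{2\delta}$ belong to $\L^\infty(0,T;\L^{p/(p-1)})$ by \eqref{E31}. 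Both kernels are integrable on $[0,T]$, so $\mathcal{A}$ is a contraction on $\C([0,\tau];\L^p([0,1]))$ for a sufficiently small $\tau>0$, which gives a unique local fixed point; patching finitely many intervals extends it to $[0,T]$. Applying the same inequality to $\mathrm{Z}_\mathfrak{h}$ itself together with the bound on $F_\mathfrak{h}$ and invoking a singular-kernel Gronwall inequality yields \eqref{4p9}, while uniqueness follows by applying the same argument to the difference of two solutions, which satisfies the homogeneous version of \eqref{CE2}.

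The main obstacle is the term $(2\delta+1)u_0^{2\delta}$ sitting inside $\c'(u_0)$: its $\L^{p/(p-1)}$ norm is precisely what forces the threshold $p\geq 2\delta+1$, exactly as in the CLT argument of Section \ref{Sec3}. Once that integrability is in place, the deterministic and linear character of the skeleton equation removes every remaining difficulty, so the proof boils down to reusing the Green-function machinery already developed for Proposition \ref{prop1}.
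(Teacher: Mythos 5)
Your proposal is correct and mirrors the paper's argument: both bound the three pieces of \eqref{CE2} via the Green's-function estimates \eqref{A1}, \eqref{A2}, \eqref{A7}, a Cauchy--Schwarz step in $(s,j)$ for the control term, and a singular-kernel Gronwall inequality, with the threshold $p\geq 2\delta+1$ coming from the $u_0^{2\delta}$ factor in $\c'(u_0)$. The only differences are cosmetic: you apply H\"older with the dual split $\L^{p/(p-1)}\times\L^p$ where the paper uses $\L^{p/\delta}\times\L^{p/(p-\delta)}$ and $\L^{p/(2\delta)}\times\L^{p/(p-2\delta)}$ (equivalent, with the same integrability requirement), and you spell out the Picard fixed-point iteration for existence and uniqueness, which the paper delegates to a citation of \cite{AKMTM3}.
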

			\begin{proof}
				The proof of this proposition is similar to the proof of  \cite[Propositions 3.3 and 3.5]{AKMTM3}. In order to complete the proof of this proposition, we only need to establish the estimate \eqref{4p9}. 
				
				\vspace{2mm}
				\noindent
				\emph{Proof of \eqref{4p9}.} Taking $\L^p$-norm on both sides of  \eqref{CE2}, we find for all $t\in[0,T]$
				\begin{align}\label{prop4.90}
					\nonumber
					\|	\mathrm{Z}_\mathfrak{h}(t)\|_{\L^p}&=\frac{\alpha}{\delta+1}\bigg\|\int_0^t\int_0^1\frac{\partial G}{\partial y}(t-s,\cdot,y)\p'(u_{0}(s,y))\mathrm{Z}_\mathfrak{h}(s,y)\d y\d s\bigg\|_{\L^p}\\&\nonumber\quad +\beta\bigg\|\int_0^t\int_0^1G(t-s,\cdot,y)\c'(u_{0}(s,y))\mathrm{Z}_\mathfrak{h}(s,y)\d y\d s\bigg\|_{\L^p}\\&\quad+\bigg\|\sum_{j\in\N}
					\int_0^t\int_0^1G(t-s,\cdot,y)\sigma_j(s,y,u_0(s,y))\dot{\mathfrak{h}}_j(s)\d y\d s\bigg\|_{\L^p}
					\nonumber\\&=:\frac{\alpha}{\delta+1}I_1(t)+\beta I_2(t)+I_3(t).
				\end{align}Let us consider the $I_1$ and estimate it using the estimates \eqref{A2}, \eqref{A7}, Young's and H\"older's inequalities, the embedding $\L^p([0,1])\subset \L^{\frac{p}{p-\delta}}([0,1])$, for $p>\delta$,  and \eqref{E31}, as
				\begin{align}\label{prop4.91}\nonumber
					I_1(t)&\leq C\int_0^t(t-s)^{-1+\frac{1}{2p}}\|u_0^\delta(s)\Z_\mathfrak{h} (s)\|_{\L^1}\d s \\&\nonumber
					\leq C\int_0^t(t-s)^{-1+\frac{1}{2p}}\|u_0^\delta(s)\|_{\L^\frac{p}{\delta}}\|\Z_\mathfrak{h}(s)\|_{\L^\frac{p}{p-\delta}}\d s\\&\leq 
					C\sup_{s\in[0,t]}\|u_0(s)\|_{\L^p}^\delta \int_0^t(t-s)^{-1+\frac{1}{2p}}\|\Z_\mathfrak{h}(s)\|_{\L^p}\d s.
				\end{align}Let us re-write the term $I_2$ appearing in the right hand side of \eqref{prop4.90} as
				\begin{align}\label{prop4.92}\nonumber
					I_2(t)&=  \bigg\|\int_0^t\int_0^1 G(t-s,\cdot,y)\big(-\gamma+(1+\gamma)(1+\delta)u^\delta(s)-(2\delta+1)u^{2\delta}(s)\big)\Z_\mathfrak{h}(s)\d y\d s\bigg\|_{\L^p}\\&\nonumber\leq
					\gamma \bigg\|\int_0^t\int_0^1 G(t-s,\cdot,y)\Z_\mathfrak{h}(s)\d y\d s\bigg\|_{\L^p}\\&\nonumber\quad +(1+\gamma)(1+\delta)\bigg\|\int_0^t\int_0^1 G(t-s,\cdot,y)u^\delta(s)\Z_\mathfrak{h}(s)\d y\d s\bigg\|_{\L^p}\\&\nonumber\quad +(2\delta+1)\bigg\|\int_0^t\int_0^1 G(t-s,\cdot,y)u^{2\delta}(s)\Z_\mathfrak{h}(s)\d y\d s\bigg\|_{\L^p}\\&=: \gamma I_{21}(t)+(1+\gamma)(1+\delta)I_{22}(t)+(2\delta+1)I_{23}(t).
				\end{align}We are providing an estimate  for $I_{23}$ (considering the highest order non-linearity) only, rest of the terms $I_{21}$ and $I_{22}$ can be handled in a similar way. For the term $I_{23}$, we use similar arguments to \eqref{prop4.91}, and estimate it for $p\geq 2\delta+1$, as
				\begin{align}\label{prop4.93}\nonumber
					I_{23}(t)&\leq C\int_0^t(t-s)^{-\frac{1}{2}+\frac{1}{2p}}\|u_0^{2\delta}(s)\Z_{\mathfrak{h}}(s)\|_{\L^p}\d s\\&\nonumber\leq C\int_0^t (t-s)^{-\frac{1}{2}+\frac{1}{2p}}\|u_0^{2\delta}(s)\|_{\L^\frac{p}{2\delta}}\|\Z_\mathfrak{h}(s)\|_{\L^{\frac{p}{p-2\delta}}}\d s \\&\leq C \sup_{s\in[0,t]}\|u_0(s)\|_{\L^p}^{2\delta}\int_0^t(t-s)^{-\frac{1}{2}+\frac{1}{2p}}\|\Z_{\mathfrak{h}}(s)\|_{\L^p}\d s.
				\end{align}Using the fact that $\mathfrak{h}\in\mathcal{H}$, we estimate the final term $I_3$ as
				\begin{align}\label{prop4.94}\nonumber
					I_3(t)& \nonumber\leq C
					\sum_{j\in\N}\int_0^t(t-s)^{\frac{1}{2p}-\frac{1}{2}}\|g(s,u_{0}(s))\varphi_j\|_{\L^1}|q_j| |\dot{\mathfrak{h}}_j(s)|\d s
					\\&\nonumber \leq C
					\sum_{j\in\N}\int_0^t(t-s)^{\frac{1}{2p}-\frac{1}{2}}\|g(s,u_{0}(s))\|_{\L^2}\|\varphi_j\|_{\L^2}|q_j| |\dot{\mathfrak{h}}_j(s)|\d s
					\\&\nonumber \leq 
					C\bigg(	\sum_{j\in\N}\int_0^t(t-s)^{\frac{1}{p}-1}\|g(s,u_{0}(s))\|_{\L^2}^2\|\varphi_j\|_{\L^2}^2q_j^2\d s\bigg)^{\frac{1}{2}}\bigg(\sum_{j\in\N}\int_0^t\dot{\mathfrak{h}}_j^2(s)\d s\bigg)^{\frac{1}{2}}
					\\&\nonumber=C
					\bigg(\sum_{j\in\N}q_j^2\bigg)^{\frac{1}{2}}\left(\int_0^t(t-s)^{\frac{1}{p}-1}\|g(s,u_{0}(s))\|_{\L^2}^2\d s\right)^{1/2}\left(\int_0^t\|\dot{\mathfrak{h}}(s)\|_{\ell^2}^2\d s\right)^{1/2}\nonumber\\&\nonumber\leq CK\|\mathfrak{h}\|_{\mathcal{H}}\left(\int_0^t(t-s)^{\frac{1}{p}-1}\big(1+\|u_{0}(s)\|_{\L^2}\big)^2\d s\right)^{1/2}\\&\leq CKT^{\frac{1}{2p}}\|\mathfrak{h}\|_{\mathcal{H}}\sup_{s\in[0,t]}\big\{1+\|u_{0}(s)\|_{\L^2}\big\},
				\end{align}where we have used the fact that $\sum\limits_{j\in\N}q_j^2<\infty$ and $\{\varphi_j\}_{j\in\N}$ is an orthonormal basis.
				
				Substituting \eqref{prop4.91}, \eqref{prop4.92}, \eqref{prop4.93} and \eqref{prop4.94} in \eqref{prop4.90}, and an application of Gronwall's inequality in the resultant leads to the required result \eqref{4p9}.
			\end{proof}
			Now, we are in a position to state and prove our main result of this section.
			\begin{theorem}\label{MDPthm}
				Assume that  Hypothesis \ref{hyp1} holds. Then, for any $p> \max\{6, 2\delta+1\}$, the sequence of  processes $\{\mathrm{Z}_\e\}_{\e>0}$ defined in \eqref{4.2}  satisfies an LDP on $\C([0,T];\L^p([0,1]))$ with the speed $\lambda(\e)$ and the rate  function $I$ given by \eqref{CE3}.
			\end{theorem}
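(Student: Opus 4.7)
My plan is to apply Theorem \ref{thrmLDP}, which reduces establishing the LDP for $\{\Z_\e\}_{\e>0}$ to verifying the two parts of Condition \ref{Cond2} with $\EE = \C([0,T];\L^p([0,1]))$ and $\EE_0 = \L^p([0,1])$. I take $\mathscr{G}_\e$ to be the solution map of the controlled equation \eqref{CE1} (well-defined via the Girsanov argument in Proposition \ref{prop4.6}), and $\mathscr{G}_0$ to be the solution map of the skeleton equation \eqref{CE2} (well-posed by Proposition \ref{prop4.7}); the rate function is then precisely \eqref{CE3}. The uniform bounds \eqref{4p8} and \eqref{4p9} give the a priori control needed for both verifications.

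\emph{Verification of Condition \ref{Cond2}(1).} Fix $N \in \N$, let $u^0_m \to u^0$ in $\L^p([0,1])$ and $\mathfrak{h}_m \xrightarrow{w} \mathfrak{h}$ in $\UU^N$. Since \eqref{CE2} is \emph{linear} in $\Z_\mathfrak{h}$ with coefficients depending only on $u_0$, the continuity of $u^0 \mapsto u_0$ in $\C([0,T];\L^p([0,1]))$ from Theorem \ref{thrmE2} reduces everything to showing that the linear operator
\[
\mathfrak{h} \;\longmapsto\; \sum_{j\in\N}\int_0^{\cdot}\int_0^1 G(\cdot-s,\cdot,y)\sigma_j(s,y,u_0(s,y))\dot{\mathfrak{h}}_j(s)\,\d y\,\d s
\]
is compact from $\mathcal{H}$ into $\C([0,T];\L^p([0,1]))$. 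This follows from the smoothing effect of the heat kernel via \eqref{A1}, \eqref{A7}, the bound $\sum_j q_j^2<\infty$, and the linear growth of $g$, along the same lines as the estimate \eqref{prop4.94}. Weak convergence in $\UU^N$ is therefore sent to strong convergence in $\C([0,T];\L^p([0,1]))$, and combining this with a Gronwall argument for the drift terms involving $\p'(u_0)$ and $\c'(u_0)$ (as in Proposition \ref{prop4.7}) delivers continuity of $(u^0,\mathfrak{h}) \mapsto \Z_\mathfrak{h}$ and thus compactness of $\mathrm{K}_N$.

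\emph{Verification of Condition \ref{Cond2}(2).} Let $\mathfrak{h}^\e \in \PP_2^N$ converge in distribution to $\mathfrak{h}$ as $\UU^N$-valued random elements, and $u^0_\e \to u^0$ in $\EE_0$. I would set $\mathrm{Y}_\e := \Z_{\e,\mathfrak{h}^\e} - \Z_{\mathfrak{h}^\e}$, subtract \eqref{CE2} with control $\mathfrak{h}^\e$ from \eqref{CE1}, and Taylor-expand the nonlinear terms as in \eqref{CLT2}--\eqref{CLT3}:
\begin{align*}
\frac{1}{\sqrt{\e}\lambda(\e)}\big(\p(u_0 + \sqrt{\e}\lambda(\e)\Z_{\e,\mathfrak{h}^\e}) - \p(u_0)\big) &= \p'(u_0)\Z_{\e,\mathfrak{h}^\e} + R_\e^{\p}, \\
\frac{1}{\sqrt{\e}\lambda(\e)}\big(\c(u_0 + \sqrt{\e}\lambda(\e)\Z_{\e,\mathfrak{h}^\e}) - \c(u_0)\big) &= \c'(u_0)\Z_{\e,\mathfrak{h}^\e} + R_\e^{\c},
\end{align*}
where the remainders $R_\e^{\p}, R_\e^{\c}$ carry strictly positive powers of $\sqrt{\e}\lambda(\e)$ and vanish in probability on $[0,T\wedge\tau^{M,\e}]$ via the uniform bound \eqref{4p8}. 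The stochastic integral in \eqref{CE1} carries the prefactor $\frac{1}{\lambda(\e)}$, so a BDG estimate identical in structure to \eqref{noiseEstimate} gives
\[
\E\bigg[\sup_{t\in[0,T\wedge\tau^{M,\e}]}\bigg\|\frac{1}{\lambda(\e)}\sum_{j\in\N}\int_0^t\int_0^1 G(t-s,\cdot,y)\sigma_j(s,y,(u_0+\sqrt{\e}\lambda(\e)\Z_{\e,\mathfrak{h}^\e})(s,y))\,\d\beta_j(s)\,\d y\bigg\|_{\L^p}^p\bigg]\leq \frac{C(M,T,K)}{\lambda(\e)^p} \to 0.
\]
The control integral in \eqref{CE1}, after replacing $u_0+\sqrt{\e}\lambda(\e)\Z_{\e,\mathfrak{h}^\e}$ by $u_0$ (the error being controlled by the Lipschitz property of $g$, \eqref{4p8} and $\sqrt{\e}\lambda(\e)\to 0$), reduces to the compact linear map handled in part (1) applied to $\mathfrak{h}^\e$. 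Localizing via the stopping time $\tau^{M,\e}$ from \eqref{stoppingtime}, applying Gronwall's inequality to the resulting linear integral inequality for $\E[\sup\|\mathrm{Y}_\e\|_{\L^p}]$, and removing the localization via Lemma \ref{lem4.1.1} together with \eqref{ST} yields $\mathrm{Y}_\e \to 0$ in probability in $\C([0,T];\L^p([0,1]))$. Coupling this with $\Z_{\mathfrak{h}^\e}\Rightarrow \Z_\mathfrak{h}$ from part (1) and the continuous mapping theorem gives $\Z_{\e,\mathfrak{h}^\e}\Rightarrow \Z_\mathfrak{h}$, verifying (2) and completing the proof.

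The main obstacle I anticipate is the joint control of the polynomial nonlinearities of order up to $2\delta+1$ and the infinite-dimensional multiplicative noise with linear growth inside the Taylor remainders $R_\e^{\p}, R_\e^{\c}$, since these remainders involve mixed polynomial combinations of $u_0$ and $\Z_{\e,\mathfrak{h}^\e}$ of high degree. Passing to the limit uniformly in $\e$ forces simultaneous use of the refined $\L^p$ a priori bound \eqref{4p8}, the Green's function estimates from Appendix \ref{SUR}, the UMD Banach space BDG inequality \eqref{18}, and the stopping time technique of Lemma \ref{lem4.1.1}, in close parallel with the CLT argument of Section \ref{Sec3}.
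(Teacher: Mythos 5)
Your proposal follows the paper's strategy exactly at the top level: invoke Theorem~\ref{thrmLDP}, identify $\mathscr{G}_\e$ and $\mathscr{G}_0$ with the solution maps of \eqref{CE1} and \eqref{CE2}, and reduce to verifying Condition~\ref{Cond2}(1)--(2), which the paper accomplishes in Lemmas~\ref{compactness} and \ref{WC}. Your treatment of Condition~\ref{Cond2}(1) matches the paper's: linearity of the skeleton equation, compactness of the stochastic-integral-free control operator (via the fact that Hilbert--Schmidt operators map weak to strong convergence), and a Gronwall closure.

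For Condition~\ref{Cond2}(2) you take a genuinely different, and arguably cleaner, route. The paper passes to a Skorokhod representation so that $\widetilde{\mathfrak{h}}^\e \to \widetilde{\mathfrak{h}}$ weakly $\widetilde\P$-a.s., and then estimates the single difference $\widetilde{\Z}_{\e,\widetilde{\mathfrak{h}}^\e}-\widetilde{\Z}_{\widetilde{\mathfrak{h}}}$ directly in \eqref{4.14}; the term $J_4^\e$ there has to absorb both the $\e$-dependence of $\sigma$ and the difference $\dot{\widetilde{\mathfrak{h}}}^\e-\dot{\widetilde{\mathfrak{h}}}$. You instead split $\Z_{\e,\mathfrak{h}^\e}-\Z_{\mathfrak{h}}=(\Z_{\e,\mathfrak{h}^\e}-\Z_{\mathfrak{h}^\e})+(\Z_{\mathfrak{h}^\e}-\Z_{\mathfrak{h}})$, prove the first piece vanishes in probability by a Gronwall argument in which the control integrals (evaluated at the same $\dot{\mathfrak{h}}^\e$) cancel up to a Lipschitz error of size $\sqrt{\e}\lambda(\e)$, and handle the second piece by applying part~(1) plus the continuous mapping theorem. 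This avoids Skorokhod at the cost of appealing to the continuity of $\mathscr{G}_0$ established in part~(1) once more; both routes deliver the same conclusion by a Slutsky-type argument. One technical point you should repair: the localization should be done with a stopping time that directly controls $\|\Z_{\e,\mathfrak{h}^\e}(t)\|_{\L^p}$ (and $\|\Z_\mathfrak{h}(t)\|_{\L^p}$), as in the paper's $\tau^{\rho,\e}$ from \eqref{435}, rather than the $\tau^{M,\e}$ of \eqref{stoppingtime}, which caps $\|u_\e(t)\|_{\L^p}$; the Taylor remainders $R_\e^{\p}, R_\e^{\c}$ carry powers of $\Z_{\e,\mathfrak{h}^\e}$ up to $2\delta+1$ and require a pathwise a priori bound on $\Z_{\e,\mathfrak{h}^\e}$ itself, not just the moment estimate \eqref{4p8}. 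With that fix the argument closes.
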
In order to prove our main Theorem \ref{MDPthm}, it is sufficient to verify the Condition \ref{Cond2}. In the following result, we provide a proof of  verification of the Condition \ref{Cond2} (1).
			
			\begin{lemma}[Compactness]\label{compactness}
				The set $\mathrm{K}_N=\left\{\mathscr{G}_0\left(u^0,\int_0^{\cdot}\dot{\mathfrak{h}}(s)\d s\right): \mathfrak{h}\in\UU^N \right\}$ is a compact subset of $\EE.$
			\end{lemma}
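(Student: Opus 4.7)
The plan is to prove sequential compactness. Take an arbitrary sequence $\{\mathfrak{h}_m\}_{m\geq 1}\subset \UU^N$. Since $\UU^N$ is the closed ball of radius $\sqrt{N}$ in the separable Hilbert space $\mathcal{H}$ endowed with its weak topology, Banach-Alaoglu gives a subsequence (not relabeled) with $\mathfrak{h}_m\rightharpoonup \mathfrak{h}^\ast$ weakly in $\mathcal{H}$ for some $\mathfrak{h}^\ast\in\UU^N$. Writing $\mathrm{Z}_m:=\mathscr{G}_0\big(u^0,\int_0^{\cdot}\dot{\mathfrak{h}}_m(s)\d s\big)$ and $\mathrm{Z}^\ast:=\mathscr{G}_0\big(u^0,\int_0^{\cdot}\dot{\mathfrak{h}}^\ast(s)\d s\big)$, the goal reduces to showing $\mathrm{Z}_m\to \mathrm{Z}^\ast$ in $\EE=\C([0,T];\L^p([0,1]))$, for then every subsequence of the original sequence has a further subsequence with the same limit, proving relative compactness of $\mathrm{K}_N$.

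Using the skeleton equation \eqref{CE2} for $\mathrm{Z}_m$ and $\mathrm{Z}^\ast$, the difference decomposes as the sum of three terms: two linear drift contributions $\frac{\alpha}{\delta+1}\int_0^t\int_0^1 \partial_y G(t-s,x,y)\p'(u_0)(\mathrm{Z}_m-\mathrm{Z}^\ast)\d y\d s$ and $\beta\int_0^t\int_0^1 G(t-s,x,y)\c'(u_0)(\mathrm{Z}_m-\mathrm{Z}^\ast)\d y\d s$, plus a control-dependent term
\begin{align*}
\Theta_m(t,x):=\sum_{j\in\N}\int_0^t\int_0^1 G(t-s,x,y)\sigma_j(s,y,u_0(s,y))\big(\dot{\mathfrak{h}}_{m,j}(s)-\dot{\mathfrak{h}}^\ast_j(s)\big)\d y\d s.
\end{align*}
The two drift terms are handled exactly as in Theorem \ref{thrm2.2}: using the kernel bounds \eqref{A1}-\eqref{A2} and Young's/H\"older's inequalities, together with the $\L^p$-bound \eqref{E31} of $u_0$ controlling $\|\c'(u_0(s))\|_{\L^{p/(p-1)}}$ and $\|\p'(u_0(s))\|_{\L^{p/(p-1)}}$, they produce a Volterra-type inequality
\begin{align*}
\|\mathrm{Z}_m(t)-\mathrm{Z}^\ast(t)\|_{\L^p}\leq C(u_0,\delta,\gamma,T)\int_0^t\big\{(t-s)^{-1+\frac{1}{2p}}+(t-s)^{-\frac{1}{2}+\frac{1}{2p}}\big\}\|\mathrm{Z}_m(s)-\mathrm{Z}^\ast(s)\|_{\L^p}\d s+\|\Theta_m\|_{\C([0,T];\L^p)},
\end{align*}
so a Gronwall argument reduces everything to proving $\|\Theta_m\|_{\C([0,T];\L^p)}\to 0$ as $m\to\infty$.

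The main obstacle is precisely this last convergence, since we only have weak, not strong, convergence of $\dot{\mathfrak{h}}_m$. My plan has two steps. First, for each fixed $(t,x)\in[0,T]\times[0,1]$, the map
\begin{align*}
\mathfrak{h}\longmapsto \sum_{j\in\N}\int_0^t\Big(\int_0^1 G(t-s,x,y)\sigma_j(s,y,u_0(s,y))\d y\Big)\dot{\mathfrak{h}}_j(s)\d s
\end{align*}
is a bounded linear functional on $\mathcal{H}$ by Cauchy-Schwarz, using $\sum_{j\in\N}q_j^2<\infty$, Hypothesis \ref{hyp1}, \eqref{E31}, and the $\L^2$ estimate of $G(t-s,x,\cdot)$ coming from \eqref{A1}; hence $\Theta_m(t,x)\to 0$ pointwise by the weak convergence $\mathfrak{h}_m\rightharpoonup \mathfrak{h}^\ast$. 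Second, to upgrade to uniform convergence in $\EE$, I would establish that $\{\Theta_m\}_{m\geq 1}$ is relatively compact in $\C([0,T];\L^p)$ via Arzel\`a-Ascoli: uniform boundedness in $\L^p$ follows from a computation analogous to \eqref{4p21.1} (replacing the integrand estimate by $\|\dot{\mathfrak{h}}_m-\dot{\mathfrak{h}}^\ast\|_{\L^2([0,T];\ell^2)}\leq 2\sqrt{N}$), and equicontinuity in time is obtained from the H\"older regularity of the heat kernel $G$ (standard estimates $\|G(t+h,x,\cdot)-G(t,x,\cdot)\|_{\L^2}\lesssim h^\alpha$ for some $\alpha>0$ on $(0,T]$) combined once more with Cauchy-Schwarz in the control variable. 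Any cluster point must agree with the pointwise limit $0$, forcing the whole sequence to converge to $0$ in $\EE$, which completes the proof of compactness.
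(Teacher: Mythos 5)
Your overall strategy matches the paper's: reduce compactness of $\mathrm{K}_N$ to sequential continuity of $\mathscr{G}_0$ on $\UU^N$ equipped with the weak topology (Banach--Alaoglu), decompose $\mathrm{Z}_{\mathfrak{h}_m}-\mathrm{Z}_{\mathfrak{h}^\ast}$ into the two drift contributions plus the control term $\Theta_m$, and close with Gronwall after showing $\Theta_m\to 0$ in $\C([0,T];\L^p)$. The drift estimates are identical. Where you genuinely diverge is the treatment of $\Theta_m$: the paper bounds $\sup_{t}\|\Theta_m(t)\|_{\L^p}$ directly by $C(p,T)\big(\sum_j\int_0^T\|\sigma_j(s,\cdot,u_0(s,\cdot))(\dot{\mathfrak{h}}_{m,j}(s)-\dot{\mathfrak{h}}_j(s))\|_{\L^2}^2\d s\big)^{1/2}$ and then invokes the fact that the Hilbert--Schmidt operator $\kappa\mapsto\sigma(\cdot,\cdot,u_0)\kappa$ is compact, hence sends the weakly convergent controls to a strongly convergent sequence. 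This gives the vanishing of the right-hand side in one stroke, uniformly in $t$. Your route instead goes through pointwise convergence at each $(t,x)$ and an Arzel\`a--Ascoli upgrade.

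There is a genuine gap in your Arzel\`a--Ascoli step. For relative compactness of $\{\Theta_m\}_m$ in $\C([0,T];\L^p([0,1]))$ you need, in addition to equicontinuity in $t$, that for each fixed $t$ the set $\{\Theta_m(t):m\geq 1\}$ is relatively compact in $\L^p([0,1])$. Uniform boundedness in $\L^p$ does not give this, because bounded sets in an infinite-dimensional $\L^p$ are not relatively compact. Without pointwise compactness there need be no cluster point at all, and the ``any cluster point must agree with the pointwise limit $0$'' argument has nothing to latch onto. This is repairable in two standard ways: (i) exploit the parabolic smoothing of $G$ to show $\{\Theta_m(t)\}_m$ is uniformly bounded in some $\H^{s}([0,1])$ with $s>\tfrac12-\tfrac1p$, hence relatively compact in $\L^p$ by the compact Sobolev embedding; or, more economically, (ii) skip Arzel\`a--Ascoli entirely: a dominated-convergence argument in $x$ upgrades your $(t,x)$-pointwise limit to $\Theta_m(t)\to 0$ in $\L^p$ for each fixed $t$, and this together with the $t$-equicontinuity you already have already forces $\sup_{t\in[0,T]}\|\Theta_m(t)\|_{\L^p}\to 0$ by the usual finite-$\delta$-net argument, with no compactness criterion needed. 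Either repair yields a valid proof; note also that your pointwise functional is well-defined only for a.e.\ $x$, not literally every $x$, since $G(t-\cdot,x,\cdot)\sigma_j(\cdot,\cdot,u_0)$ need only lie in $\L^2$ in the $(s,y)$-variables, so the dominated convergence step must be phrased a.e.

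Comparatively: the paper's Hilbert--Schmidt compactness argument is shorter and gives the uniform-in-$t$ bound directly, at the cost of leaning on an operator-theoretic fact. Your route is more elementary in spirit (a single functional-analytic input, namely weak convergence tested against fixed linear functionals) but requires a more careful functional-analytic bookkeeping to close; once the $\L^p$-compactness gap is filled (or bypassed via the DCT/equicontinuity shortcut), it is a legitimate alternative proof.
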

			\begin{proof}
				We prove that the map $\mathscr{G}_0(\cdot)$ is continuous with respect to the weak topology and since $\UU^N$ is compact, the compactness of $\mathrm{K}_N$ follows. 
				
				\vskip .2cm
				\noindent
				\textbf{Claim:}	If  $\mathfrak{h},\mathfrak{h}_m\subset \UU^N$ are such that $\mathfrak{h}_m\xrightarrow{w}\mathfrak{h}$, as $m\to\infty$, then the following holds:
				\begin{align}\label{4p15}
					\lim_{m\to \infty}\sup_{t\in[0,T]} \|\mathrm{Z}_{\mathfrak{h}_m}(t)-\mathrm{Z}_\mathfrak{h}(t)\|_{\L^p}=0.
				\end{align}
				From \eqref{CE2}, we have that for all $(t,x)\in[0,T]\times [0,1]$,
				\begin{align}\label{DT}\nonumber
					\mathrm{Z}_{\mathfrak{h}_m}(t,x)-\mathrm{Z}_\mathfrak{h}(t,x)&=\frac{\alpha}{\delta+1}\int_0^t\int_0^1\frac{\partial G}{\partial y}(t-s,x,y)\p'(u_{0}(s,y))\big(\mathrm{Z}_{\mathfrak{h}_m}(s,y)-\mathrm{Z}_\mathfrak{h}(s,y)\big)\d y\d s\nonumber\\&\nonumber\quad+\beta\int_0^t\int_0^1G(t-s,x,y)\c'(u_{0}(s,y))\big(\mathrm{Z}_{\mathfrak{h}_m}(s,y)-\mathrm{Z}_\mathfrak{h}(s,y)\big)\d y\d s\\&\nonumber\quad+
					\sum_{j\in\N}
					\int_0^t\int_0^1G(t-s,x,y)\sigma_j(s,y,u_0(s,y))(\dot{\mathfrak{h}_j}_{m}(s)-\dot{\mathfrak{h}}_{j}(s))\d y\d s
					\\& =:I_{1}^{m}(t,x)+I_{2}^{m}(t,x)+I_{3}^{m}(t,x).
				\end{align}
				
				
				Now, we consider the term $\|I_1^m(\cdot)\|_{\L^p}$ and estimate it using a similar calculation to \eqref{prop4.91} and Theorem \ref{thrmE2} as
				\begin{align}\label{4p17}\nonumber
					\|I_{1}^{m}(t)\|_{\L^p}&\leq C(\alpha,\delta)\int_{0}^{t}(t-s)^{-1+\frac{1}{2p}}
					\big	\|\p'(u_{0}(s))\big(\mathrm{Z}_{\mathfrak{h}_m}(s)-\mathrm{Z}_\mathfrak{h}(s)\big)\|_{\L^1}\d s
					\nonumber	\\ &\nonumber
					\leq C(\alpha,\delta)\int_{0}^{t}(t-s)^{-1+\frac{1}{2p}}
					\|u_{0}^\delta(s)\|_{\L^{\frac{p}{\delta}}}\|\mathrm{Z}_{\mathfrak{h}_m}(s)-\mathrm{Z}_\mathfrak{h}(s)\|_{\L^{\frac{p}{p-\delta}}}\d s
					\\ & \leq  C(\alpha,\delta,u_0)\int_{0}^{t}(t-s)^{-1+\frac{1}{2p}}\|\mathrm{Z}_{\mathfrak{h}_m}(s)-\mathrm{Z}_\mathfrak{h}(s)\|_{\L^p}\d s.
				\end{align}Substituting  $\c'(u)=-\gamma+(1+\gamma)(1+\delta)u^{\delta}-(2\delta+1)u^{2\delta}$ in the term $I_2^m$ appearing in the right hand side of \eqref{DT}, we find
				\begin{align}\label{4p21}\nonumber
					I_{2}^m(t,x)&=\beta\bigg\{-\gamma\int_0^t\int_0^1G(t-s,x,y)\big(\mathrm{Z}_{\mathfrak{h}_m}(s,y)-\mathrm{Z}_\mathfrak{h}(s,y)\big)\d y\d s\\&\nonumber\qquad +(1+\gamma)(1+\delta)\int_0^t\int_0^1G(t-s,x,y)u^\delta(s)\d y\d s\\&\nonumber\qquad -(2\delta+1)\int_0^t\int_0^1G(t-s,x,y)u^{2\delta}(s)\big(\mathrm{Z}_{\mathfrak{h}_m}(s,y)-\mathrm{Z}_\mathfrak{h}(s,y)\big)\d y\d s\bigg\}\\& =:  I_{21}^m(t,x)+I_{22}^m(t,x)+I_{23}^m(t,x).
				\end{align}
				
				Using a similar calculation to \eqref{4p17} (cf. \eqref{prop4.93}), we estimate the terms $\|I_{21}^m(\cdot)\|_{\L^p}$, $\|I_{22}^m(\cdot)\|_{\L^p}$ and $\|I_{23}^m(\cdot)\|_{\L^p}$ as
				\begin{align}
					\|I_{21}^{m}(t)\|_{\L^p}&\leq C(\beta,\gamma)\int_0^t \|\Z_{\mathfrak{h}_m}(s)-\Z_\mathfrak{h}(s)\|_{\L^p}\d s,\\
					\|I_{22}^{m}(t)\|_{\L^p}&\leq  C(\beta,\delta,u_0)\int_{0}^{t}(t-s)^{-\frac{1}{2}+\frac{1}{2p}}\|\mathrm{Z}_{\mathfrak{h}_m}(s)-\mathrm{Z}_\mathfrak{h}(s)\|_{\L^p}\d s, \\
					\|I_{23}^{m}(t)\|_{\L^p}&\leq  C(\beta,\delta,u_0)\int_{0}^{t}(t-s)^{-\frac{1}{2}+\frac{1}{2p}}\|\mathrm{Z}_{\mathfrak{h}_m}(s)-\mathrm{Z}_\mathfrak{h}(s)\|_{\L^p}\d s,
				\end{align}respectively.
				
				Making use of a similar calculation to \eqref{I_1}, we estimate the term $	\|I_{3}^m(\cdot)\|_{\L^p}$  as
				\begin{align}\nonumber 
					\|I_{3}^m(t)\|_{\L^p}&\nonumber\leq C(p) \sum_{j\in\N} \int_0^t(t-s)^{-\frac{1}{2}}\big\|e^{-\frac{|\cdot|^2}{a_1(t-s)}}*\sigma_j(s,\cdot,u_{0}(s,\cdot))\big(\dot{\mathfrak{h}_j}_m(s)-\dot{\mathfrak{h}}_j(s)\big)\big\|_{\L^p}\d s
					\\&\nonumber\leq C(p)\sum_{j\in\N}\int_0^t(t-s)^{-\frac{1}{4}+\frac{1}{2p}}\big\|\sigma_j(s,\cdot,u_{0}(s,\cdot))\big(\dot{\mathfrak{h}_j}_m(s)-\dot{\mathfrak{h}}_j(s)\big)\big\|_{\L^2}\d s 
					\\&\label{4.10}\leq C(p)T^{\frac{1}{4}+\frac{1}{2p}}\bigg(\sum_{j\in\N}\int_0^t\big\|\sigma_j(s,\cdot,u_{0}(s,\cdot))\big(\dot{\mathfrak{h}_j}_m(s)-\dot{\mathfrak{h}}_j(s)\big)\big\|_{\L^2}^2\d s \bigg)^\frac{1}{2}
					\\&\label{4.11}
					\to 0, \ \ \text{ as } \ \ m\to\infty,
				\end{align}
				where we have used the fact that the Hilbert-Schmidt operators on separable Hilbert spaces are compact operators.  Recall that a compact operator maps weakly convergent sequences to strongly convergent sequences. Since the sequence $\{\mathfrak{h}_m\}_{m\in\N}$ converges weakly to $\mathfrak{h}$ in $\mathfrak{U}^M$, we infer that for any $j\in\N$
				\begin{align*}
					\int_0^T\big\|\sigma_j(s,\cdot,u_{0}(s,\cdot))\big(\dot{\mathfrak{h}_j}_m(s)-\dot{\mathfrak{h}}_j(s)\big)\big\|_{\L^2}^2\d s\to 0, \  \text{ as } \  m\to\infty.
				\end{align*}
				


				Combining the above estimates in \eqref{DT}, we obtain  
				\begin{align}\label{DT2}\nonumber
					&	\sup_{t\in[0,T]}\|\mathrm{Z}_{\mathfrak{h}_m}(t)-\mathrm{Z}_\mathfrak{h}(t)\|_{\L^p}
					\\&\nonumber	\leq C(p, \delta, \alpha,\beta,\gamma,u_0)\int_{0}^{t}\left\{(t-s)^{-1+\frac{1}{2p}}+(t-s)^{-\frac{1}{2}+\frac{1}{2p}}+1\right\}\|\mathrm{Z}_{\mathfrak{h}_m}(s)-\mathrm{Z}_\mathfrak{h}(s)\|_{\L^p}\d s\\&\quad +\sup_{t\in[0,T]}\|I_{3}^{m}(t)\|_{\L^p}.
				\end{align}
				Applying Gronwall's inequality in \eqref{DT2} and then employing \eqref{4.11}, we conclude that
				\begin{align*}
					\sup_{t\in[0,T]}\|\mathrm{Z}_{\mathfrak{h}_m}(t)-\mathrm{Z}_\mathfrak{h}(t)\|_{\L^p}\leq C(p,T, \delta, \alpha,\beta,\gamma,u_0)\sup_{t\in[0,T]}\|I_{3}^{m}(t)\|_{\L^p}\to 0,  \ \text{as}\ m\to 0,
				\end{align*}
				which is the required claim \eqref{4p15}, and hence the proof.
			\end{proof}

			Let us now move to  verification of the Condition \ref{Cond2} (2).
			\begin{lemma}[Weak Convergence]\label{WC}
				Let $\mathfrak{h}^\e, \mathfrak{h}\in \mathscr{P}^2_N$ be such that $\mathfrak{h}^\e$ converges in distribution  to $\mathfrak{h}$, as $\e\to 0$. Then, $\mathrm{Z}_{\e,\mathfrak{h}^\e}(\cdot)$ converges to $\mathrm{Z}_\mathfrak{h}(\cdot)$ in distribution in the space $\C([0,T]; \L^p([0,1]))$, for $p>\max\{6,2\delta+1\}$.
			\end{lemma}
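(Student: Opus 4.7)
The plan is to follow the same stopping-time/Gronwall architecture used in Proposition \ref{prop1} and Theorem \ref{thrm2.2}, but now accounting for the controls $\mathfrak{h}^\e,\mathfrak{h}$. By the Skorokhod representation theorem, combined with the fact that $\UU^N$ is a compact Polish space (so $\PP_2^N$-valued random elements admit Skorokhod couplings), we may assume without loss of generality that $\mathfrak{h}^\e\to\mathfrak{h}$ almost surely in $\UU^N$ with the weak topology. It then suffices to prove
\begin{align*}
\lim_{\e\to 0}\E\bigg[\sup_{t\in[0,T\wedge\tau^{M,\e}]}\|\mathrm{Z}_{\e,\mathfrak{h}^\e}(t)-\mathrm{Z}_\mathfrak{h}(t)\|_{\L^p}\bigg]=0
\end{align*}
for each fixed $M$, since then Lemma \ref{lem4.1.1} together with the uniform bound \eqref{4p8.1} and an analogous bound for $\mathrm{Z}_\mathfrak{h}$ from \eqref{4p9} pushes the estimate from the stopped horizon to $[0,T]$, yielding convergence in probability, hence in distribution.

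Subtracting \eqref{CE2} from \eqref{CE1}, I would decompose the difference $\mathrm{Z}_{\e,\mathfrak{h}^\e}-\mathrm{Z}_\mathfrak{h}$ into four pieces: (i) a $\frac{\partial G}{\partial y}$-convolution involving $\frac{1}{\sqrt{\e}\lambda(\e)}(\p(u_0+\sqrt{\e}\lambda(\e)\mathrm{Z}_{\e,\mathfrak{h}^\e})-\p(u_0))-\p'(u_0)\mathrm{Z}_\mathfrak{h}$, (ii) the analogous $G$-convolution with $\c$, (iii) the genuinely stochastic term $\frac{1}{\lambda(\e)}\sum_j \int G\,\sigma_j\,\d\beta_j$, and (iv) the controlled term $\sum_j \int G\,[\sigma_j(u_0+\sqrt{\e}\lambda(\e)\mathrm{Z}_{\e,\mathfrak{h}^\e})\dot{\mathfrak{h}}^\e_j-\sigma_j(u_0)\dot{\mathfrak{h}}_j]\,\d s$. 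For (i) and (ii), applying Taylor's formula exactly as in \eqref{CLT2}--\eqref{CLT3} produces a $\sqrt{\e}\lambda(\e)$-remainder whose $\L^p$-norm I can bound by the higher moments of $u_0$ and $\mathrm{Z}_{\e,\mathfrak{h}^\e}$ (finite by \eqref{E31} and \eqref{4p8}), multiplied by powers of $\sqrt{\e}\lambda(\e)$ which tend to zero since $\sqrt{\e}\lambda(\e)\to 0$; the remaining linear part $\p'(u_0)(\mathrm{Z}_{\e,\mathfrak{h}^\e}-\mathrm{Z}_\mathfrak{h})$, $\c'(u_0)(\mathrm{Z}_{\e,\mathfrak{h}^\e}-\mathrm{Z}_\mathfrak{h})$ is the Gronwall feedback. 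For (iii), the BDG computation \eqref{noiseEstimate} gives a bound of the form $\lambda(\e)^{-p}C(K,M,T)$ which also vanishes.

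The main obstacle is term (iv). Splitting it as
\begin{align*}
\sum_{j\in\N}\int_0^t\!\!\int_0^1 G(t-s,\cdot,y)\bigl[\sigma_j(s,y,u_0+\sqrt{\e}\lambda(\e)\mathrm{Z}_{\e,\mathfrak{h}^\e})-\sigma_j(s,y,u_0)\bigr]\dot{\mathfrak{h}}^\e_j(s)\,\d y\d s
\end{align*}
plus
\begin{align*}
\sum_{j\in\N}\int_0^t\!\!\int_0^1 G(t-s,\cdot,y)\sigma_j(s,y,u_0(s,y))\bigl(\dot{\mathfrak{h}}^\e_j(s)-\dot{\mathfrak{h}}_j(s)\bigr)\,\d y\d s,
\end{align*}
the first summand is controlled by the Lipschitz property of $g$, Cauchy--Schwarz on $\dot{\mathfrak{h}}^\e\in\UU^N$, and the factor $\sqrt{\e}\lambda(\e)\to 0$, giving something like $\sqrt{\e}\lambda(\e)\cdot C(L,N,T)(1+\|\mathrm{Z}_{\e,\mathfrak{h}^\e}\|)$ that vanishes up to the Gronwall feedback. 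The second summand is precisely the quantity handled in Lemma \ref{compactness}: the Hilbert--Schmidt operator $\mathfrak{h}\mapsto \sum_j\int G\,\sigma_j(u_0)\dot{\mathfrak{h}}_j\,\d y\d s$ is compact from $\mathcal{H}$ to $\C([0,T];\L^p)$, and a compact operator carries weakly convergent sequences to norm-convergent ones, so this piece converges to zero in $\C([0,T];\L^p)$ pathwise along the Skorokhod coupling.

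Putting all four estimates into a single Gronwall argument with the time-singular kernels $(t-s)^{-1+\frac{1}{2p}}$, $(t-s)^{-\frac12+\frac{1}{2p}}$ (whose integrability permits the singular Gronwall lemma used repeatedly above) yields
\begin{align*}
\E\bigg[\sup_{t\in[0,T\wedge\tau^{M,\e}]}\|\mathrm{Z}_{\e,\mathfrak{h}^\e}(t)-\mathrm{Z}_\mathfrak{h}(t)\|_{\L^p}\bigg]\le C_{M,N,T,\delta,\gamma,\alpha,\beta,K,L}\bigl(o_\e(1)+\E[R^\e]\bigr),
\end{align*}
where $R^\e\to 0$ a.s.\ by the compactness argument and is dominated (using $\mathfrak{h}^\e,\mathfrak{h}\in\UU^N$) so that dominated convergence applies. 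The hardest moment is ensuring the Hilbert--Schmidt compactness argument of Lemma \ref{compactness} can be transplanted to the randomized setting here; Skorokhod representation is the device that makes this possible.
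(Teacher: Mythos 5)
Your proposal follows essentially the same architecture as the paper's proof: Skorokhod coupling to upgrade distributional to almost-sure (weak) convergence of the controls, a stopping-time reduction via Lemma \ref{lem4.1.1} using the uniform tail bounds from Proposition \ref{prop4.6} and Proposition \ref{prop4.7}, a four-term decomposition of $\mathrm{Z}_{\e,\mathfrak{h}^\e}-\mathrm{Z}_\mathfrak{h}$ into the $\p$-drift, $\c$-drift, stochastic-integral, and controlled-integral pieces, Taylor expansion of the drift terms into a vanishing remainder plus a Gronwall feedback term, and the compact-operator argument (weak $\Rightarrow$ strong convergence) from Lemma \ref{compactness} for the controlled piece. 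The paper uses a slightly different split of your term (iv) — it puts $\dot{\mathfrak{h}}^\e-\dot{\mathfrak{h}}$ against $\sigma_j$ evaluated at the shifted argument and the Lipschitz difference against $\dot{\mathfrak{h}}_j$, whereas you isolate $\sigma_j(u_0)(\dot{\mathfrak{h}}^\e_j-\dot{\mathfrak{h}}_j)$ so that the compactness lemma applies verbatim to a fixed deterministic integrand; your split is arguably cleaner on that point. The paper also handles $J_3^\e$ via a continuous-modification observation, while you extract the $\lambda(\e)^{-p}$ decay directly from the BDG bound \eqref{noiseEstimate}; both are valid. One minor imprecision in your write-up is the reuse of the symbol $\tau^{M,\e}$ from \eqref{stoppingtime}: the correct stopping time here must be built on the $\L^p$-norms of $\mathrm{Z}_{\e,\mathfrak{h}^\e}$ and $\mathrm{Z}_\mathfrak{h}$ (the paper's $\tau^{\rho,\e}$ in \eqref{435}), not on $u_\e$, but your surrounding discussion (invoking \eqref{4p8.1} and \eqref{4p9} to control the tail) shows you have the right object in mind.
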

			\begin{proof}
				Since $\mathfrak{h}^\e$ converges in distribution to $\mathfrak{h}$ as $\e \to 0$, the well-known Skorokhod representation theorem can be applied. This allows us to consider the existence of a stochastic  basis $(\widetilde{\Omega}, \widetilde{\mathscr{F}},\{\widetilde{\mathscr{F}}_t\}_{t\geq 0},\widetilde{\P}),$
				a Brownian motion $\widetilde{\boldsymbol{\beta}}(\cdot)$, families of $\widetilde{\mathscr{F}}_t$-adapted predictable processes $\{\widetilde{\mathfrak{h}}^\e, \e>0\},$ and $ \widetilde{\mathfrak{h}} \in \mathscr{P}_2^N$ such that the joint law of $(\widetilde{\mathfrak{h}}^\e,\widetilde{\mathfrak{h}},\widetilde{\P} )$ coincides with the law of $(\mathfrak{h}^\e,\mathfrak{h},\mathscr{\P})$  and 
				$\widetilde{\mathfrak{h}^\e}$ converges weakly  to $\widetilde{\mathfrak{h}}$, as $\e\to 0$,  that is,
				$$ \langle \widetilde{\mathfrak{h}}^\e-\widetilde{\mathfrak{h}}, g\rangle_{\mathcal{H}}=0, \ \text{ for all } \ g\in \mathcal{H},\ \widetilde{\P}\text{-a.s}. $$
				Let $\widetilde{\mathrm{Z}}_{\e,\widetilde{\mathfrak{h}}^\e}$ and $\widetilde{\mathrm{Z}}_{\widetilde{\mathfrak{h}}}$ be the solutions to  equations similar to \eqref{CE1} and \eqref{CE2}, respectively, where $\mathfrak{h}^\e$ is replaced by $\widetilde{\mathfrak{h}}^\e, {\boldsymbol{\beta}}(\cdot)$ by $\widetilde{\boldsymbol{\beta}}(\cdot)$ and $\mathfrak{h}$ by $\widetilde{\mathfrak{h}}$ . In order to prove this Lemma, we only need to show that 
				\begin{align}\label{4p25}
					\lim_{\e\to0}\widetilde{\P}\bigg(\sup_{t\in[0,T]}\|\widetilde{\mathrm{Z}}_{\e,\widetilde{\mathfrak{h}}^\e}(t)-\widetilde{\mathrm{Z}}_{\widetilde{\mathfrak{h}}(t)}\|_{\L^p}\geq \varpi\bigg)=0.
				\end{align}
				Define a sequence of stopping times
				\begin{align}\label{435}
					\tau^{\rho,\e}:=\inf\big\{t\geq 0: \|\widetilde{\mathrm{Z}}_{\e,\mathfrak{h}^\e}(t)\|_{\L^p}\wedge \|\widetilde{\mathrm{Z}}_\mathfrak{h}(t)\|_{\L^p}> \rho\big\}\wedge T,
				\end{align}
				where $\rho$ is some constant large enough. 
				From the estimates \eqref{4p8.1} and \eqref{4p9}, we get 
				\begin{align}\label{STP}
					\lim_{\rho\rightarrow \infty}\sup_{\e\in (0,1]}\widetilde{\P}\big(\tau^{\rho,\e}< T\big)=0.
				\end{align} 
				The above argument can be justified in the following way: Using the definition of the stopping time given in \eqref{435}, we infer 
				\begin{align*}
					\sup_{\e\in(0,1]}	\widetilde{\P}\big(\tau^{\rho,\e}< T\big)\leq \sup_{\e\in(0,1]} \widetilde{\mathbb{P}}\bigg(\sup_{t\in[0,T]}\|\widetilde{\mathrm{Z}}_{\e,\mathfrak{h}^\e}(t)\|_{\L^p}>\rho\bigg)\to 0,\ \text{ as }\ \rho\to\infty,
				\end{align*}
				by using \eqref{4p8.1}.  In view of \eqref{4p25}, \eqref{STP} and Lemma \ref{lem4.1.1} (see \eqref{3.11} also), we need only to prove the following:
				\begin{align}\label{4p37}
					\lim_{\e\to0}\widetilde{\P}\bigg(\sup_{t\in [0,T\wedge\tau^{\rho,\e}]}\|\widetilde{\mathrm{Z}}_{\e,\widetilde{\mathfrak{h}}^\e}(t)-\widetilde{\mathrm{Z}}_{\widetilde{\mathfrak{h}}(t)}\|_{\L^p}\geq \varpi\bigg)=0,
				\end{align}for any $\rho>0$ large enough. The above fact can be justified using a calculation similar  to the performed in \eqref{3.11}.
					
					Now, with the help of \eqref{CE1} and \eqref{CE2}, we  have for all $t\in[0,T],$ $ \P$-a.s.
					\begin{align}\label{4.14}
						&\widetilde{\mathrm{Z}}_{\e,{\widetilde{\mathfrak{h}}}^\e}(t,x)-\widetilde{\mathrm{Z}}_{\widetilde{\mathfrak{h}}}(t,x)\nonumber \\
						&\nonumber
						=\frac{\alpha}{\delta+1}\int_0^t\int_0^1\frac{\partial G}{\partial y}(t-s,x,y)\\&\nonumber \qquad\times\bigg[\frac{1}{\sqrt{\e} \lambda(\e)}\big(\p(u_0(s,y)+\sqrt{\e} \lambda(\e)\widetilde{\mathrm{Z}}_{\e,{\widetilde{\mathfrak{h}}}^\e}(s,y))-\p(u_0(s,y))\big)
						-\p'(u_0(s,y))\widetilde{\mathrm{Z}}_{\widetilde{\mathfrak{h}}}(s,y)\bigg] \d y\d s\\
						&\nonumber\quad  +\beta\int_0^t\int_0^1G(t-s,x,y)\\&\nonumber \qquad\times\bigg[\frac{1}{\sqrt{\e} \lambda(\e)}\big(\c(u_0(s,y)+\sqrt{\e} \lambda(\e)	\widetilde{\mathrm{Z}}_{\e,{\widetilde{\mathfrak{h}}}^\e}(s,y))-\c(u_0(s,y))\big) -\c'(u_0(s,y))\widetilde{\mathrm{Z}}_{\widetilde{\mathfrak{h}}}(s,y)\bigg]\d y\d s\\
						&\nonumber\quad 
						+\sum_{j\in\N}\int_0^t\int_0^1G(t-s,x,y)\\&\nonumber \qquad\times\bigg[\frac{1}{ \lambda(\e)}\big(\sigma_j(s,y,(u_0+\sqrt{\e} \lambda(\e)	\widetilde{\mathrm{Z}}_{\e,{\widetilde{\mathfrak{h}}}^\e})(s,y))-\sigma_j(s,y,u_0(s,y))\big)\bigg]
						\d y\d \widetilde{\beta}_j(s)
						\\&\nonumber\quad + 
						\sum_{j\in\N}\int_0^t\int_0^1G(t-s,x,y)\\&\nonumber \qquad\times\bigg[\sigma_j(s,y,(u_0+\sqrt{\e} \lambda(\e)	\widetilde{\mathrm{Z}}_{\e,{\widetilde{\mathfrak{h}}}^\e})(s,y))\dot{{\widetilde{\mathfrak{h}}}}^\e_j(s) -\sigma_j(s,y,u_0(s,y))\dot{{\widetilde{\mathfrak{h}}}}_j(s)\bigg]\d y\d s
						\\&
						=:\frac{\alpha}{\delta+1}J_{1}^{\e}(t,x)+\beta J_{2}^{\e}(t,x)+J_{3}^{\e}(t,x)+J_{4}^{\e}(t,x).
					\end{align}
					For the sake of simplicity, we will no longer be using the tilde (~$\widetilde{\cdot}$~) notation.
					Let us consider the first term $J_1$ in the right-hand side of the expression \eqref{4.14} and split into the following manner:
					\begin{align}\label{4.15}
						&J_{1}^{\e}(t,x)
						\nonumber \\ &\nonumber=\int_0^t\int_0^1\frac{\partial G}{\partial y}(t-s,x,y)\bigg[\frac{1}{\sqrt{\e} \lambda(\e)}\big(\p(u_0(s,y)+\sqrt{\e} \lambda(\e)\mathrm{Z}_{\e,\mathfrak{h}^\e}(s,y))-\p(u_0(s,y))\big)
						\nonumber \\ &\nonumber\qquad \quad 
						-\p'(u_0(s,y))\mathrm{Z}_{\e,\mathfrak{h}^\e}(s,y)\bigg] \d y\d s \\&\nonumber\quad +\int_0^t\int_0^1\frac{\partial G}{\partial y}(t-s,x,y)\bigg[\p'(u_0(s,y))\big(\Z_{\e,\mathfrak{h}^\e}(s,y)-\Z_{\mathfrak{h}}(s,y)\big)\bigg]\d y \d s
						\\&=:J_{11}^{\e}(t,x)+J_{12}^{\e}(t,x).
					\end{align}
					By Taylor's formula, there exists an $\eta^{\e}_{1}$ taking values in $(0,1)$, such that
					\begin{align}
						\p(u_0+\sqrt{\e} \lambda(\e)\mathrm{Z}_{\e,\mathfrak{h}^\e})-\p(u_0)
						\nonumber =\sqrt{\e} \lambda(\e)\p'\big(u_0+\sqrt{\e} \lambda(\e)\eta^{\e}_{1}\mathrm{Z}_{\e,\mathfrak{h}^\e}\big)\mathrm{Z}_{\e,\mathfrak{h}^\e}.
					\end{align}
					Again, by Taylor's formula, there exists an $\eta^{\e}_{2}$ taking values in $(0,1)$, such that
					\begin{align}\label{p11}\nonumber
						&	\big|\p'\big(u_0+\sqrt{\e} \lambda(\e)\eta^{\e}_{1}\mathrm{Z}_{\e,\mathfrak{h}^\e}\big)\mathrm{Z}_{\e,\mathfrak{h}^\e}
						-\p'(u_0)\mathrm{Z}_{\e,\mathfrak{h}^\e}\big|
						\\&\nonumber\leq \sqrt{\e} \lambda(\e)\big|\p''\big(u_0+\sqrt{\e} \lambda(\e)\eta^{\e}_{2}\mathrm{Z}_{\e,\mathfrak{h}^\e}\big)\big|\big|\mathrm{Z}_{\e,\mathfrak{h}^\e}\big|\\&\leq C(\delta)|u_0|^{\delta-1}\big|\mathrm{Z}_{\e,\mathfrak{h}^\e}\big|^2+C(\delta)(\sqrt{\e}{\lambda(\e)})^{\delta-1}\big|\mathrm{Z}_{\e,\mathfrak{h}^\e}\big|^{\delta+1}.
					\end{align}
					Using the inequality \eqref{p11}, the estimate \eqref{A2}, H\"older's, Minkowski's and Young's inequality, we estimate the  term $\|J_{11}^{\e}(\cdot)\|_{\L^p}$ as 
					\begin{align}\label{J_11}\nonumber
						&\|J_{11}^{\e}(t)\|_{\L^p}
						\\&\nonumber\leq 	\e \lambda(\e)C(\delta)\int_{0}^{t}
						(t-s)^{-1+\frac{1}{2p}}\bigg(\|u_0(s)\|_{\L^{\frac{p(\delta-1)}{p-2}}}^{2(\delta-1)}+ \|\mathrm{Z}_{\e,\mathfrak{h}^\e}(s)\|_{\L^p}^{4}\bigg)\d s
						\\&\quad
						+C(\delta)\e^{\frac{\delta+1}{2}}{\lambda^{\delta+1}(\e)}
						\int_{0}^{t}
						(t-s)^{-1+\frac{1}{2p}}\bigg(\big\|\mathrm{Z}_{\e,\mathfrak{h}^\e}(s)\big\|_{\L^p}^{4}
						+\big\|\mathrm{Z}_{\e,\mathfrak{h}^\e}(s)\big\|_{\L^{\frac{p(\delta-1)}{p-2}}}^{2(\delta-1)}\bigg)\d s. 
					\end{align}
					Taking supremum from $0$ to $T\wedge\tau^{\rho,\e}$ and then taking expectation in \eqref{J_11}, we find
					{	\begin{align}\label{4.16}
							&\E\bigg[\sup_{t\in [0,T\wedge \tau^{\rho,\e}]}\|J_{11}^{\e}(t)\|_{\L^p}\bigg]
							\nonumber\\&\nonumber
							\leq\e \lambda(\e)C(\delta)\E\bigg[\int_{0}^{T\wedge\tau^{\rho,\e}}
							(t-s)^{-1+\frac{1}{2p}}\bigg( \sup_{r\in [0,s\wedge \tau^{\rho,\e}]}\|u_0(r)\|_{\L^{\frac{p(\delta-1)}{p-2}}}^{2(\delta-1)}+\sup_{r\in [0,s\wedge \tau^{\rho,\e}]}\big\|\mathrm{Z}_{\e,\mathfrak{h}^\e}(r)\big\|_{\L^p}^{4}\bigg)\d s\bigg]\\&\nonumber\quad+\e^{\frac{\delta+1}{2}}{\lambda^{\delta+1}(\e)}C(\delta)\E\bigg[
							\int_{0}^{t}
							(t-s)^{-1+\frac{1}{2p}}	\\&\qquad\quad\nonumber	\times \bigg(\sup_{r\in [0,s\wedge \tau^{\rho,\e}]}\big\|\mathrm{Z}_{\e,\mathfrak{h}^\e}(r)\big\|_{\L^p}^{4}
							+\sup_{r\in [0,s\wedge \tau^{\rho,\e}]}\big\|\mathrm{Z}_{\e,\mathfrak{h}^\e}(r)\big\|_{\L^{\frac{p(\delta-1)}{p-2}}}^{2(\delta-1)}\bigg)\d s\bigg]
							\\&
							\leq C(\delta, T, \rho)\left(\e \lambda(\e)+(\e{\lambda^{2}(\e)})^{\frac{\delta+1}{2}}\right).
					\end{align}}
					Using a similar calculation to \eqref{4.16}, we estimate the term $\E\bigg[\sup\limits_{t\in [0,T\wedge \tau^{\rho,\e}]}\|J_{12}^{\e}(t)\|_{\L^p}\bigg]$ as
					\begin{align}\label{4.17}\nonumber
						&	\E\bigg[\sup_{s\in [0,T\wedge \tau^{\rho,\e}]}\|J_{12}^{\e}(t)\|_{\L^p}\bigg]\\&\leq C(\alpha,\delta)\E\bigg[\int_{0}^{T\wedge \tau^{\rho,\e}}(t-s)^{-1+\frac{1}{2p}}
						\sup_{r\in [0,s\wedge \tau^{\rho,\e}]}\big\|\p'(u_{0}(r))\big(\mathrm{Z}_{\e,\mathfrak{h}^\e}(r)-\mathrm{Z}_\mathfrak{h}(r)\big)\big\|_{\L^1}\d s\bigg]
						\nonumber	\\ &\nonumber
						\leq C(\alpha,\delta)\E\bigg[\int_{0}^{T\wedge \tau^{\rho,\e}}(t-s)^{-1+\frac{1}{2p}}
						\sup_{r\in [0,s\wedge \tau^{\rho,\e}]}	\|\p'(u_{0}(r))\|_{\L^{\frac{p}{p-1}}}\|\mathrm{Z}_{\e,\mathfrak{h}^\e}(r)-\mathrm{Z}_\mathfrak{h}(r)\|_{\L^p}\d s\bigg]
						\\&\leq C(\alpha,\delta,\rho)\int_{0}^{T}(t-s)^{-1+\frac{1}{2p}}\E\bigg[\sup_{r\in [0,s\wedge \tau^{\rho,\e}]}\|\mathrm{Z}_{\e,\mathfrak{h}^\e}(r)-\mathrm{Z}_\mathfrak{h}(r)\|_{\L^p}\bigg]\d s.	
					\end{align}
					Combining \eqref{4.16} and \eqref{4.17} in \eqref{4.15}, we arrive at 
					\begin{align}\label{4.18}\nonumber
						&	\E\bigg[\sup_{t\in [0,T\wedge \tau^{\rho,\e}]}\|J_{1}^{\e}(t)\|_{\L^p}\bigg]\\&\nonumber\leq C(\delta, T, \rho)\left(\e \lambda(\e)+(\e{\lambda^{2}(\e)})^{\frac{\delta+1}{2}}\right)\\&\quad 
						+C(\alpha,\delta,\rho)\int_{0}^{T}(t-s)^{-1+\frac{1}{2p}}\E\bigg[\sup_{r\in [0,s\wedge \tau^{\rho,\e}]}\|\mathrm{Z}_{\e,\mathfrak{h}^\e}(r)-\mathrm{Z}_\mathfrak{h}(r)\|_{\L^p}\bigg]\d s.	
					\end{align}
					Using a similar calculation to \eqref{4.15}, we can split the second term  $J_2^\e$ appearing in the right hand side of \eqref{4.14} as
					\begin{equation}\label{J_2}
						J_{2}^{\e}(t,x)=:J_{21}^{\e}(t,x)+J_{21}^{\e}(t,x),\end{equation}
					where
					\begin{align*}
						J_{21}^{\e}(t,x)
						&=\int_0^t\int_0^1G(t-s,x,y)\bigg[\frac{1}{\sqrt{\e} \lambda(\e)}\big(\c(u_0(s,y)+\sqrt{\e} \lambda(\e)	\mathrm{Z}_{\e,\mathfrak{h}^\e}(s,y))-\c(u_0(s,y))\big)
						\\ &\nonumber\quad\quad-\c'(u_0(s,y))\mathrm{Z}_{\e,\mathfrak{h}^\e}(s,y)\bigg]d y\d s,
					\end{align*}
					and 
					\begin{align*}
						J_{22}^{\e}(t,x)=\int_0^t\int_0^1G(t-s,x,y)\c'(u_0(s,y))\big(\mathrm{Z}_{\e,\mathfrak{h}^\e}(s,y)-\mathrm{Z}_\mathfrak{h}(s,y)\big) \d y \d s.
					\end{align*}By Taylor's formula, there exists  a $\zeta_1^\e$ taking values in $(0,1)$ such that (cf. \eqref{CLT3})
					\begin{align*}
						&\big|	\big(\c(u_0+\sqrt{\e} \lambda(\e)	\mathrm{Z}_{\e,\mathfrak{h}^\e})-\c(u_0)\big)-\c'(u_0)\mathrm{Z}_{\e,\mathfrak{h}^\e}\big| \\& \nonumber\leq \sqrt{\e} \lambda(\e) \big|\c''\big(u_0+\sqrt{\e} \lambda(\e)\zeta_1^\e \mathrm{Z}_{\e,\mathfrak{h}^\e}(s,y)\big)\big|\big|\mathrm{Z}_{\e,\mathfrak{h}^\e}\big|^2
						\\&\nonumber\leq \sqrt{\e} \lambda(\e) \bigg[\delta(\delta+1)(1-\gamma)\big|u_0+\sqrt{\e}\lambda(\e)\mathrm{Z}_{\e,\mathfrak{h}^\e}\big|^{\delta-1}
						+2\delta(2\delta+1)\big|u_0+\sqrt{\e}\lambda(\e)\mathrm{Z}_{\e,\mathfrak{h}^\e}\big|^{2\delta-1}\bigg]\big|\mathrm{Z}_{\e,\mathfrak{h}^\e}\big|^2
						\\&\nonumber
						\leq C(\delta, \gamma)\bigg[\big\{(\sqrt{\e} \lambda(\e))^\delta\big|u_0\big|^{\delta-1}+(\sqrt{\e} \lambda(\e))^{2\delta}\big|u_0\big|^{2\delta-1}\big\}\big|\mathrm{Z}_{\e,\mathfrak{h}^\e}\big|^2	
						+(\sqrt{\e}\lambda(\e))^{\delta}\big|\mathrm{Z}_{\e,\mathfrak{h}^\e}\big|^{\delta+1}\\& \qquad+(\sqrt{\e}\lambda(\e))^{2\delta}\big|\mathrm{Z}_{\e,\mathfrak{h}^\e}\big|^{2\delta+1}
						\bigg].
					\end{align*}
					Now, we consider the term $\|J_{21}^{\e}(\cdot)\|_{\L^p}$ and estimate it using the above inequality and a similar calculation to \eqref{J_11}, as
					\begin{align}\label{27p40}
						&\|J_{21}^{\e}(t)\|_{\L^p}\nonumber\\
						& \nonumber\leq C(p)\int_0^t(t-s)^{-\frac{1}{2}+\frac{1}{2p}}\bigg\|\frac{1}{\sqrt{\e} \lambda(\e)}\big(\c(u_0(s)+\sqrt{\e} \lambda(\e)	\mathrm{Z}_{\e,\mathfrak{h}^\e}(s))-\c(u_0(s))\big)	\\ \nonumber &\qquad	 
						\quad -\c'(u_0(s))\mathrm{Z}_{\e,\mathfrak{h}^\e}(s)\bigg\|_{\L^1}\d y\d s
						\\ \nonumber &
						\leq C(\delta, \gamma)\int_0^t(t-s)^{-\frac{1}{2}+\frac{1}{2p}}\bigg\{\big\|\big((\sqrt{\e} \lambda(\e))^{\delta}|u_0(s)|^{\delta-1}+(\sqrt{\e} \lambda(\e))^{2\delta}|u_0(s)|^{2\delta-1}\big)|\mathrm{Z}_{\e,\mathfrak{h}^\e}(s)|^2\big\|_{\L^1}
						\\ \nonumber &\qquad \quad 
						+(\sqrt{\e}\lambda(\e))^{\delta}\|\mathrm{Z}_{\e,\mathfrak{h}^\e}(s)\|^{\delta+1}_{\L^{\delta+1}}+(\sqrt{\e}\lambda(\e))^{2\delta}\|\mathrm{Z}_{\e,\mathfrak{h}^\e}(s)\|^{2\delta+1}_{\L^{2\delta+1}}\bigg\}\d s \\ \nonumber &
						\leq C(\delta, \gamma)\int_0^t(t-s)^{-\frac{1}{2}+\frac{1}{2p}}\bigg\{\left((\sqrt{\e} \lambda(\e))^{\delta}\|u_0(s)\|^{\delta-1}_{\L^{\frac{p(\delta-1)}{p-2}}}+(\sqrt{\e} \lambda(\e))^{2\delta}\|u_0(s)\|^{2\delta-1}_{\L^{\frac{p(2\delta-1)}{p-2}}}\right)\|\mathrm{Z}_{\e,\mathfrak{h}^\e}(s)\|^{2}_{\L^p}
						\\  &
						\qquad\quad +(\sqrt{\e}\lambda(\e))^{\delta}\|\mathrm{Z}_{\e,\mathfrak{h}^\e}(s)\|^{\delta+1}_{\L^{\delta+1}}+(\sqrt{\e}\lambda(\e))^{2\delta}\|\mathrm{Z}_{\e,\mathfrak{h}^\e}(s)\|^{2\delta+1}_{\L^{2\delta+1}}\bigg\}\d s.
					\end{align}
					Taking supremum from $0$ to  $T\wedge\tau^{\rho,\e}$ and then taking expectation on both sides of the inequality \eqref{27p40}, we get
					\begin{align}\label{4.20}
						&\E\bigg[\sup_{t\in [0,T\wedge \tau^{\rho,\e}]}\|J_{21}^{\e}(t)\|_{\L^p}\bigg] \nonumber
						\\ \nonumber &\leq C(\delta, \gamma)\E\bigg[\int_0^{T\wedge\tau^{\rho,\e}}(t-s)^{-\frac{1}{2}+\frac{1}{2p}} \\&\qquad \quad \times\nonumber
						\sup_{r\in [0,s\wedge \tau^{\rho,\e}]}\bigg\{\left((\sqrt{\e} \lambda(\e))^{\delta}\|u_0(r)\|^{\delta-1}_{\L^{\frac{p(\delta-1)}{p-2}}}+(\sqrt{\e} \lambda(\e))^{2\delta}\|u_0(r)\|^{2\delta-1}_{\L^{\frac{p(2\delta-1)}{p-2}}}\right)\|\mathrm{Z}_{\e,\mathfrak{h}^\e}(r)\|^{2}_{\L^p}
						\\ \nonumber &\qquad +
						(\sqrt{\e}\lambda(\e))^{\delta}\|\mathrm{Z}_{\e,\mathfrak{h}^\e}(r)\|^{\delta+1}_{\L^{\delta+1}}
						+(\sqrt{\e}\lambda(\e))^{2\delta}\|\mathrm{Z}_{\e,\mathfrak{h}^\e}(r)\|^{2\delta+1}_{\L^{2\delta+1}}\bigg\}\d s\bigg]
						\\  &
						\leq  \left((\sqrt{\e}\lambda(\e))^{\delta}+(\sqrt{\e}\lambda(\e))^{2\delta}\right)C(\delta, \gamma,p,  \rho, T,u_0).
					\end{align}
					Let us consider the term $	\E\bigg[\sup\limits_{t\in [0,T\wedge \tau^{\rho,\e}]}\|J_{22}^{\e}(t)\|_{\L^p}\bigg]$, and estimate it using a similar calculation to \eqref{4.17}, as
					\begin{align}\label{4.25}\nonumber
						&	\E\bigg[\sup_{t\in [0,T\wedge \tau^{\rho,\e}]}\|J_{22}^{\e}(t)\|_{\L^p}\bigg]\\&\leq C(\delta,\gamma)\int_{0}^{T}(t-s)^{-\frac{1}{2}+\frac{1}{2p}}\E\bigg[\sup_{r\in [0,s\wedge \tau^{\rho,\e}]}\|\mathrm{Z}_{\e,\mathfrak{h}^\e}(r)-\mathrm{Z}_\mathfrak{h}(r)\|_{\L^p}\bigg]\d s.		\end{align}
					Substituting the estimates \eqref{4.20} and \eqref{4.25} in \eqref{J_2}, we find
					\begin{align*}
						\E\bigg[\sup_{t\in [0,T\wedge \tau^{\rho,\e}]}\|J_{2}^{\e}(t)\|_{\L^p}\bigg] & \leq \left((\sqrt{\e}\lambda(\e))^{\delta}+(\sqrt{\e}\lambda(\e))^{2\delta}\right)C(\delta, \gamma,p,  \rho, T,u_0)\\&\quad+C(\delta,\gamma)\int_{0}^{T}(t-s)^{-\frac{1}{2}+\frac{1}{2p}}\E\bigg[\sup_{r\in [0,s\wedge \tau^{\rho,\e}]}\|\mathrm{Z}_{\e,\mathfrak{h}^\e}(r)-\mathrm{Z}_\mathfrak{h}(r)\|_{\L^p}\bigg]\d s.	
					\end{align*}

					From  \cite[pp. 37]{KKM}, we know that the process $J_{3}^{\e}$ has  a modification in the space $\C([0,T]\times [0,1])$. Thus,  $\frac{1}{\lambda(\e)}J_{3}^{\e}\to 0$, $\P$-a.s., as $\e\to 0$.
					
					Now, we consider the final term $J_{4}^{\e}$ in the right hand side of \eqref{4.14} and split it in the following way (cf. \eqref{4.15}):
					\begin{align}\label{4.26}\nonumber
						&	J_{4}^{\e}(t,x)\\&=\sum_{j\in\N}\int_0^t\int_0^1G(t-s,x,y)\sigma_j\big(s,y,(u_0+\sqrt{\e} \lambda(\e)	\mathrm{Z}_{\e,\mathfrak{h}^\e})(s,y)\big)\big(\dot{\mathfrak{h}}^\e_j(s)-\dot{\mathfrak{h}}_j(s)\big)\d y\d s
						\nonumber\\ \nonumber &\quad 
						+\sum_{j\in\N}	\int_0^t\int_0^1G(t-s,x,y)\big[\sigma_j\big(s,y,(u_0+\sqrt{\e} \lambda(\e)	\mathrm{Z}_{\e,\mathfrak{h}^\e})(s,y)\big)-\sigma_j(s,y,u_0(s,y))\big]\dot{\mathfrak{h}}_j(s)\d y\d s
						\\  &
						=:J_{41}^{\e}(t,x)+J_{42}^{\e}(t,x).
					\end{align}
					By employing similar calculations as in \eqref{4.10}, we estimate the term $\|J_{41}^{\e}(\cdot)\|_{\L^p}$ as
					\begin{align}\label{4.28}\nonumber
						&\|J_{41}^{\e}(t)\|_{\L^p}	\\ &
						\leq C(p)T^{\frac{1}{2p}+\frac{1}{4}}\bigg(\sum_{j\in\N}\int_0^{T\wedge\tau^{\rho,\e}}\big\|\sigma_j(s,\cdot,(u_0(s)+\sqrt{\e} \lambda(\e)	\mathrm{Z}_{\e,\mathfrak{h}^\e})(s,\cdot))\big(\dot{\mathfrak{h}^\e_j}(s)-\dot{\mathfrak{h}}_j(s)\big)\big\|_{\L^2}^2\d s \bigg)^\frac{1}{2}.
					\end{align}
					Using \eqref{4.11}, we obtain
					\begin{align}
						\lim_{\e\to 0}\E \bigg[\sup_{t\in [0,T\wedge \tau^{\rho,\e}]}\|J_{41}^{\e}(t)\|_{\L^p}	 \bigg]=0.
					\end{align}
							Using Hypothesis \ref{hyp1} (Lipschitz condition of $g$), Fubini's theorem, H\"older's, Minkowski's inequalities, \eqref{A1}, \eqref{A7} and Young's inequality for convolutions, we estimate the term  $\|J_{41}^{\e}(\cdot)\|_{\L^p}$  as (cf. \cite[Eq. (3.12)]{KKM})
							\begin{align}\label{J_42}\nonumber
								&\|J_{42}^{\e}(t)\|_{\L^p}\\&\nonumber
								=\bigg\|\sum_{j\in\N}	\int_0^t\int_0^1G(t-s,\cdot,y)\big[g\big(s,y,(u_0(s)+\sqrt{\e} \lambda(\e)	\mathrm{Z}_{\e,\mathfrak{h}^\e})(s)\big)-g(s,y,u_0(s))\big]
								\nonumber\\& \nonumber\qquad \times q_j\varphi_j(y)\dot{\mathfrak{h}}_j(s)\d y\d s\bigg\|_{\L^p}\\
								&	\nonumber\leq C(L) \sqrt{\e}\lambda(\e)
								\sum_{j\in\N}\int_0^t(t-s)^{\frac{1}{2p}-\frac{1}{2}}\|\mathrm{Z}_{\e,\mathfrak{h}^\e}(s)\varphi_j\|_{\L^1}q_j |\dot{\mathfrak{h}}_j(s)|\d s
								\\&\nonumber \leq C(L) \sqrt{\e}\lambda(\e)
								\sum_{j\in\N}\int_0^t(t-s)^{\frac{1}{2p}-\frac{1}{2}}\|\mathrm{Z}_{\e,\mathfrak{h}^\e}(s)\|_{\L^2}\|\varphi_j\|_{\L^2}q_j |\dot{\mathfrak{h}}_j(s)|\d s	\nonumber
								\\&\nonumber \leq 
								C(L) \sqrt{\e}\lambda(\e)\bigg(\sum_{j\in\N}\int_0^t\dot{\mathfrak{h}}_j^2(s)\d s\bigg)^{\frac{1}{2}}\bigg(	\sum_{j\in\N}\int_0^t(t-s)^{\frac{1}{p}-1}\|\mathrm{Z}_{\e,\mathfrak{h}^\e}(s)\|_{\L^2}^2\|\varphi_j\|_{\L^2}^2q_j^2\d s\bigg)^{\frac{1}{2}}
								\\&\nonumber\leq C(L)\sqrt{\e}\lambda(\e)\bigg(\int_0^t\|\dot{\mathfrak{h}}(s)\|_{\ell^2}^2\d s\bigg)^{1/2}
								\bigg(\sum_{j\in\N}q_j^2\bigg)^{\frac{1}{2}}\left(\int_0^t(t-s)^{\frac{1}{p}-1}\|\mathrm{Z}_{\e,\mathfrak{h}^\e}(s)\|_{\L^2}^2\d s\right)^{1/2}\nonumber\\&
								\nonumber\leq C(L)\sqrt{\e}\lambda(\e)\|\mathfrak{h}\|_{\mathcal{H}}
								\left(\int_0^t(t-s)^{\frac{1}{p}-1}\|\mathrm{Z}_{\e,\mathfrak{h}^\e}(s)\|_{\L^2}^2\d s\right)^{1/2}\\&\leq \nonumber
								C(L)\sqrt{\e}\lambda(\e)\|\mathfrak{h}\|_{\mathcal{H}}
								\left\{\sup_{s\in[0,t]}\|\mathrm{Z}_{\e,\mathfrak{h}^\e}(s)\|_{\L^2}\bigg(\int_0^t(t-s)^{\frac{1}{p}-1}\d s\bigg)^{1/2}\right\}\\&\leq 
								C(L)\sqrt{\e}\lambda(\e)T^{\frac{1}{2p}}\|\mathfrak{h}\|_{\mathcal{H}}
								\sup_{s\in[0,t]}\|\mathrm{Z}_{\e,\mathfrak{h}^\e}(s)\|_{\L^2},
							\end{align}for $p>\max\{6,2\delta+1\}$, where we have used the fact that $\sum\limits_{j\in\N}q_j^2 <\infty$ and $\{\varphi_j\}_{j\in\N}$ is an orthonormal family.
							
							Taking  superemum from $0$ to $T\wedge \tau^{\rho,\e}$ and then taking the expectation, we find
							\begin{align*}
								\E \bigg[\sup_{t\in [0,T\wedge \tau^{\rho,\e}]}\|J_{42}^{\e}(t)\|_{\L^p}\bigg]
								&	
								\leq 	C(L,T)\sqrt{\e}\lambda(\e)\E\bigg[\|\mathfrak{h}\|_{\mathcal{H}}
								\sup_{s\in[0,T\wedge \tau^{\rho,\e}]}\|\mathrm{Z}_{\e,\mathfrak{h}^\e}(s)\|_{\L^2}\bigg]
								\\ \nonumber &	
								\leq \sqrt{\e}\lambda(\e) C(L, T, \rho),
							\end{align*}where we have used the fact that $\mathfrak{h}\in \mathscr{P}^2_N$ and Proposition \ref{prop4.6}.
							
							Substituting the estimates of $J_{41}^\e$ and $J_{42}^{\e}$ in \eqref{4.26}, we deduce 
							\begin{align}
								\lim_{\e\to 0}\E\bigg[\sup_{t\in [0,T\wedge \tau^{\rho,\e}]}\|J_{4}^{\e}(t)\|_{\L^p}\bigg]=0.
							\end{align}
							Combining all the estimates of $J_{1}^\e$ to $J_{4}^\e$ in \eqref{4.14} produce 
							\begin{align}
								\lim_{\e\to 0}\E\bigg[\sup_{t\in [0,T\wedge \tau^{\rho,\e}]}\|\mathrm{Z}_{\e,\mathfrak{h}^\e}(t)-\mathrm{Z}_\mathfrak{h}(t )\|_{\L^p}\bigg]=0,
							\end{align}and hence \eqref{4p37} follows, which completes the proof.
							%
						\end{proof}
						
						\begin{proof}[Proof of the Theorem \ref{MDPthm}]
							We have established the verification of Condition \ref{Cond2} (1) and (2) in Lemmas \ref{compactness} and \ref{WC}, respectively. Therefore, combining Theorem \ref{thrmLDP} and its equivalency with LDP implies the required result, that is, LDP holds for the family $\{\mathrm{Z}_\e\}_{\e>0}$ in the space $\C([0,T];\L^p([0,1]))$, for $p>\max\{6,2\delta+1\}$.
						\end{proof}

						\appendix
						\renewcommand{\thesection}{\Alph{section}}
						\numberwithin{equation}{section}
						\section{Some Useful Results}\label{SUR}
						In the Appendix, we recall some useful results which are repeatedly used in the paper. 
						
						\vspace{2mm}
						\noindent
						\textbf{Green's function estimates:} The following estimates have been frequently used in this work (cf. \cite{IG,IGDN,AKMTM3})
						\begin{align}\label{A1}
							|G(t,x,y)| &\leq Ct^{-\frac{1}{2}}e^{-\frac{|x-y|^2}{a_1t}},\\
							\label{A2}
							\bigg|\frac{\partial G}{\partial y}(t,x,y)\bigg| &\leq Ct^{-1}e^{-\frac{|x-y|^2}{a_2t}},\\
							\label{A3}
							\bigg|\frac{\partial G}{\partial t}(t,x,y)\bigg| &\leq Ct^{-\frac{3}{2}}e^{-\frac{|x-y|^2}{a_3t}},\\
							\label{A4}
							\bigg|\frac{\partial^2 G}{\partial y\partial t}(t,x,y)\bigg| &\leq Ct^{-2}e^{-\frac{|x-y|^2}{a_4t}},\\
							\label{A5}
							|G(t,x,z)-G(t,y,z)| &\leq C|x-y|^{\vartheta}t^{-\frac{\vartheta}{2}-\frac{1}{2}}\max\bigg\{e^{-\frac{|x-z|^2}{a_5t}},e^{-\frac{|y-z|^2}{a_5t}}\bigg\},\\
							\label{A6}
							\bigg|\frac{\partial G}{\partial z}(t,x,z)-\frac{\partial G}{\partial z}(t,y,z)\bigg| &\leq C|x-y|^{\vartheta}t^{-1-\frac{\vartheta}{2}}\max\bigg\{e^{-\frac{|x-y|^2}{a_6t}},e^{-\frac{|x-z|^2}{a_6t}}\bigg\},
						\end{align}for all $t\in(0,T], x,y,z\in[0,1], C,a_i$ for $i=1,\ldots,6$, are some positive constants and $\vartheta\in[0,1]$.
						The following estimate (see \cite{IG}) is also  used frequently. 
						\begin{align}\label{A7}
							\big\|e^{-\frac{|\cdot|^2}{a(t-s)}}\big\|_{\L^p} \leq C (t-s)^{\frac{1}{2p}},
						\end{align} for any positive constant $a$ and $p\geq 1$.
						\appendix
						
						\medskip\noindent
						\textbf{Acknowledgments:}  The first author would like to thank  the  Department of Atomic Energy (DAE), Government of India for assisting through NBHM post-doctoral fellowship (File No.: 0204/6/2022/R\&D-II/5635). 
						This work was completed while the second author was a PhD student at Department of Mathematics, IIT Roorkee.
						M. T. Mohan would  like to thank the Department of Science and Technology (DST) Science $\&$ Engineering Research Board (SERB), India for a MATRICS grant (MTR/2021/000066).

						%
						%
						%
						%
						%
						%
						%
						%

					\end{document}